\documentclass[11pt,leqno,a4paper]{amsart}
\usepackage{amssymb,amscd,amsmath}
\usepackage{pspicture}
\usepackage{graphicx}

\usepackage{mathptmx}

\usepackage[matrix,arrow,curve,frame]{xy}    

\xymatrixcolsep{1.9pc}                          
\xymatrixrowsep{1.9pc}
\newdir{ >}{{}*!/-5pt/\dir{>}}                  


\addtolength{\textwidth}{2.67cm} \calclayout

\makeatletter

\renewcommand{\subsection}{\@startsection{subsection}{1}{0pt}{-3.25ex plus -1ex minus-.2ex}{1.5ex plus.2ex}{\normalfont\it}}
\renewcommand{\section}{\@startsection{section}{1}{\parindent}{3.5ex plus 1ex minus .2ex}{2.3ex plus.2ex}{\sc}}

\renewcommand{\phi}{\varphi}
\renewcommand{\leq}{\leqslant}
\renewcommand{\geq}{\geqslant}
\renewcommand{\epsilon}{\varepsilon}

\renewcommand{\kappa}{\varkappa}

\DeclareMathOperator{\spec}{Spec}
 \DeclareMathOperator{\cyl}{cyl}

 \DeclareMathOperator{\mot}{mot}

 \DeclareMathOperator{\cone}{cone}
\DeclareMathOperator{\Hom}{Hom} 
 
 \DeclareMathOperator{\id}{id}

 \DeclareMathOperator{\Mor}{Mor}
 \DeclareMathOperator{\colim}{colim}

 \DeclareMathOperator{\kr}{Ker}
 \DeclareMathOperator{\im}{Im}
 \DeclareMathOperator{\nis}{nis}

 \DeclareMathOperator{\Mod}{Mod}
 \DeclareMathOperator{\Ob}{Ob}
\DeclareMathOperator{\supp}{Supp}

\newcommand {\lp}{\colim}

\newcommand{\lra}[1]{\bl{#1}\longrightarrow\relax}
\newcommand{\bl}[1]{\buildrel #1\over}
\newcommand{\cc}{\mathcal}
\newcommand{\bb}{\mathbb}

\newcommand{\ff}{\mathfrak}

\newcommand{\op}{{\textrm{\rm op}}}

\newcommand{\wt}{\widetilde}

\newcommand{\sheff}{SH^{\mot}_{S^1}}
\newcommand{\shnis}{SH^{\nis}_{S^1}}

\newtheorem{thm}{Theorem}[section]
\newtheorem{prop}[thm]{Proposition}

\newtheorem{cor}[thm]{Corollary}
\newtheorem{lem}[thm]{Lemma}

\newtheorem{rem}[thm]{Remark}

\newtheorem{defs}[thm]{Definition}
\newtheorem{notn}[thm]{Notation}

\makeatother

\begin{document}

\footskip30pt


\title{The triangulated category of K-motives $DK_{-}^{\textrm{\rm eff}}(k)$}
\author{Grigory Garkusha}
\address{Department of Mathematics, Swansea University, Singleton Park, Swansea SA2 8PP, United Kingdom}
\email{g.garkusha@swansea.ac.uk}

\author{Ivan Panin}
\address{St. Petersburg Branch of V. A. Steklov Mathematical Institute,
Fontanka 27, 191023 St. Petersburg, Russia}

\address{St. Petersburg State University, Department of Mathematics and Mechanics, Universitetsky prospekt, 28, 198504,
Peterhof, St. Petersburg, Russia}
\email{paniniv@gmail.com}

\thanks{Supported by EPSRC grant EP/J013064/1.}

\keywords{Motivic homotopy theory, algebraic $K$-theory, spectral
categories}

\subjclass[2010]{14F42, 19E08, 55U35}

\begin{abstract}
For any perfect field $k$ a triangulated category of $K$-motives
$DK_{-}^{\text{eff}}(k)$ is constructed in the style of Voevodsky's
construction of the category $DM_-^{\text{eff}}(k)$. To each smooth
$k$-variety $X$ the $K$-motive $M_{\bb K}(X)$ is associated in the
category $DK_{-}^{\text{eff}}(k)$ and
   $$K_n(X)=\Hom_{DK_{-}^{\text{eff}}(k)}(M_{\bb K}(X)[n],M_{\bb K}(pt)),\quad n\in\bb Z,$$
where $pt=\spec(k)$ and $K(X)$ is Quillen's $K$-theory of $X$.
\end{abstract}
\maketitle

\thispagestyle{empty} \pagestyle{plain}

\newdir{ >}{{}*!/-6pt/@{>}} 


\section{Introduction}

The Voevodsky triangulated category of motives
$DM_-^{\text{eff}}(k)$~\cite{Voe1} provides a natural framework to
study motivic cohomology. In~\cite{GP} the authors constructed a
triangulated category of $K$-motives providing a natural framework
for Grayson's motivic spectral sequence~\cite{Gr}
   $$E_2^{pq}=H_{\cc M}^{p-q,-q}(X,\bb Z)\Longrightarrow K_{-p-q}(X)$$
that relates the motivic cohomology groups of a smooth variety $X$
to its algebraic $K$-groups. The main idea was to use a kind of
motivic algebra of spectral categories and modules over them.

In this paper an alternative approach to constructing a triangulated
category of $K$-motives is presented. We work in the framework of
strict $V$-spectral categories introduced in the paper
(Definition~\ref{vsp}). The main feature of such a spectral category
$\cc O$ is that it is connective and Nisnevich excisive in the sense
of~\cite{GP}, and $\pi_0\cc O$-(pre)sheaves, where $\pi_0\cc O$ is a
ringoid associated to $\cc O$, share lots of common properties with
(pre)sheaves with transfers (or $Cor$-(pre)sheaves) in the sense of
Voevodsky~\cite{Voe}.

To any strict $V$-spectral category over $k$-smooth varieties we
associate a triangulated category $D\cc O_-^{\text{eff}}(k)$, which
in spirit is constructed similarly to $DM_-^{\text{eff}}(k)$
(Section~\ref{dominus}). For instance, the ringoid of
correspondences $Cor$ gives rise to a strict $V$-spectral category
$\cc O=\cc O_{cor}$ whenever the base field $k$ is perfect. In this
case the Voevodsky category $DM_-^{\text{eff}}(k)$ is recovered as
the category $D\cc O_-^{\text{eff}}(k)$ (Corollary~\ref{kongo}).

The main $V$-spectral category $\bb K$ is constructed in
Section~\ref{CategoriesA} (see Theorem~\ref{semga}). It is strict
over perfect fields. The associated triangulated category $D\cc
O_-^{\text{eff}}(k)$ is denoted by $DK_-^{\text{eff}}(k)$. The
spectral category $\bb K$ is a priori different from spectral
categories constructed by the authors in~\cite{GP}. But we expect
that associated motivic model categories of modules over the
spectral categories are equivalent.

To each smooth $k$-variety $X$ we associate its $K$-motive $M_{\bb
K}(X)$. By definition, it is an object of the category
$DK_{-}^{\text{eff}}(k)$. We prove in Theorem~\ref{vesmaneploho}
that
   $$K_n(X)=\Hom_{DK_{-}^{\text{eff}}(k)}(M_{\bb K}(X)[n],M_{\bb K}(pt)),\quad n\in\bb Z,$$
where $pt=\spec(k)$ and $K(X)$ is Quillen's $K$-theory of $X$. Thus
Quillen's $K$-theory is represented by the $K$-motive of the point.

The spectral category $\bb K$ is of great utility in authors'
paper~\cite{GP1}, in which they solve some problems related to the
motivic spectral sequence. In fact, the problems were the main
motivation for constructing the spectral category $\bb K$ and
developing the machinery of $K$-motives.

Throughout the paper we denote by $\text{Sm}/k$ the category of
smooth separated schemes of finite type over the base field $k$.

\section{Preliminaries}

We work in the framework of spectral categories and modules over
them in the sense of Schwede--Shipley~\cite{SS}. We start with
preparations.

Recall that symmetric spectra have two sorts of homotopy groups
which we shall refer to as {\it naive\/} and {\it true homotopy
groups\/} respectively following terminology of~\cite{Sch}.
Precisely, the $k$th naive homotopy group of a symmetric spectrum
$X$ is defined as the colimit
   $$\hat\pi_k(X)=\colim_n\pi_{k+n}X_n.$$
Denote by $\gamma X$ a stably fibrant model of $X$ in $Sp^\Sigma$.
The $k$-th true homotopy group of $X$ is given by
   $$\pi_kX=\hat\pi_k(\gamma X),$$
the naive homotopy groups of the symmetric spectrum $\gamma X$.

Naive and true homotopy groups of $X$ can be considerably different
in general (see, e.g.,~\cite{HSS,Sch}). The true homotopy groups
detect stable equivalences, and are thus more important than the
naive homotopy groups. There is an important class of {\it
semistable\/} symmetric spectra within which
$\hat\pi_*$-isomorphisms coincide with $\pi_*$-isomorphisms. Recall
that a symmetric spectrum is semistable if some (hence any) stably
fibrant replacement is a $\pi_*$-isomorphism. Suspension spectra,
Eilenberg--Mac Lane spectra, $\Omega$-spectra or $\Omega$-spectra
from some point $X_n$ on are examples of semistable symmetric
spectra (see~\cite{Sch}). So Waldhausen's algebraic $K$-theory
symmetric spectrum, which we shall use later, is semistable.
Semistability is preserved under suspension, loop, wedges and shift.

A symmetric spectrum $X$ is {\it $n$-connected\/} if the true
homotopy groups of $X$ are trivial for $k\leq n$. The spectrum $X$
is {\it connective\/} if it is $(-1)$-connected, i.e., its true
homotopy groups vanish in negative dimensions. $X$ is {\it bounded
below\/} if $\pi_i(X)=0$ for $i\ll 0$.

\begin{defs}\label{spectral}{\rm
(1) Following~\cite{SS} a {\it spectral category\/} is a category
$\cc O$ which is enriched over the category $Sp^\Sigma$ of symmetric
spectra (with respect to smash product, i.e., the monoidal closed
structure of \cite[2.2.10]{HSS}). In other words, for every pair of
objects $o,o'\in\cc O$ there is a morphism symmetric spectrum $\cc
O(o,o')$, for every object $o$ of $\cc O$ there is a map from the
sphere spectrum $S$ to $\cc O(o,o)$ (the ``identity element" of
$o$), and for each triple of objects there is an associative and
unital composition map of symmetric spectra $\cc O(o',o'')\wedge\cc
O(o,o') \to\cc O(o,o'')$. An $\cc O$-module $M$ is a contravariant
spectral functor to the category $Sp^\Sigma$ of symmetric spectra,
i.e., a symmetric spectrum $M(o)$ for each object of $\cc O$
together with coherently associative and unital maps of symmetric
spectra $M(o)\wedge\cc O(o',o)\to M(o')$ for pairs of objects
$o,o'\in\cc O$. A morphism of $\cc O$-modules $M\to N$ consists of
maps of symmetric spectra $M(o)\to N(o)$ strictly compatible with
the action of $\cc O$. The category of $\cc O$-modules will be
denoted by $\Mod\cc O$.

(2) A {\it spectral functor\/} or a {\it spectral homomorphism\/}
$F$ from a spectral category $\cc O$ to a spectral category $\cc O'$
is an assignment from $\Ob\cc O$ to $\Ob\cc O'$ together with
morphisms $\cc O(a,b)\to\cc O'(F(a),F(b))$ in $Sp^\Sigma$ which
preserve composition and identities.

(3) The {\it monoidal product\/} $\cc O\wedge\cc O'$ of two spectral
categories $\cc O$ and $\cc O'$ is the spectral category where
$\Ob(\cc O\wedge\cc O'):=\Ob\cc O\times\Ob\cc O'$ and $\cc
O\wedge\cc O'((a,x),(b,y)):= \cc O(a,b)\wedge\cc O'(x,y)$.


(4) A spectral category $\cc O$ is said to be {\it connective\/} if
for any objects $a,b$ of $\cc O$ the spectrum $\cc O(a,b)$ is
connective.

(5) By a {\it ringoid\/} over $\text{Sm}/k$ we mean a preadditive
category $\cc R$ (i.e., a category enriched over abelian groups)
whose objects are those of $\text{Sm}/k$ together with a functor
   $$\rho:\text{Sm}/k\to\cc R,$$
which is identity on objects. Every such ringoid gives rise to a
spectral category $\cc O_{\cc R}$ whose objects are those of
$\text{Sm}/k$ and the morphisms spectrum $\cc O_{\cc R}(X,Y)$,
$X,Y\in \text{Sm}/k$, is the Eilenberg--Mac~Lane spectrum $H\cc
R(X,Y)$ associated with the abelian group $\cc R(X,Y)$. Given a map
of schemes $\alpha$, its image $\rho(\alpha)$ will also be denoted
by $\alpha$, dropping $\rho$ from notation.

(6) Let $\cc O_{naive}$ be the spectral category whose objects are
those of $\text{Sm}/k$ and morphism spectra are defined as
   $$\cc O_{naive}(X,Y)_p=\Hom_{\text{Sm}/k}(X,Y)_+\wedge S^p$$
for all $p\geq 0$ and $X,Y\in \text{Sm}/k$. By a {\it spectral
category over\/} $\text{Sm}/k$ we mean a pair $(\cc O,\sigma)$,
where $\cc O$ is a spectral category whose objects are those of
$\text{Sm}/k$ and
   $$\sigma:\cc O_{naive}\to\cc O$$
is a spectral functor which is identity on objects. If there is no
likelihood of confusion, we shall drop $\sigma$ from notation.

}\end{defs}

\begin{rem}{\rm
It is straightforward to verify that the category of $\cc
O_{naive}$-modules can be regarded as the category of presheaves
$Pre^\Sigma(\text{Sm}/k)$ of symmetric spectra on $\text{Sm}/k$.
This is used in the sequel without further comment.

}\end{rem}

Let $\cc O$ be a spectral category and let $\Mod\cc O$ be the
category of $\cc O$-modules. Recall that the projective stable model
structure on $\Mod\cc O$ is defined as follows (see~\cite{SS}). The
weak equivalences are the objectwise stable weak equivalences and
fibrations are the objectwise stable projective fibrations. The
stable projective cofibrations are defined by the left lifting
property with respect to all stable projective acyclic fibrations.

Let $\cc Q$ denote the set of elementary distinguished squares in
$\text{Sm}/k$ (see~\cite[3.1.3]{MV})
   \begin{equation*}\label{squareQ}
    \xymatrix{\ar@{}[dr] |{\textrm{$Q$}}U'\ar[r]\ar[d]&X'\ar[d]^\phi\\
              U\ar[r]_\psi&X}
   \end{equation*}
and let $\cc O$ be a spectral category over $\text{Sm}/k$ in the
sense of Definition~\ref{spectral}(6). By $\cc Q_{\cc O}$ denote the
set of squares
   \begin{equation*}\label{squareOQ}
    \xymatrix{\ar@{}[dr] |{\textrm{$\cc O Q$}}\cc O(-,U')\ar[r]\ar[d]&\cc O(-,X')\ar[d]^\phi\\
              \cc O(-,U)\ar[r]_\psi&\cc O(-,X)}
   \end{equation*}
which are obtained from the squares in $\cc Q$ by taking $X\in
\text{Sm}/k$ to $\cc O(-,X)$. The arrow $\cc O(-,U')\to\cc O(-,X')$
can be factored as a cofibration $\cc O(-,U')\rightarrowtail Cyl$
followed by a simplicial homotopy equivalence $Cyl\to\cc O(-,X')$.
There is a canonical morphism $A_{\cc O Q}:=\cc O(-,U)\bigsqcup_{\cc
O(-,U')} Cyl\to\cc O(-,X)$.

\begin{defs}[see~\cite{GP}]{\rm
I. The {\it Nisnevich local model structure\/} on $\Mod\cc O$ is the
Bousfield localization of the stable projective model structure with
respect to the family of projective cofibrations
   \begin{equation*}\label{no}
    \cc N_{\cc O}=\{\cyl(A_{\cc O Q}\to\cc O(-,X))\}_{\cc Q_{\cc O}}.
   \end{equation*}
The homotopy category for the Nisnevich local model structure will
be denoted by $\shnis\cc O$. In particular, if $\cc O=\cc O_{naive}$
then we have the Nisnevich local model structure on
$Pre^\Sigma(\text{Sm}/k)=\Mod\cc O_{naive}$ and we shall write
$\shnis(k)$ to denote $\shnis\cc O_{naive}$.

II. The {\it motivic model structure\/} on $\Mod\cc O$ is the
Bousfield localization of the Nisnevich local model structure with
respect to the family of projective cofibrations
   \begin{equation*}\label{ao}
    \cc A_{\cc O}=\{\cyl(\cc O(-,X\times\bb A^1)\to\cc O(-,X))\}_{X\in \text{Sm}/k}.
   \end{equation*}
The homotopy category for the motivic model structure will be
denoted by $\sheff\cc O$. In particular, if $\cc O=\cc O_{naive}$
then we have the motivic model structure on
$Pre^\Sigma(\text{Sm}/k)=\Mod\cc O_{naive}$ and we shall write write
$\sheff(k)$ to denote $\sheff\cc O_{naive}$.

}\end{defs}

We refer the reader to~\cite[Definition 5.7]{GP} for the notions of
{\it Nisnevich excisive\/} and {\it motivically excisive\/} spectral
categories. These basically mean that $\mathcal{O}$ converts
elementary distinguished squares to homotopy pushouts with respect
to the appropriate model structure.






Let $\text{AffSm}/k$ be the full subcategory of $\text{Sm}/k$ whose
objects are the smooth affine varieties. $\text{AffSm}/k$ gives rise
to a spectral category $\cc O_{\text{Aff}}$ whose objects are those
of $\text{AffSm}/k$ and morphisms spectra are defined as
   $$\cc O_{\text{Aff}}(X,Y):=\Hom_{\text{AffSm}/k}(X,Y)_+\wedge\bb S,$$
where $\bb S$ is the sphere spectrum and $X,Y\in \text{AffSm}/k$.

Recall that a sheaf $\cc F$ of abelian groups in the Nisnevich
topology on $\text{Sm}/k$ is {\it strictly $\bb A^1$-invariant\/} if
for any $X\in \text{Sm}/k$, the canonical morphism
   $$H^*_{\nis}(X,\cc F)\to H^*_{\nis}(X\times\bb A^1,\cc F)$$
is an isomorphism.

\begin{defs}{\rm
Let $\cc R$ be a ringoid  over $\text{Sm}/k$ together with the
structure functor $\rho:\text{Sm}/k\to\cc R$. We say that $\cc R$ is
a {\it $V$-ringoid\/} (``$V$" for Voevodsky) if

\begin{enumerate}
\item for any elementary distinguished square $Q$ the sequence of Nisnevich
sheaves associated to representable presheaves
   $$0\to\cc R_{\nis}(-,U')\to\cc R_{\nis}(-,U)\oplus\cc R_{\nis}(-,X')\to\cc R_{\nis}(-,X)\to 0$$
is exact;
\item there is a functor
   $$\boxtimes:\cc R\times \text{AffSm}/k\to\cc R$$
sending $(X,U)\in \text{Sm}/k\times \text{AffSm}/k$ to $X\times U\in
Sm/k$ and such that $1_X\boxtimes\alpha=\rho(1_X\times\alpha)$,
$(u+v)\boxtimes\alpha=u\boxtimes\alpha+v\boxtimes\alpha$ for all
$\alpha\in\Mor(\text{AffSm}/k)$ and $u,v\in\Mor(\cc R)$.
\item for any $\cc R$-presheaf of abelian groups $\cc F$, i.e. $\cc F$ is a contravariant
functor from $\cc R$ to abelian groups, the associated Nisnevich
sheaf $\cc F_{\nis}$ has a unique structure of a $\cc R$-presheaf
for which the canonical homomorphism $\cc F\to\cc F_{\nis}$ is a
homomorphism of $\cc R$-presheaves. Moreover, if $\cc F$ is homotopy
invariant then so is $\cc F_{\nis}$;
\end{enumerate}
We refer to $\cc R$ as a {\it strict $\bb A^1$-invariant
$V$-ringoid\/} if every $\bb A^1$-invariant Nisnevich $\cc R$-sheaf
is strictly $\bb A^1$-invariant.

}\end{defs}

We want to make several remarks regarding the definition. Condition
(1) implies the spectral category $\cc O_{\cc R}$ associated to the
ringoid $\cc R$ is Nisnevich excisive. Condition (2) implies that
for any $\cc R$-presheaf $\cc F$ and any affine scheme $U\in
\text{AffSm}/k$ the presheaf
   $$\underline\Hom(U,\cc F):=\cc F(-\times U)$$
is an $\cc R$-presheaf. Moreover, it is functorial in $U$.


\begin{defs}\label{vsp}{\rm
Let $(\cc O,\sigma)$ be a spectral category over $\text{Sm}/k$ in
the sense of Definition~\ref{spectral}(6). We say that $\cc O$ is a
{\it $V$-spectral category\/} if

\begin{enumerate}
\item $\cc O$ is connective and Nisnevich excisive;
\item there is a spectral functor
   $$\boxdot:\cc O\wedge\cc O_{\text{Aff}}\to\cc O$$
sending $(X,U)\in \text{Sm}/k\times \text{AffSm}/k$ to $X\times U\in
\text{Sm}/k$ and such that
$1_X\boxdot\alpha=\sigma(1_X\times\alpha)$ for all
$\alpha\in\Mor(\text{AffSm}/k)$;
\item $\pi_0\cc O$ is a $V$-ringoid such that the structure map $\rho:\text{Sm}/k\to\pi_0\cc O$
equals the composite map
   $$\text{Sm}/k\to\pi_0\cc O_{naive}\xrightarrow{\pi_0(\sigma)}\pi_0\cc O.$$
We also require the structure pairing $\boxtimes:\pi_0\cc O\times
\text{AffSm}/k\to\pi_0\cc O$ to be the composite functor
   $$\pi_0\cc O\times \text{AffSm}/k\xrightarrow{}\pi_0\cc O\times\pi_0\cc O_{\text{Aff}}\to\pi_0(\cc O\wedge\cc O_{\text{Aff}})
     \xrightarrow{\pi_0(\boxdot)}\pi_0\cc O.$$
\end{enumerate}
We refer to $\cc O$ as a {\it strict $V$-spectral category\/} if
$\pi_0\cc O$ is a strict $\bb A^1$-invariant $V$-ringoid.

}\end{defs}

Since the main category $D\cc O_-^{\text{eff}}(k)$ we shall work
with consists of bounded below $\cc O$-modules (see
section~\ref{dominus} for precise definitions), we assume $\cc O$ to
be connective in Definition~\ref{vsp}.

We note that if $\cc O$ is a $V$-spectral category, then for every
$\cc O$-module $M$ and any affine smooth scheme $U$, the presheaf of
symmetric spectra
   $$\underline{\Hom}(U,M):=M(-\times U)$$
is an $\cc O$-module. Moreover, $M(-\times U)$ is functorial in $U$.

\begin{lem}\label{pepe}
Every $V$-spectral category $\cc O$ is motivically excisive.
\end{lem}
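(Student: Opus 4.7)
The plan is to verify the two defining conditions (A) and (B) of motivic excisiveness separately.

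Condition (A) is essentially formal. Since $\cc O$ is Nisnevich excisive by hypothesis, each square $\cc O Q$ is already a homotopy pushout in the Nisnevich local model structure on $Pre^\Sigma(Sm/k)$. The motivic model structure on $Pre^\Sigma(Sm/k)$ is a left Bousfield localization of the Nisnevich local one, so the identity functor from the Nisnevich local to the motivic structure is left Quillen. Left Quillen functors preserve homotopy pushouts of cofibrant objects, and the corners of $\cc O Q$ are representable and hence cofibrant. Thus each $\cc OQ$ remains homotopy cocartesian in the motivic structure.

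For condition (B), the strategy is to construct an explicit $\bb A^1$-homotopy using the $\boxdot$ structure. Consider the multiplication morphism $m\colon \bb A^1\times \bb A^1\to\bb A^1$, $(t,u)\mapsto tu$, in $AffSm/k$. By the compatibility $1_X\boxdot m = \sigma(1_X\times m)$, one obtains a morphism $H\in\cc O(X\times\bb A^1\times\bb A^1,\,X\times\bb A^1)$ such that $H\circ i_0 = s_0\circ p$ and $H\circ i_1 = 1_{X\times\bb A^1}$, where $p\colon X\times\bb A^1\to X$ is the projection, $s_0\colon X\to X\times\bb A^1$ is the zero section, and $i_0,i_1\colon X\times\bb A^1\to X\times\bb A^1\times\bb A^1$ are the two sections of the third coordinate at $0$ and $1$. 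This exhibits $s_0\circ p$ and $1_{X\times\bb A^1}$ as $\bb A^1$-homotopic morphisms in $\cc O$.

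Applying the representable spectral functor $W\mapsto\cc O(-,W)$ to this $\bb A^1$-homotopy yields a map of presheaves of symmetric spectra $H_*\colon\cc O(-,X\times\bb A^1\times\bb A^1)\to\cc O(-,X\times\bb A^1)$ with $H_*\circ(i_0)_*=(s_0)_*\circ p_*$ and $H_*\circ(i_1)_*=1$. To conclude that $p_*$ is a motivic equivalence of presheaves of symmetric spectra on $Sm/k$, I would argue that the two sections $(i_0)_*,(i_1)_*$ become identified in the motivic homotopy category. This holds by construction for the naive enrichment $\cc O_{naive}(-,-)$, since the corresponding projection is inverted in the motivic model structure on $Pre^\Sigma(Sm/k)$, and transfers to the $\cc O$-enriched version through the spectral functor $\sigma$ and the $\boxdot$-compatibility, upon reducing the motivic-equivalence test to the connective Postnikov tower of the $\pi_0\cc O$-sheaves $\pi_*\cc O(-,X)$ and invoking the $V$-ringoid axioms on $\pi_0\cc O$.

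The technical heart of the argument, and the main obstacle, is precisely this transfer: passing from the naive motivic identification of $(i_0)_*$ and $(i_1)_*$ to the $\cc O$-enriched one. The expected tools are the connectivity of $\cc O$, which permits a Postnikov filtration reducing the check to presheaves of abelian $\pi_0\cc O$-modules, together with the $V$-ringoid axioms—particularly Conditions (1) and (3)—that control how these presheaves behave under Nisnevich sheafification, while the $\boxtimes$-pairing supplies the $\bb A^1$-homotopy at the abelian level. Note that the strict $V$-ringoid hypothesis is not needed here; it would only be required in order to upgrade $\bb A^1$-invariance to strict $\bb A^1$-invariance.
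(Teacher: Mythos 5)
Your treatment of condition (A) is correct and is exactly what the paper intends: Nisnevich excisiveness plus the fact that the motivic structure is a left Bousfield localization of the Nisnevich local one.

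The argument for condition (B), however, has a genuine gap: your homotopy is parametrized on the wrong side, and the step needed to close it is circular. You place the homotopy parameter inside the \emph{covariant} variable, obtaining $H_*:\cc O(-,X\times\bb A^1\times\bb A^1)\to\cc O(-,X\times\bb A^1)$ with $H_*\circ(i_0)_*=(s_0)_*\circ p_*$ and $H_*\circ(i_1)_*=\id$. To conclude you must identify $(i_0)_*$ with $(i_1)_*$ in the motivic homotopy category; but both are sections of $q_*:\cc O(-,X\times\bb A^1\times\bb A^1)\to\cc O(-,X\times\bb A^1)$, and the only way to identify them is to know that $q_*$ is a motivic equivalence --- which is precisely an instance of condition (B), for $X$ replaced by $X\times\bb A^1$. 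The fact that the analogous projection is inverted for \emph{representable} presheaves in $Pre^\Sigma(Sm/k)$ does not transfer, since $\cc O(-,Y)$ is not representable in $\cc O_{naive}$-modules. Your proposed repair via Postnikov towers of homotopy presheaves also does not work as stated: a map of presheaves of spectra being a motivic equivalence is not detected on homotopy presheaves (that would require the strict $\bb A^1$-invariance machinery of Theorem~\ref{italy}, which itself depends on this lemma).

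The correct use of the $\boxdot$-structure --- and the point of the reference to~\cite[5.8]{GP} --- is to produce a homotopy parametrized on the \emph{contravariant} side: the assignment $\Phi\mapsto(1_X\boxdot m)\circ(\Phi\boxdot 1_{\bb A^1})$ defines a map of presheaves of symmetric spectra
$$\cc O(-,X\times\bb A^1)\longrightarrow\cc O(-\times\bb A^1,X\times\bb A^1)=\underline{\Hom}(\bb A^1,\cc O(-,X\times\bb A^1))$$
whose restrictions along the two evaluations $ev_0,ev_1$ are $(s_0)_*\circ p_*$ and the identity. This is an elementary $\bb A^1$-homotopy in $Pre^\Sigma(Sm/k)$, and for any $\bb A^1$-local $L$ the map $L\to\underline{\Hom}(\bb A^1,L)$ is a weak equivalence, so the two maps agree after mapping into any such $L$; hence $p_*$ is a motivic weak equivalence. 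Note that it is exactly the external product $\Phi\boxdot 1_{\bb A^1}$, applied to arbitrary points of the spectrum $\cc O(U,X\times\bb A^1)$ and not merely to morphisms coming from $\sigma$, that cannot be obtained from the naive structure; this is why axiom (2) of Definition~\ref{vsp} is needed, while neither connectivity nor the $V$-ringoid axioms on $\pi_0\cc O$ enter into condition (B). You are right that strictness is not needed.
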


\begin{proof}
Every $V$-spectral category is, by definition, Nisnevich excisive.
Since there is an action of affine smooth schemes on $\cc O$, the
fact that $\cc O$ is motivically excisive is proved similar
to~\cite[5.8]{GP}.
\end{proof}

Let $\cc O$ be a $V$-spectral category. Since it is both Nisnevich
and motivically excisive, it follows from~\cite[5.13]{GP} that the
pair of natural adjoint fuctors
   $$\xymatrix{{\Psi_*}:Pre^\Sigma(\text{Sm}/k)\ar@<0.5ex>[r]&\Mod\cc O:{\Psi^*}\ar@<0.5ex>[l]}$$
induces a Quillen pair for the Nisnevich local projective
(respectively motivic) model structures on $Pre^\Sigma(\text{Sm}/k)$
and $\Mod\cc O$. In particular, one has adjoint functors between
triangulated categories
   \begin{equation}\label{adjoint}
    {\Psi_*}:\shnis(k)\leftrightarrows\shnis\cc O:{\Psi^*}\quad\textrm{ and }\quad {\Psi_*}:\sheff(k)\leftrightarrows\sheff\cc O:{\Psi^*}.
   \end{equation}

\section{The triangulated category $D\cc O_-^{\text{\rm eff}}(k)$}\label{dominus}

Throughout this section we work with a strict $V$-spectral category
$\cc O$. We shall often work with simplicial $\cc O$-modules
$M[\bullet]$. The {\it realization\/} of $M[\bullet]$ is the $\cc
O$-module $|M|$ defined as the coend
   $$|M|=\Delta[\bullet]_+\wedge_{\Delta} M[\bullet]$$
of the functor $\Delta[\bullet]_+\wedge
M[\bullet]:\Delta\times\Delta^{\op}\to\Mod\cc O$. Here $\Delta[n]$
is the standard simplicial $n$-simplex.

Recall that the simplicial ring $k[\Delta]$ is defined as
   $$k[\Delta]_n=k[x_0,\ldots,x_n]/(x_0+\cdots+x_n-1).$$
By $\Delta^{\cdot}$ we denote the cosimplicial affine scheme
$\spec(k[\Delta])$. Let
   $$M\in\Mod\cc O\mapsto M_f\in\Mod\cc O$$
be a fibrant replacement functor in the Nisnevich local model
structure on $\Mod\cc O$. Given an $\cc O$-module $M$, we set
   $$C_*(M):=|\underline{\Hom}(\Delta^{\cdot},M_f)|.$$
Note that $C_*(M)$ is an $\cc O$-module and is functorial in $M$. If
we regard $M_f$ as a constant simplicial $\cc O$-module, the map of
cosimplicial schemes $\Delta^\cdot\to pt$ induces a map of $\cc
O$-modules
   $$M\to C_*(M).$$

\begin{lem}\label{spain}
The functor $C_*$ respects Nisnevich local weak equivalences. In
particular, it induces a triangulated endofunctor
   $$C_*:\shnis\cc O\to\shnis\cc O.$$
\end{lem}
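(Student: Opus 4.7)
The plan is to factor $C_*$ through its three constituent operations---Nisnevich-local fibrant replacement $(-)_f$, the cosimplicial internal Hom $\underline{\Hom}(\Delta^\cdot,-)$, and geometric realization $|-|$---and verify that each preserves Nisnevich local weak equivalences.

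First, given a Nisnevich local weak equivalence $f:M\to N$, I would argue that $M_f\to N_f$ is sectionwise a stable equivalence of symmetric spectra. Since $(-)_f$ is a functorial Nisnevich-local fibrant replacement, the maps $M\to M_f$ and $N\to N_f$ are trivial cofibrations, so two-out-of-three applied to the composite $M\to N\to N_f$ together with $M\to M_f\to N_f$ forces $M_f\to N_f$ to be a Nisnevich local weak equivalence between fibrant objects. Because the Nisnevich local model structure is a left Bousfield localization of the projective stable model structure, weak equivalences between localized-fibrant objects coincide with the original projective stable ones, which are by definition sectionwise stable equivalences.

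Next, by the remark following Definition~\ref{vsp}, the presheaf $\underline{\Hom}(\Delta^n, M_f)=M_f(-\times\Delta^n)$ is an $\cc O$-module for every $n\geq 0$, since $\Delta^n$ is smooth affine and $\cc O$ is a $V$-spectral category. Evaluated at $X\in Sm/k$ it reads $M_f(X\times\Delta^n)$, so Step~1 implies that in each cosimplicial degree the induced map $\underline{\Hom}(\Delta^n, M_f)\to\underline{\Hom}(\Delta^n, N_f)$ is sectionwise a stable equivalence of $\cc O$-modules, hence in particular a Nisnevich local weak equivalence.

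Finally, I would invoke that the realization of a simplicial $\cc O$-module preserves levelwise Nisnevich local weak equivalences, via the Bousfield--Kan spectral sequence computing the true homotopy groups of $|M[\bullet]|$ from the homotopy groups of its terms, using Reedy cofibrancy of $\Delta[\bullet]_+$. Combining the three steps, $C_*$ preserves Nisnevich local weak equivalences and descends to an endofunctor of $\shnis\cc O$. Triangulatedness is then automatic: each constituent commutes with $\Sigma=-\wedge S^1$ up to natural isomorphism and carries cofibre sequences to cofibre sequences, so distinguished triangles in $\shnis\cc O$ map to distinguished triangles. I expect the main obstacle to be this last step---the compatibility of realization with the Nisnevich local structure---while the first two reductions are essentially model-categorical formalities.
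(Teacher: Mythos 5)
Your decomposition of $C_*$ into fibrant replacement, $\underline{\Hom}(\Delta^\cdot,-)$, and realization, and your first two steps, match the paper's proof. The soft spot is the final step, and the justification you sketch for it would not go through as written. The realization of a simplicial symmetric spectrum does not in general preserve degreewise stable equivalences: the skeletal-filtration (Bousfield--Kan) spectral sequence computes \emph{naive} homotopy groups, which do not detect stable equivalences of symmetric spectra outside the semistable case, and the Reedy cofibrancy you would actually need is that of the simplicial $\cc O$-module $\underline{\Hom}(\Delta^\bullet,M_f)$, not of $\Delta[\bullet]_+$; neither is available for free. Asserting that realization preserves degreewise \emph{Nisnevich local} weak equivalences is a still stronger claim and equally unsubstantiated.

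The repair is short, and it is exactly what the paper does: your Step 1 gives more than you use. Since $M_f$ and $N_f$ are Nisnevich local fibrant, they are in particular objectwise stably fibrant, i.e.\ sectionwise $\Omega$-spectra; a sectionwise stable equivalence between such objects is a sectionwise \emph{level} equivalence. Hence each map $\underline{\Hom}(\Delta^n,M_f)\to\underline{\Hom}(\Delta^n,N_f)$ is a level equivalence, and realization --- being, in each spectrum level and each section, the diagonal of a bisimplicial set --- preserves level equivalences with no cofibrancy or spectral-sequence input. With that substitution your argument coincides with the paper's; the paper reaches the same point by quoting~\cite[5.12]{GP} for the fact that the forgetful functor $\Psi^*$ preserves Nisnevich local weak equivalences and Nisnevich local fibrant objects, which is the precise form of your Bousfield-localization remark in Step 1.
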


\begin{proof}
Let $\alpha:L\to M$ be a Nisnevich local weak equivalence of $\cc
O$-modules. By~\cite[5.12]{GP} the forgetful functor $\Psi^*:\Mod\cc
O\to Pre^\Sigma(\text{Sm}/k)$ respects Nisnevich local weak
equivalences and Nisnevich local fibrant objects. It follows that
the fibrant replacement
   $$\alpha_f:L_f\to M_f$$
of $\alpha$ is a level equivalence of presheaves of ordinary
symmetric spectra, and hence so is each map
  $$\underline{\Hom}(\Delta^n,\alpha_f):\underline{\Hom}(\Delta^n,L_f)\to\underline{\Hom}(\Delta^n,M_f),\quad n\geq 0.$$
Since the realization functor respects level equivalences, our
assertion follows.
\end{proof}

One of advantages of strict $V$-spectral categories is that we can
construct an $\bb A^1$-local replacement of an $\cc O$-module $M$ in
two steps. We first take $C_*(M)$ and then its Nisnevich local
replacement $C_*(M)_f$.

\begin{thm}\label{italy}
The natural map $M\to C_*(M)_f$ is an $\bb A^1$-local replacement of
$M$ in the motivic model structure of $\cc O$-modules.
\end{thm}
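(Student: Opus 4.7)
To prove $M \to C_*(M)_f$ is an $\bb A^1$-local replacement, I would verify two things: (a) the map is a motivic weak equivalence of $\cc O$-modules, and (b) the target $C_*(M)_f$ is motivically fibrant. Since $(-)_f$ is Nisnevich fibrant replacement, $C_*(M)_f$ is already Nisnevich fibrant, so (b) reduces to showing it is $\bb A^1$-local.

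For (a), I would factor the composite as
$$M\longrightarrow M_f\longrightarrow C_*(M)\longrightarrow C_*(M)_f,$$
where the first and third arrows are Nisnevich local weak equivalences by construction, hence motivic weak equivalences. The middle arrow $M_f\to|\underline{\Hom}(\Delta^\cdot,M_f)|$ is induced by the augmentation $\Delta^\cdot\to pt$. At each simplicial level $n$ it is the map $M_f\to M_f(-\times\bb A^n)$, which is a motivic weak equivalence in $Pre^\Sigma(Sm/k)$ by iterating condition (B) of motivic excisiveness (available for $\cc O$ by Lemma~\ref{pepe}). A levelwise motivic equivalence between a simplicial $\cc O$-module and a constant one passes to realizations, giving (a).

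For (b), the strategy is to use the Nisnevich descent spectral sequence
$$E_2^{p,q}=H^p_{\nis}\bigl(X,a_{\nis}\hat\pi_{-q}C_*(M)\bigr)\Longrightarrow\pi_{-p-q}\bigl(C_*(M)_f(X)\bigr)$$
and show each Nisnevich sheaf $a_{\nis}\hat\pi_n C_*(M)$ is strictly $\bb A^1$-invariant, so that both sides are $\bb A^1$-invariant in $X$. First I would verify that the naive homotopy presheaves $\hat\pi_n C_*(M)$ are $\bb A^1$-invariant, via the Suslin--Voevodsky argument: the two sections $X\rightrightarrows X\times\bb A^1$ induce simplicially homotopic maps on $M_f(X\times\bb A^1\times\Delta^\cdot)$ through the cosimplicial structure of $\Delta^\cdot$, forcing $p_1^*$ to be an isomorphism on $\hat\pi_nC_*(M)$. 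Next, the pairing $\boxdot$ of axiom (2) of Definition~\ref{vsp} endows each presheaf $\hat\pi_nC_*(M)$ with a $\pi_0\cc O$-presheaf structure, and axiom (3) for the $V$-ringoid $\pi_0\cc O$ transports this through Nisnevich sheafification. Strictness of the $V$-spectral category $\cc O$ then promotes $\bb A^1$-invariance of the sheaves $a_{\nis}\hat\pi_nC_*(M)$ to strict $\bb A^1$-invariance, making every $E_2^{p,q}$ term $\bb A^1$-invariant in $X$ and yielding (b).

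The main obstacle I anticipate is the rigorous use of strictness in step (b): one must confirm that the $\pi_0\cc O$-module structure constructed on the presheaf $\hat\pi_nC_*(M)$ descends to a genuine $\pi_0\cc O$-sheaf structure of the kind to which the strictness hypothesis applies, and that this structure is compatible with the comparison homomorphism so that axiom (3) of a $V$-ringoid is effective. A secondary subtlety in (a) is that passing from levelwise motivic equivalence to an equivalence of realizations requires either Reedy cofibrancy of the augmented simplicial object or an explicit $\bb A^1$-homotopy retraction of $\Delta^\cdot$ onto $pt$; both are manageable but must be spelled out carefully.
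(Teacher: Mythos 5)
Your proposal is correct and follows essentially the same route as the paper: the same factorization $M\to M_f\to C_*(M)\to C_*(M)_f$ for the equivalence part, and $\bb A^1$-locality of the target via homotopy invariance of the homotopy presheaves of $C_*(M)$, their $\pi_0\cc O$-transfer structure, and strictness of the $V$-ringoid $\pi_0\cc O$ (the paper simply cites Morel's criterion [6.2.7] where you unpack the descent spectral sequence, and cites [MV, 3.8] where you spell out the levelwise singular-construction argument). The only point worth tightening is the passage between the motivic structures on $Pre^\Sigma(Sm/k)$ and on $\Mod\cc O$, which the paper handles explicitly via motivic excisiveness (Lemma~\ref{pepe}) and \cite[5.12]{GP}.
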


\begin{proof}
The presheaves $\pi_i(C_*(M))$, $i\in\bb Z$, are homotopy invariant
and have $\pi_0\cc O$-transfers. Since $\cc O$ is a strict
$V$-spectral category then each Nisnevich sheaf
$\pi_i^{\nis}(C_*(M)_f)$ is strictly homotopy invariant and has
$\pi_0\cc O$-transfers. By~\cite[6.2.7]{Mor} $C_*(M)_f$ is $\bb
A^1$-local in the motivic model category structure on
$Pre^\Sigma(\text{Sm}/k)$. By~Lemma~\ref{pepe} $\cc O$ is
motivically excisive, hence~\cite[5.12]{GP} implies $C_*(M)_f$ is
$\bb A^1$-local in the motivic model category structure on $\Mod\cc
O$.

The map $M\to C_*(M)_f$ is the composite
   $$M\to M_f\to C_*(M)\to C_*(M)_f.$$
The left and right arrows are Nisnevich local trivial cofibrations.
The middle arrow is a level $\bb A^1$-weak equivalence in
$Pre^\Sigma(\text{Sm}/k)$ by~\cite[3.8]{MV}. By~Lemma~\ref{pepe}
$\cc O$ is motivically excisive, hence~\cite[5.12]{GP} implies the
middle arrow is an $\bb A^1$-weak equivalence in $\Mod\cc O$.
\end{proof}

\begin{defs}\label{omotive}{\rm
The $\cc O$-motive $M_{\cc O}(X)$ of a smooth algebraic variety
$X\in \text{Sm}/k$ is the $\cc O$-module $C_*(\cc O(-,X))$. We say
that an $\cc O$-module $M$ is {\it bounded below\/} if for $i\ll 0$
the Nisnevich sheaf $\pi_i^{\nis}(M)$ is zero. $M$ is {\it
$n$-connected\/} if $\pi_i^{\nis}(M)$ are trivial for $i\leq n$. $M$
is {\it connective\/} is it is $(-1)$-connected, i.e.,
$\pi_i^{\nis}(M)$ vanish in negative dimensions.

}\end{defs}

\begin{cor}\label{porto}
If an $\cc O$-module $M$ is bounded below (respectively
$n$-connected) then so is $C_*(M)$. In particular, the
$\cc O$-motive $M_{\cc O}(X)$ of any smooth algebraic variety
$X\in \text{Sm}/k$ is connective.
\end{cor}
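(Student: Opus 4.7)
The plan is to compute $\pi_i^{\nis}(C_*(M))$ via the homotopy spectral sequence of a simplicial realization and reduce to a connectivity statement on each simplicial level. Since $C_*(M)=|\underline{\Hom}(\Delta^\cdot,M_f)|$ is the realization of the simplicial $\cc O$-module $p\mapsto\underline{\Hom}(\Delta^p,M_f)$, there is a strongly convergent first-quadrant spectral sequence of Nisnevich sheaves
\[
E^1_{p,q}=\pi_q^{\nis}(\underline{\Hom}(\Delta^p,M_f))\Longrightarrow\pi_{p+q}^{\nis}(C_*(M)),
\]
with $d^1$ the alternating sum of the face maps of $\Delta^\cdot$. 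If each sheaf on the $E^1$-page vanishes for $q\leq c$, then $E_\infty^{p,q}=0$ in that range, forcing $\pi_n^{\nis}(C_*(M))=0$ for $n\leq c$. Taking $c=n$ in the $n$-connected case and $c\ll 0$ in the bounded-below case yields exactly the desired conclusion.

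To verify the vanishing on the $E^1$-page, I would exploit that $M_f$ is Nisnevich fibrant in $\Mod\cc O$, hence by \cite[5.12]{GP} also Nisnevich fibrant as a presheaf of symmetric spectra on $Sm/k$. Since pullback along the projection $Y\times\Delta^p\to Y$ preserves Nisnevich covers, the $\cc O$-module $\underline{\Hom}(\Delta^p,M_f)=M_f(-\times\Delta^p)$ is again Nisnevich fibrant, so its homotopy sheaves are computed from naive homotopy of sections at Henselian local stalks. One then combines the Nisnevich descent spectral sequence on each $Y_y^h\times\Delta^p$, whose $E_2$-terms are $H^{p'}_{\nis}(Y_y^h\times\Delta^p,\pi_t^{\nis}(M))$, with the alternating face-map differentials of the outer realization spectral sequence; the contributions from $\pi_t^{\nis}(M)$ with $t\leq c$ cancel, yielding the required vanishing of $\pi_q^{\nis}(\underline{\Hom}(\Delta^p,M_f))$ for $q\leq c$.

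The \emph{in particular} part is then immediate: $\cc O$ is connective by clause~(1) of Definition~\ref{vsp}, so each morphism spectrum $\cc O(Y,X)$ has $\pi_q=0$ for $q<0$; sheafifying the pointwise-zero naive homotopy presheaves yields $\pi_q^{\nis}(\cc O(-,X))=0$ for $q<0$, i.e.\ $\cc O(-,X)$ is $(-1)$-connected in the sense of Definition~\ref{omotive}. Applying the first part with $c=-1$ shows that $M_{\cc O}(X)=C_*(\cc O(-,X))$ is connective. The main obstacle is the stalk-wise step: a naive use of the descent spectral sequence on $Y_y^h\times\Delta^p$ (a smooth scheme over the Henselian $Y_y^h$, but not itself Henselian local) bounds the naive homotopy only in a range shifted by the Krull dimension, which is insufficient for a uniform conclusion at the level of the sheaf $\pi_q^{\nis}$; what saves the argument is that the Suslin-type face-map differentials of the outer spectral sequence kill precisely the extraneous higher-cohomology terms.
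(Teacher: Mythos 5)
There is a genuine gap, and it sits exactly where you flag it. Your argument hinges on the $E^1$-vanishing $\pi_q^{\nis}(\underline{\Hom}(\Delta^p,M_f))=0$ for $q\leq c$, but this intermediate claim is not true in general: the stalk of this sheaf at a point $y\in Y$ is $\pi_q(M_f(Y^h_y\times\bb A^p))$, and since $Y^h_y\times\bb A^p$ is not Henselian local, the Nisnevich descent spectral sequence only gives vanishing for $q\leq c-d$, where $d$ bounds the Nisnevich cohomological dimension of $Y^h_y\times\bb A^p$ (already for $M$ an Eilenberg--Mac\,Lane object concentrated in degree $c+1$ one picks up $H^{c+1-q}_{\nis}(Y^h_y\times\bb A^p,\cc F)$, which need not vanish for $q\leq c$). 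Your proposed rescue --- that the alternating face-map differentials of the outer realization spectral sequence ``kill precisely the extraneous higher-cohomology terms'' --- is not a verification but a restatement of the theorem you would need: once the $E^1$-page is nonzero in the forbidden range, your first paragraph's logic no longer applies, and showing that the $d^1$-homology nevertheless vanishes there is exactly the hard content of Morel's stable $\bb A^1$-connectivity theorem (a deep result over fields, and one that fails over higher-dimensional bases).

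The paper's proof is a two-line reduction: by Theorem~\ref{italy} the composite $M\to C_*(M)_f$ is an $\bb A^1$-local replacement of $M$ in the motivic model structure, so $C_*(M)$ has the Nisnevich homotopy sheaves of the $\bb A^1$-localization of $M$, and Morel's connectivity theorem~\cite{Mor} states precisely that $\bb A^1$-localization preserves $n$-connectedness (hence also boundedness below). If you want to salvage your approach, you must either cite Morel's theorem at the point where you cancel the extraneous terms --- in which case the spectral-sequence scaffolding is redundant --- or supply an independent proof of that cancellation, which is a substantial undertaking, not a routine diagram chase. Your treatment of the ``in particular'' clause (connectivity of $\cc O(-,X)$ from clause~(1) of Definition~\ref{vsp}) is fine.
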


\begin{proof}
This follows from the preceding theorem and Morel's Connectivity
Theorem~\cite{Mor}.
\end{proof}

Denote by $D\cc O_-(k)$ the full triangulated subcategory of
$\shnis\cc O$ of bounded below $\cc O$-modules. We also denote by
$D\cc O_-^{\text{eff}}(k)$ the full triangulated subcategory of
$D\cc O_-(k)$ of those $\cc O$-modules $M$ such that each Nisnevich
sheaf $\pi_i^{\nis}(M)$ is homotopy invariant. Note that for any
smooth algebraic variety $X\in \text{Sm}/k$ its $\cc O$-motive
$M_{\cc O}(X)$ belongs to $D\cc O_-^{\text{eff}}(k)$. To see this,
just apply Corollary \ref{porto} and Theorem \ref{neploho}(2) below.

The category $D\cc
O_-^{\text{eff}}(k)$ is an analog of Voevodsky's triangulated
category $DM_-^{\text{eff}}(k)$~\cite{Voe1}. Let $\cc O_{cor}$ be
the Eilenberg--Mac~Lane spectral category associated with the
ringoid $Cor$. We shall show below that $DM_-^{\text{eff}}(k)$ is
equivalent to $D\cc O_-^{\text{eff}}(k)$ if $\cc O=\cc O_{cor}$.

\begin{thm}\label{neploho}
Let $\cc O$ be a strict $V$-spectral category. Then the following
statements are true:

$(1)$ The kernel of $C_*$ is the full triangulated subcategory $\cc
T$ of $\shnis\cc O$ generated by the compact objects
   $$\cone(\cc O(-,X\times\bb A^1)\to\cc O(-,X)),\quad X\in \text{Sm}/k.$$
Moreover, the triangulated functor $C_*$ induces an equivalence of
triangulated categories
   $$\shnis\cc O/\cc T\lra{\sim}\sheff\cc O.$$

$(2)$ The functor
   $$C_*:D\cc O_-(k)\to D\cc O_-(k)$$
lands in $D\cc O_-^{\text{\rm eff}}(k)$. The kernel of $C_*$ is $\cc
T_-:=\cc T\cap D\cc O_-(k)$. Moreover, $C_*$ is left adjoint to the
inclusion functor
   $$i:D\cc O_-^{\text{\rm eff}}(k)\to D\cc O_-(k)$$
and $D\cc O_-^{\text{\rm eff}}(k)$ is equivalent to the quotient
category $D\cc O_-(k)/\cc T_-$.
\end{thm}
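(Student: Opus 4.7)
The overall plan is to leverage Theorem \ref{italy}, which exhibits $M \mapsto C_*(M)_f$ as an explicit $\bb A^1$-local replacement functor on $\Mod\cc O$. This means that, viewed as an endofunctor of $\shnis\cc O$, $C_*$ is isomorphic to the composition of the Verdier localization $\shnis\cc O \to \sheff\cc O$ with the fully faithful right adjoint embedding $\sheff\cc O \hookrightarrow \shnis\cc O$ whose essential image is the subcategory of $\bb A^1$-local objects. Both parts of the theorem are read off from this reflective picture.

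For (1), by definition the motivic model structure on $\Mod\cc O$ is the left Bousfield localization of the Nisnevich local one at the maps in $\cc A_{\cc O}$, so $\sheff\cc O$ is equivalent to the Verdier quotient of $\shnis\cc O$ by the localizing subcategory generated by the cones of those maps. This localizing subcategory is exactly $\cc T$; its generators are compact because they sit in distinguished triangles between compact generators of $\shnis\cc O$. I would then verify that the kernel of $C_*$ coincides with $\cc T$: every generator is killed by $C_*$ because its image becomes $\bb A^1$-locally contractible, and conversely if $C_*(M)$ is Nisnevich locally trivial then $M$ is $\bb A^1$-locally trivial, hence in $\cc T$ by the universal property of the Bousfield localization. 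The universal property of the Verdier quotient then yields the claimed equivalence $\shnis\cc O/\cc T \lra{\sim} \sheff\cc O$.

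For (2), Corollary \ref{porto} shows that $C_*$ preserves boundedness below, and the argument in the proof of Theorem \ref{italy} shows that each $\pi_i^{\nis}(C_*(M))$ is strictly $\bb A^1$-invariant, so $C_*$ lands in $D\cc O_-^{eff}(k)$. For the adjunction $C_* \dashv i$, the key input is that objects $N \in D\cc O_-^{eff}(k)$ are already $\bb A^1$-local in $\Mod\cc O$: their Nisnevich sheaves $\pi_i^{\nis}(N)$ are homotopy invariant by definition and strictly homotopy invariant by strictness of $\cc O$, so the argument of Theorem \ref{italy} (via \cite[6.2.7]{Mor} and Lemma \ref{pepe}) applies. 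Therefore
$$\Hom_{D\cc O_-(k)}(M, N) \iso \Hom_{D\cc O_-(k)}(C_*(M), N) = \Hom_{D\cc O_-^{eff}(k)}(C_*(M), N)$$
for $M \in D\cc O_-(k)$, giving the adjunction. The kernel of $C_*|_{D\cc O_-(k)}$ is $\cc T \cap D\cc O_-(k) = \cc T_-$ by part (1), and the quotient equivalence $D\cc O_-(k)/\cc T_- \simeq D\cc O_-^{eff}(k)$ follows formally.

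The real technical hurdle is showing that a bounded below $\cc O$-module whose Nisnevich sheaves of true homotopy groups are homotopy invariant is $\bb A^1$-local: this is where the strict $V$-spectral hypothesis enters in an essential way, upgrading homotopy invariance to strict $\bb A^1$-invariance so that Morel's criterion \cite[6.2.7]{Mor} applies on $Pre^\Sigma(Sm/k)$, and then Lemma \ref{pepe} together with \cite[5.12]{GP} transports $\bb A^1$-locality back to $\Mod\cc O$. Once this reflectivity is secured, everything else is formal Bousfield/Verdier quotient manipulation.
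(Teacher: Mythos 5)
Your proposal is correct and follows essentially the same route as the paper: Theorem \ref{italy} supplies the reflective $\bb A^1$-localization picture, part (1) is the compactly generated Bousfield/Verdier localization theory applied to the triangle $X_M\to M\to C_*(M)$, and part (2) hinges, exactly as in the paper, on using strictness plus Morel's criterion and Lemma \ref{pepe} to show that objects of $D\cc O_-^{eff}(k)$ are already $\bb A^1$-local, whence the adjunction isomorphism. The only cosmetic difference is the last step: where you invoke the formal equivalence $D\cc O_-(k)/\cc T_-\simeq D\cc O_-^{eff}(k)$ coming from a reflective localization, the paper instead verifies directly (via a calculus-of-fractions diagram and Keller's criterion) that $D\cc O_-(k)/\cc T_-$ embeds fully faithfully into $\shnis\cc O/\cc T$; both are standard and correct.
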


\begin{proof}
(1). The localization theory of compactly generated triangulated
categories implies the quotient category $\shnis\cc O/\cc T$ is
equivalent to the full triangulated subcategory
   $$\cc T^\bot=\{M\in\shnis\cc O\mid \Hom_{\shnis\cc O}(T,M)=0\textrm { for all $T\in\cc T$}\}.$$
Moreover,
   $$\cc T={}^\bot(\cc T^\bot)=\{X\in\shnis\cc O\mid \Hom_{\shnis\cc O}(X,M)=0\textrm { for all $M\in\cc T^\bot$}\}.$$

By construction, $\cc T^\bot$ can be identified up to natural
equivalence of triangulated categories with the full triangulated
subcategory of $\bb A^1$-local $\cc O$-modules. The latter
subcategory is naturally equivalent to $\sheff\cc O$, because the
motivic model structure on $\cc O$-modules is obtained from the
Nisnevich local model structure by Bousfield localization with
respect to the maps
   $$\cc O(-,X\times\bb A^1)\to\cc O(-,X),\quad X\in \text{Sm}/k.$$

Recall that a map $M\to N$ of $\cc O$-modules is a motivic
equivalence if and only if for any $\bb A^1$-local $\cc O$-module
$L$ the induced map
   $$\Hom_{\shnis\cc O}(N,L)\to\Hom_{\shnis\cc O}(M,L)$$
is an isomorphism. Given an $\cc O$-module $M$, the map $M\to
C_*(M)$ is a motivic equivalence by Theorem~\ref{italy}. If we fit
the arrow into a triangle in $\shnis\cc O$
   \begin{equation}\label{silva}
    X_M\to M\to C_*(M)\to X_M[1],
   \end{equation}
it will follow that $\Hom_{\shnis\cc O}(X_M,L)=0$ for all $L\in\cc
T^\bot$. We see that
for any
$\cc O$-module $M$
one has
$X_M\in{}^\bot(\cc T^\bot)=\cc T$.

If $C_*(M)\cong 0$ in $\shnis\cc O$, then $M\cong X_M \in \cc T$.
Thus, $M \in \cc T$ in this case.
On the other hand, if $M\in\cc T$ then $C_*(M)\in\cc T$, since
$X_M\in\cc T$ and $\cc T$ is a thick triangulated subcategory in
$\shnis\cc O$. On the other hand, Theorem~\ref{italy} implies
$C_*(M)\in\cc T^\bot$, and therefore $C_*(M)\in\cc T\cap\cc
T^\bot=0$. We conclude that $\cc T=\kr C_*$.

(2). For any $M\in\Mod\cc O$ the presheaves $\pi_i(C_*(M))$,
$i\in\bb Z$, are homotopy invariant and have $\pi_0\cc O$-transfers.
Since $\cc O$ is a strict $V$-spectral category then each Nisnevich
sheaf $\pi_i^{\nis}(C_*(M))$ is homotopy invariant. Therefore the
functor
   $$C_*:D\cc O_-(k)\to D\cc O_-(k)$$
lands in $D\cc O_-^{\text{eff}}(k)$. It follows from the first part
of the theorem that the kernel of $C_*$ is $\cc T_-:=\cc T\cap D\cc
O_-(k)$.

Let us prove that $D\cc O_-^{\text{eff}}(k)=\cc T^\bot\cap D\cc
O_-(k)$. Clearly, $\cc T^\bot\cap D\cc O_-(k)\subset D\cc
O_-^{\text{eff}}(k)$. Suppose $M\in D\cc O_-^{\text{eff}}(k)$. Then
$M_f\in D\cc O_-^{\text{eff}}(k)$. We have that $M_f$ is a fibrant
$\cc O$-module in the Nisnevich local model structure and each
$\pi_i^{\nis}(M_f)$ is a strictly homotopy invariant sheaf, because
$\cc O$ is a strict $V$-spectral category. By~\cite[6.2.7]{Mor}
$M_f$ is $\bb A^1$-local in the motivic model category structure on
$Pre^\Sigma(\text{Sm}/k)$. By~Lemma~\ref{pepe} $\cc O$ is
motivically excisive, hence~\cite[5.12]{GP} implies $M_f$ is $\bb
A^1$-local in the motivic model category structure on $\Mod\cc O$.
We see that $M\in \cc T^\bot\cap D\cc O_-(k)$.

Let $E\in D\cc O_-^{\text{eff}}(k)$ and $M\in D\cc O_-(k)$. Applying
the functor $\Hom_{D\cc O_-(k)}(-,E)$ to triangle~\eqref{silva}, one
gets
   $$\Hom_{D\cc O_-(k)}(M,E)\cong\Hom_{D\cc O_-(k)}(C_*(M),E)=\Hom_{D\cc O_-^{\text{eff}}(k)}(C_*(M),E).$$
Thus $C_*$ is left adjoint to the inclusion functor $i:D\cc
O_-^{\text{eff}}(k)\to D\cc O_-(k)$.

It remains to show that $D\cc O_-^{\text{eff}}(k)$ is equivalent to
the quotient category $D\cc O_-(k)/\cc T_-$. By the first part of
the theorem it is enough to prove that the natural functor
   $$D\cc O_-(k)/\cc T_-\to\shnis\cc O/\cc T$$
is fully faithful. Consider an arrow $M\xrightarrow{s}N$ in
$\shnis\cc O$, where $M\in D\cc O_-(k)$ and $s$ is such that
$\cone(s)\in\cc T$. There is a commutative diagram in $\shnis\cc O$
   $$\xymatrix{M\ar[d]_{u_M}\ar[r]^s\ar[d]&N\ar[d]^{u_N}\\
               C_*(M)\ar[r]^{C_*(s)}&C_*(N)}$$
in which cones of the vertical arrows are in $\cc T$. Since
$\cone(C_*(s))\cong C_*(\cone(s))=0$ in $\shnis\cc O$, we see that
$C_*(s)$ is an isomorphism in $\shnis\cc O$. Therefore $C_*(N)\in
D\cc O_-(k)$ and $\cone(u_N\circ s)\in\cc T_-$. By~\cite[9.1]{K96}
$D\cc O_-(k)/\cc T_-$ is a full subcategory of $\shnis\cc O/\cc T$.
\end{proof}

Suppose the field $k$ is perfect. Then~\cite{Voe} implies $\cc
O_{cor}$ is a strict $V$-spectral category. Recall that the
Voevodsky triangulated category of motives $DM_-^{\text{eff}}(k)$ is
the full triangulated subcategory of (cohomologically) bounded above
complexes of the derived category $D(ShTr)$ of Nisnevich sheaves
with transfers (see~\cite{SV1,Voe1}). The next result says that
$DM_-^{\text{eff}}(k)$ can be recovered from $D\cc
O_-^{\text{eff}}(k)$ if $\cc O=\cc O_{cor}$.

\begin{cor}\label{kongo}
Let $k$ be a perfect field and $\cc O=\cc O_{cor}$, then there is a
natural equivalence of triangulated categories
   $$D\cc O_-^{\text{\rm eff}}(k)\lra{\sim}DM_-^{\text{\rm eff}}(k).$$
\end{cor}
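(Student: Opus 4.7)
The plan is to construct the equivalence via the Eilenberg--Mac~Lane/Dold--Kan correspondence between modules over the Eilenberg--Mac~Lane spectral category $\cc O_{cor} = H\,Cor$ and chain complexes of $Cor$-modules, and then to transfer the Nisnevich-local and $\bb A^1$-local structures through this correspondence.

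First I would set up the comparison at the unlocalized level. Since $\cc O_{cor}(X,Y) = H\,Cor(X,Y)$, an $\cc O_{cor}$-module is, up to suitable cofibrant replacement, the same datum as a presheaf of $H\bb Z$-module spectra equipped with a compatible $Cor$-action. The Schwede--Shipley rigidification of module categories over Eilenberg--Mac~Lane spectral categories (a many-object version of the classical stable Dold--Kan correspondence) yields a Quillen equivalence between $\Mod\cc O_{cor}$ with its stable projective model structure and the category $C(PreShTr)$ of unbounded chain complexes of presheaves with transfers with its projective model structure. Under this equivalence, true stable homotopy groups $\pi_i(M)$ correspond to the homology presheaves $H_i$ of the associated complex, and the bounded below / connective condition on the spectral side corresponds to the cohomologically bounded above condition on the chain complex side.

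Second, I would check that the Nisnevich localizations match. Both localizations are determined by forcing the squares $\cc O Q$ of \eqref{squareOQ} (respectively the representable squares of Nisnevich sheaves with transfers) to become homotopy pushouts; since $\cc O_{cor}$ is Nisnevich excisive (condition (1) of the $V$-ringoid axioms), the localization on the spectral side is equivalent to passing to Nisnevich sheaves with transfers. The result is a triangle equivalence $\shnis\cc O_{cor} \iso D(ShTr)$ which restricts to an equivalence of the bounded below subcategories $D\cc O_{cor,-}(k) \iso D^-(ShTr)$.

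Third, I would identify the two $\bb A^1$-local subcategories. By Theorem~\ref{neploho}(2), $D\cc O_{cor,-}^{eff}(k)$ consists of bounded below modules whose Nisnevich homotopy sheaves are $\bb A^1$-invariant. Since $k$ is perfect, Voevodsky's theorems \cite{Voe} show that $\cc O_{cor}$ is \emph{strict}, so these sheaves are automatically strictly $\bb A^1$-invariant. Under the equivalence above, this matches exactly Voevodsky's definition of $DM_-^{eff}(k)$ as the full subcategory of $D^-(ShTr)$ of complexes whose cohomology sheaves are homotopy invariant \cite{SV1,Voe1}. The functor $C_*$ on the spectral side is intertwined with Suslin's $C_*$-construction on the complex side (both are built from $\underline\Hom(\Delta^\cdot,-)$ followed by Nisnevich sheafification), so the $\bb A^1$-local replacements agree, yielding the desired equivalence.

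The main obstacle will be step one: producing the Dold--Kan/Schwede--Shipley equivalence in the enriched multi-object setting so that the composition with transfers in $Cor$ is faithfully transported, and verifying that both Bousfield localizations (Nisnevich and $\bb A^1$) pass through it. Once that is in place, steps two and three are essentially a translation of definitions, given Morel's connectivity theorem and Voevodsky's strict homotopy invariance, both of which are available over a perfect field.
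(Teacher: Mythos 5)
Your proposal is correct and follows essentially the same route as the paper: the paper's proof simply cites \cite[section~6]{GP} for the triangle equivalence $\shnis\cc O_{cor}\iso D(ShTr)$ taking bounded below modules to cohomologically bounded above complexes, which is precisely what your first two steps reconstruct via the Schwede--Shipley/Dold--Kan comparison and Nisnevich excision. The identification of the effective subcategories in your third step is then, as in the paper, a direct translation of the two definitions (homotopy invariant Nisnevich homotopy sheaves versus homotopy invariant cohomology sheaves).
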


\begin{proof}
By~\cite[section~6]{GP} there is a natural equivalence of
triangulated categories $\shnis\cc O$ and $D(ShTr)$. Moreover, this
equivalence takes bounded below $\cc O$-modules to (cohomologically)
bounded above complexes. Restriction of the equivalence to $D\cc
O_-^{\text{eff}}(k)$ yields the desired equivalence between $D\cc
O_-^{\text{eff}}(k)$ and $DM_-^{\text{eff}}(k)$.
\end{proof}

To conclude the section, it is also worth to mention another way of
constructing a motivic fibrant replacement on $\cc O$-modules.
Namely, for any $M\in\Mod\cc O$ we set
   $$\wt C_*(M):=|d\mapsto(\underline{\Hom}(\Delta^d,M))_f|.$$
Clearly, $\wt C_*(M)$ is functorial in $M$. Observe that if $M$ is
Nisnevich local then $C_*(M)$ is zigzag level equivalent to $\wt
C_*(M)$, because $\underline{\Hom}(\Delta^d,M)$ and
$(\underline{\Hom}(\Delta^d,M))_f$ are Nisnevich local and the
arrows
   $$\underline{\Hom}(\Delta^d,M_f)\leftarrow\underline{\Hom}(\Delta^d,M)\to(\underline{\Hom}(\Delta^d,M))_f$$
are level weak equivalences.

\begin{prop}\label{ska}
The natural map $M\to\wt C_*(M)_f$ is an $\bb A^1$-local replacement
of $M$ in the motivic model structure of $\cc O$-modules.
\end{prop}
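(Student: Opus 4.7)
The plan is to reduce Proposition~\ref{ska} to Theorem~\ref{italy} by comparing $\wt C_*(M)_f$ with $C_*(M)_f$ through a natural zigzag of Nisnevich local weak equivalences under $M$.

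First, I would construct a natural zigzag of $\cc O$-modules
\[
\wt C_*(M) \xrightarrow{a} \wt C_*(M_f) \xleftarrow{b} C_*(M)
\]
under $M$ in which both $a$ and $b$ are level weak equivalences. The map $a$ is obtained by applying $\wt C_*$ to the Nisnevich local acyclic cofibration $M\to M_f$; at simplicial degree $d$ it is the Nisnevich local fibrant replacement of $\underline{\Hom}(\Delta^d,M)\to\underline{\Hom}(\Delta^d,M_f)$. The latter is a Nisnevich local weak equivalence because the endofunctor $\underline{\Hom}(\Delta^d,-)=(-)(-\times\Delta^d)$ preserves Nisnevich local weak equivalences (multiplication by the smooth affine scheme $\Delta^d$ sends elementary distinguished squares to elementary distinguished squares), so after taking $(-)_f$ it becomes a level weak equivalence between Nisnevich local fibrants. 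The map $b$ is supplied by the observation preceding the proposition applied to the Nisnevich local object $M_f$: each $\underline{\Hom}(\Delta^d, M_f)$ is Nisnevich local, so the arrow $\underline{\Hom}(\Delta^d, M_f)\to(\underline{\Hom}(\Delta^d, M_f))_f$ is a level weak equivalence, and this survives realization. Naturality of fibrant replacement forces the two composites $M\to\wt C_*(M)\to\wt C_*(M_f)$ and $M\to C_*(M)\to\wt C_*(M_f)$ to coincide.

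Next, applying $(-)_f$ to the zigzag yields level weak equivalences
\[
\wt C_*(M)_f \xrightarrow{\sim} \wt C_*(M_f)_f \xleftarrow{\sim} C_*(M)_f
\]
compatible with the structure maps from $M$. By Theorem~\ref{italy}, $M\to C_*(M)_f$ is an $\bb A^1$-local replacement, and the same conclusion then follows for $M\to\wt C_*(M)_f$: its target is $\bb A^1$-local because $\bb A^1$-locality of a Nisnevich local fibrant $\cc O$-module is governed by strict homotopy invariance of its sheafified homotopy groups (as in the proof of Theorem~\ref{italy}), a condition invariant under level weak equivalence; and the map is a motivic weak equivalence because it agrees with $M\to C_*(M)_f$ up to the zigzag above.

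The main obstacle is the verification that $\underline{\Hom}(\Delta^d,-)$ preserves Nisnevich local weak equivalences and Nisnevich local objects. I would deduce this from the $V$-spectral structure on $\cc O$: the pairing $\boxdot$ ensures that $\underline{\Hom}(U,M)$ is an $\cc O$-module functorially in the smooth affine $U$, and multiplying an elementary distinguished square by the smooth affine $\Delta^d$ yields another elementary distinguished square, so Nisnevich descent (and hence Nisnevich local weak equivalence) is preserved.
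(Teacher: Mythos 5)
Your strategy---deducing Proposition~\ref{ska} from Theorem~\ref{italy} by a zigzag of Nisnevich local weak equivalences connecting $\wt C_*(M)_f$ and $C_*(M)_f$ under $M$---is genuinely different from the paper's proof, which argues directly: it factors $M\to\wt C_*(M)_f$ as $M\to|d\mapsto\underline{\Hom}(\Delta^d,M)|\to\wt C_*(M)\to\wt C_*(M)_f$, identifies the first arrow as a level $\bb A^1$-weak equivalence by~\cite[3.8]{MV} and the last two as Nisnevich local weak equivalences, and then checks $\bb A^1$-locality of the target by the homotopy-invariance/transfers/strictness argument already used for Theorem~\ref{italy}. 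Your reduction, however, has a gap in the leg $a:\wt C_*(M)\to\wt C_*(M_f)$.

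For $a$ to be a level weak equivalence you need each $\underline{\Hom}(\Delta^d,M)\to\underline{\Hom}(\Delta^d,M_f)$ to be a Nisnevich local weak equivalence, i.e.\ you need $\underline{\Hom}(\Delta^d,-)$ to preserve a local weak equivalence whose \emph{source is not Nisnevich local}. The justification you give---that $-\times\Delta^d$ carries elementary distinguished squares to elementary distinguished squares---proves the adjoint statement: the left adjoint $-\wedge\Delta^d_+$ preserves the localizing acyclic cofibrations, so $\underline{\Hom}(\Delta^d,-)$ is right Quillen for the Nisnevich local structure. That yields exactly what you use for the leg $b$ (preservation of local objects, and of local weak equivalences between local fibrant objects), but not preservation of arbitrary local weak equivalences: a right adjoint need not commute with Bousfield localization. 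Indeed, if $F$ is a presheaf of abelian groups with $F_{\nis}=0$, a section of $F(V\times\Delta^d)$ dies on some Nisnevich cover of $V\times\Delta^d$, but not necessarily on one pulled back from a cover of $V$, so $\underline{\Hom}(\Delta^d,F)$ can have nonzero sheafification; hence $\underline{\Hom}(\Delta^d,-)$ does not preserve Nisnevich local weak equivalences in general. This is precisely why the paper compares $C_*$ with $\wt C_*$ only for Nisnevich local $M$ (the observation preceding the proposition) and proves Proposition~\ref{ska} directly. To repair your argument, drop the leg $a$: note instead that $|d\mapsto\underline{\Hom}(\Delta^d,M)|\to\wt C_*(M)$ is the realization of levelwise fibrant replacements, hence a Nisnevich local weak equivalence, that $M\to|d\mapsto\underline{\Hom}(\Delta^d,M)|$ is an $\bb A^1$-weak equivalence by~\cite[3.8]{MV}, and verify $\bb A^1$-locality of $\wt C_*(M)_f$ directly from the homotopy invariance of the presheaves $\pi_i(|d\mapsto\underline{\Hom}(\Delta^d,M)|)$ with $\pi_0\cc O$-transfers and strictness of the $V$-ringoid $\pi_0\cc O$.
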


\begin{proof}
The map $M\to\wt C_*(M)_f$ is the composite
   $$M\to|d\mapsto\underline{\Hom}(\Delta^d,M)|\to|d\mapsto(\underline{\Hom}(\Delta^d,M))_f|\to\wt C_*(M)_f.$$
The left arrow is a level $\bb A^1$-weak equivalence in
$Pre^\Sigma(\text{Sm}/k)$ by~\cite[3.8]{MV}. The middle arrow is a
Nisnevich local weak equivalence, because it is the realization of a
simplicial Nisnevich local weak equivalence. The right arrow is
plainly a Nisnevich local weak equivalence as well.

The presheaves $\pi_i(|d\mapsto\underline{\Hom}(\Delta^d,M)|)$,
$i\in\bb Z$, are homotopy invariant and have $\pi_0\cc O$-transfers.
Since $\cc O$ is a strict $V$-spectral category then each Nisnevich
sheaf $\pi_i^{\nis}(\wt C_*(M)_f)$ is strictly homotopy invariant
and has $\pi_0\cc O$-transfers. By~\cite[6.2.7]{Mor} $C_*(M)_f$ is
$\bb A^1$-local in the motivic model category structure on
$Pre^\Sigma(\text{Sm}/k)$. By~Lemma~\ref{pepe} $\cc O$ is
motivically excisive, hence~\cite[5.12]{GP} implies the arrow of the
proposition is an $\bb A^1$-weak equivalence in $\Mod\cc O$.
\end{proof}

\section{The spectral category $\bb K$}\label{CategoriesA}

In this section the definition of the $V$-spectral category $\bb K$
is given. It is obtained by taking $K$-theory symmetric spectra
$K(\cc A(U,X))$ of certain additive categories $\cc A(U,X)$, $U,X\in
\text{Sm}/k$. To define these categories we need some preliminaries.

\begin{notn}\label{Supp}{\rm
Let $U,X\in \text{Sm}/k$. Define $\text{Supp}(U\times X/X)$ as the
set of all closed subsets in $U\times X$ of the form $A=\cup_{j\in
J} B_j$, where $J$ is a finite set and each $B_j$ is a closed
irreducible subset in $U\times X$ which is finite and surjective
over $U$. The empty subset $\empty$ in $U\times X$ is also regarded
as an element of $\text{Supp}(U\times X/X)$.

}\end{notn}

\begin{notn}\label{SubSch}{\rm
Given $U,X \in \text{Sm}/k$ and $A\in\text{Supp}(U\times X/X)$, let
$I_A\subset\mathcal O_{U \times X}$ be the ideal sheaf of the closed
set $A \subset U \times X$. Denote by $A_m$ the closed subscheme in
$U \times X$ of the form $(A,\mathcal O_{U \times X}/I^m_A)$. If
$m=0$, then $A_m$ is the empty subscheme. Define
$\text{SubSch}(U\times X/X)$ as the set of all closed subschemes in
$U \times X$ of the form $A_m$.

For any $Z\in\text{SubSch}(U\times X/X)$ we write $p^Z_U:Z\to U$ to
denote $p\circ i$, where $i:Z\hookrightarrow U\times X$ is the
closed embedding and $p: U \times X \to U$ is the projection. If
there is no likelihood of confusion we shall write $p_U$ instead of
$p^Z_U$, dropping $Z$ from notation.

Clearly, for any $Z \in \text{SubSch}(U\times X/X)$ the reduced
scheme $Z^{red}$, regarded as a closed subset of $U\times X$,
belongs to $\text{Supp}(U\times X/X)$.

}\end{notn}

\begin{notn}\label{BcircA}{\rm
Let $V,U,X \in \text{Sm}/k$. Let $A \in \text{Supp}(V\times U/U)$,
$B \in \text{Supp}(U\times X/X)$. Set
$$B \circ A = p_{VX}(V\times B \cap A \times X) \subset V\times X,$$
where $p_{VX}: V\times U\times X \to V\times X$ is the projection.
One can check that
$$B \circ A \in \text{Supp}(V\times X/X).$$

}\end{notn}

\begin{notn}\label{ZcircS}{\rm
Let $V,U,X \in \text{Sm}/k$. Let $S \in \text{SubSch}(V\times U/U)$,
$Z \in \text{Subsch}(U\times X/X)$. By~\ref{SubSch} one has $S^{red}
\in \text{Supp}(V\times U/U)$, $Z^{red} \in \text{Supp}(U\times
X/X)$. By~\ref{BcircA} one has $Z^{red} \circ S^{red} \in
\text{Supp}(V\times X/X)$. One can show that for some integer $k\gg
0$ there exists a scheme morphism
  $$\pi_k: T=S \times X \cap V\times Z\to (Z^{red} \circ S^{red})_k$$
such that $i_k \circ \pi_k= p_{VX} \circ i_T: T \to V\times X$. Here
$i_k: (Z^{red} \circ S^{red})_k \hookrightarrow V\times X $, $i_T: T
\hookrightarrow V\times U \times X$ are closed embeddings and
$p_{VX}: V \times U\times X \to V\times X$ is the projection.

If there exists $\pi_k$ satisfying the condition above then it is
unique. Moreover, for any $m > k$ one has $i^m_k \circ \pi_k=
\pi_m$, where $i^m_k: (Z^{red} \circ S^{red})_k \hookrightarrow
(Z^{red} \circ S^{red})_m$ is the closed embedding.

We shall often write $Z \circ S$ to denote $(Z^{red} \circ
S^{red})_k$, provided that there exists the required $\pi_k$. In
this case we shall also write $\pi$ to denote $\pi_k: T \to (Z \circ
S)$.

}\end{notn}

\begin{defs}[of additive categories $\cc A(U,X)$]\label{ccAobjects}{\rm
For any $U,X\in \text{Sm}/k$ we define objects of $\cc A(U,X)$ as
equivalence classes of the triples
   $$(n,Z,\phi:p_{U,*}(\cc O_Z)\to M_n(\cc O_U)),$$
where $n$ is a nonnegative integer, $Z \in \text{SubSch}(U\times
X/X)$ and $\phi$ is a non-unital homomorphism of sheaves of $\cc
O_U$-algebras. Let $p(\phi)$ be the idempotent $\phi(1)\in
M_n(\Gamma(U,\cc O_U))$, then $P(\phi)=\im(p(\phi))$ can be regarded
as a $p_{U,*}(\cc O_Z)$-module by means of $\phi$.

By definition, two triples $(n,Z,\phi)$, $(n',Z',\phi')$ are
equivalent if $n=n'$ and there is a triple $(n'',Z'',\phi'')$ such
that $n=n'=n''$, $Z,Z'\subset Z''$ are closed subschemes in $Z''$,
and the diagrams
   \begin{equation*}
    \xymatrix{p_{U,*}(\cc O_Z)\ar[rr]^\phi&&M_n(\cc O_U)&p_{U,*}(\cc O_{Z'})\ar[rr]^{\phi'}&&M_n(\cc O_U)\\
              &p_{U,*}(\cc O_{Z''})\ar[ul]^{can}\ar[ur]_{\phi''}&&&p_{U,*}(\cc O_{Z''})\ar[ul]^{can}\ar[ur]_{\phi''}}
   \end{equation*}
are commutative. We shall often denote an equivalence class for the
triples by $\Phi$. Though $Z$ is not uniquely defined by $\Phi$,
nevertheless we shall also refer to $Z\subset U\times X$ as the {\it
support\/} of $\Phi$.

Given $\Phi,\Phi'\in\cc A(U,X)$ we first equalize supports $Z,Z'$ of
the objects $\Phi,\Phi'$ and then set
   $$\Hom_{\cc A(U,X)}(\Phi,\Phi')=\Hom_{p_{U,*}(\cc O_Z)}(P(\phi),P(\phi')),$$
where the right hand side is an Abelian group in the usual way.
Given any three objects $\Phi,\Phi', \Phi'' \in\cc A(U,X)$ a
composition law
   $$\Hom_{\cc A(U,X)}(\Phi,\Phi') \circ \Hom_{\cc A(U,X)}(\Phi',\Phi'') \to \Hom_{\cc A(U,X)}(\Phi,\Phi'')$$
is defined in the obvious way. This makes therefore $\cc A(U,X)$ an
additive category. The zero object is the equivalence class of the
triple $(0, \emptyset, 0)$. By definition,\footnotesize
   $$\Phi_1\oplus\Phi_2=(n_1+n_2,Z_1\cup Z_2,p_{U,*}(\cc O_{Z_1\cup Z_2})\to p_{U,*}(\cc O_{Z_1})\times p_{U,*}(\cc O_{Z_2})
     \to M_{n_1}(\cc O_U))\times M_{n_2}(\cc O_U))\hookrightarrow M_{n_1+n_2}(\cc O_U)).$$
\normalsize Clearly, $P(\phi_1\oplus\phi_2)\cong P(\phi_1)\oplus
P(\phi_2)$. Definition of the additive category $\cc A(U,X)$ is
finished.

}\end{defs}

We now want to construct a bilinear pairing
   \begin{equation}\label{luu}
    \cc A(V,U)\times\cc A(U,X)\lra{\circ}\cc A(V,X),\quad U,V,X\in \text{Sm}/k.
   \end{equation}
First, define it on objects. Namely,
   $$((n_1,Z_1,\phi_1),(n_2,Z_2,\phi_2))\longmapsto(n_1n_2,Z_2\circ Z_1,\phi_2\circ\phi_1),$$
where
$Z_2\circ Z_1 \in \text{SubSch}(V\times X/X)$ is a closed subscheme of $V\times X$ defined in
Notation \ref{ZcircS}.
The nonunital homomorphism
$\phi_2\circ\phi_1: p^{(Z_2 \circ Z_1)}_{V,*}(\cc O_{Z_2\circ Z_1}) \to M_{n_2n_1}(\cc O_V)$
is given by the composition
   \begin{equation}\label{composition}
    \xymatrix{&&M_{n_2}(M_{n_1}(\cc O_V))\ar[r]^{L}_{\cong}&M_{n_2n_1}(\cc O_V)\\
              q_{V,*}(\cc O_{Z_1\times_U Z_2})=p_{V,*}(p_{Z_1,*}(\cc O_{Z_1\times_U Z_2}))\ar[rr]^(.63){p_{V,*}(\phi_{2,Z_1})}
              &&M_{n_2}(p_{V,*}(\cc O_{Z_1}))\ar[u]_{M_{n_2}(\phi_1)}\\
              p^{(Z_2 \circ Z_1)}_{V,*}(\pi_*(\cc O_{Z_1\times_U Z_2}))\ar@{=}[u]\\
              p^{(Z_2 \circ Z_1)}_{V,*}(\cc O_{Z_2\circ Z_1})\ar[u]^{p^{Z_2 \circ Z_1}_{V,*}(\pi^*)=can'}}
   \end{equation}
where $L$ is a canonical isomorphism obtained by inserting
$(n_1,n_1)$-matrices into entries of a $(n_2,n_2)$-matrix, the
diagrams
   $$\xymatrix{Z_1\times_UZ_2\ar@/_1pc/[dd]_{q_V}\ar[d]^{p_{Z_1}}\ar[r]&Z_2\ar[d]_{p_U}\ar[r]&X&Z_1\times_UZ_2\ar[dd]_{q_V}\ar[dr]\ar[drr]^\pi\\
               Z_1\ar[d]^{p_V}\ar[r]_{r}&U&&&V\times X\ar[dl]&\ar@{ >->}[l]Z_2\circ Z_1\ar[dll]^{p^{Z_2 \circ Z_1}_V}\\
               V&&&V}$$
are commutative, and
$\pi^{*}:\cc O_{Z_2\circ Z_1}\to\pi_*(\cc O_{Z_1\times_UZ_2})$
is induced by the scheme morphism
$\pi: Z_1\times_UZ_2 \to Z_2\circ Z_1$
from Notation \ref{ZcircS}.
Finally,
$\phi_{2,Z_1}: p_{Z_1,*}(\cc O_{Z_1\times_U Z_2}) \to M_{n_2}(\cc O_{Z_1})$
is defined as a unique non-unital homomorphism
of sheaves of $\cc O_{Z_1}$-algebras such that
for any open affine $U' \subset U$ and any open affine $Z'_1 \subset
Z_1$ with $r(Z'_1) \subset U'$ and $Z'_2=p^{-1}_U(U')$ the value of
$\phi_{2,Z_1}$ on $Z'_1$ coincides with the non-unital homomorphism
of $k[Z'_1]$-algebras
   $$k[Z'_1]\otimes_{k[U']} k[Z'_2] \xrightarrow{\id\otimes\phi_2}k[Z'_1]\otimes_{k[U']} M_{n_2}(k[U'])
     \xrightarrow{a \otimes \beta \mapsto a\cdot r^*(\beta)} M_{n_2}(k[Z'_1]).$$
For a future use set $p(\phi_{2,Z_1})=\phi_{2,Z_1}(1) \in
M_{n_2}(\Gamma(Z_1, \cc O_{Z_1}))$ and
$P(\phi_{2,Z_1})=\im[p(\phi_{2,Z_1}): \cc O^{n_2}_{Z_1} \to \cc
O^{n_2}_{Z_1}]$.

In order to define pairing~\eqref{luu} on morphisms, we need some
preparations. Let $\Phi_1\in\cc A(V,U)$ and $\Phi_2\in\cc A(U,X)$.
Consider the diagram
   $$\xymatrix{p_{V,*}(\cc O^{n_2}_{Z_1})\otimes_{p_{V,*}(\cc O_{Z_1})}P(\phi_1)
               \ar[r]^(.7){can}_(.7){\cong}&P(\phi_1)^{n_2}\ar@{ >->}[r]^{i_1}&(\cc O^{n_1}_V)^{n_2}
               \ar[r]^{\ell}_\cong&\cc O^{n_2n_1}_V\ar@<1ex>[d]^{p(\phi_2\circ\phi_1)}\\
               p_{V,*}(P(\phi_{2,Z_1}))\otimes_{p_{V,*}(\cc O_{Z_1})}P(\phi_1)\ar@{ >->}[u]^{p_{V,*}(i_{2,Z_1})\otimes\id}\ar[rrr]^{\sigma_{12}}
               &&&P(\phi_2\circ\phi_1),\ar@<1ex>[u]^{i(\phi_2\circ\phi_1)}}$$
where $\sigma_{12}=p(\phi_2\circ\phi_1)\circ\ell\circ i_1\circ
can\circ(i_{2,Z_1}\otimes\id)$ (here $\ell (e_{i,j})=
e_{i+(j-1)n_1}$). It is worth to note that the isomorphism $\ell$
induces an $\cc O_V$-algebra isomorphism $M_{n_2}(M_{n_1}(\cc O_V))
\cong M_{n_2n_1}(\cc O_V)$ which coincides with the canonical
isomorphism $L$ obtained by inserting $(n_1,n_1)$-matrices into
entries of a $(n_2,n_2)$-matrix.

\begin{defs}\label{TwoModules}{\rm
An $p^{(Z_2 \circ Z_1)}_{V,*}(\cc O_{Z_2\circ Z_1})$-module
structure on $p_{V,*}(P(\phi_{2,Z_1}))\otimes_{p_{V,*}(\cc
O_{Z_1})}P(\phi_1)$ is defined as follows. For any open $V^0 \subset
V$, $f \in \Gamma(V^0, p^{(Z_2 \circ Z_1)}_{V,*}(\mathcal
O_{Z_2\circ Z_1}))$, $m_1 \in \Gamma(V^0, P(\phi_1))$, and $m_2 \in
\Gamma(V^0, p_{V,*}(P(\phi_{2,Z_1})))$
set
   $$f(m_2\otimes m_1)=((p_{V,*}(\phi_{2,Z_1})\circ can')(f))(m_2) \otimes m_1.$$
An $p^{(Z_2 \circ Z_1)}_{V,*}(\cc O_{Z_2\circ Z_1})$-module
structure on $P(\phi_2\circ\phi_1)$ is defined as follows. For any
open $V^0 \subset V$, $f \in \Gamma(V^0, p^{(Z_2 \circ
Z_1)}_{V,*}(\mathcal O_{Z_2\circ Z_1}))$, and $m \in \Gamma(V^0,
P(\phi_2\circ\phi_1))$ set
   $$fm=((\phi_2 \circ \phi_1)(f))(m).$$
In particular,
   $$1 \cdot m=((\phi_2 \circ \phi_1)(1))(m)=p(\phi_2\circ\phi_1)(m) =m,$$
because $m\in\im(p(\phi_2\circ\phi_1))$.

}\end{defs}

\begin{lem}
The map $\sigma_{12}$ is an isomorphism of $\cc O_V$-modules and,
moreover, an isomorphism of the
$p^{(Z_2 \circ Z_1)}_{V,*}(\cc O_{Z_2\circ Z_1})$-modules.
\end{lem}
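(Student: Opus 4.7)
The plan is to identify both source and target of $\sigma_{12}$ as images of the \emph{same} idempotent under different embeddings into the common ambient module $\cc O_V^{n_2n_1}$, and then to verify module compatibility by a direct formula comparison.

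Working locally, the canonical isomorphism $can\colon p_{V,*}(\cc O_{Z_1}^{n_2}) \otimes_{p_{V,*}(\cc O_{Z_1})} P(\phi_1) \cong P(\phi_1)^{n_2}$ identifies the source $p_{V,*}(P(\phi_{2,Z_1})) \otimes_{p_{V,*}(\cc O_{Z_1})} P(\phi_1)$ with the image of the idempotent $p(\phi_{2,Z_1}) \otimes \id$. Writing $p(\phi_{2,Z_1}) = (a_{ij}) \in M_{n_2}(p_{V,*}(\cc O_{Z_1}))$, this image is the set of tuples $(m_i) \in P(\phi_1)^{n_2}$ satisfying $m_i = \sum_j \phi_1(a_{ij})(m_j)$, where the $p_{V,*}(\cc O_{Z_1})$-action on $P(\phi_1)$ is the one induced by $\phi_1$. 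On the target side, unraveling diagram~\eqref{composition} gives $p(\phi_2\circ\phi_1) = L(M_{n_2}(\phi_1)(p(\phi_{2,Z_1}))) = L((\phi_1(a_{ij})))$, and via $\ell^{-1}$ this idempotent acts on $(\cc O_V^{n_1})^{n_2}$ by the \emph{same} formula $(v_i) \mapsto (\sum_j \phi_1(a_{ij})(v_j))_i$, but on a larger ambient module. The decisive observation is that $\phi_1(a) = \phi_1(1\cdot a) = p(\phi_1)\phi_1(a)$ for every $a \in p_{V,*}(\cc O_{Z_1})$, so $\phi_1(a)$ always factors through $P(\phi_1)$; hence $p(\phi_2\circ\phi_1)$ maps $(\cc O_V^{n_1})^{n_2}$ into $P(\phi_1)^{n_2}$, and its image $P(\phi_2\circ\phi_1)$ coincides with the image of the same idempotent restricted to $P(\phi_1)^{n_2}$, which is the source. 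The $\cc O_V$-linear part then follows once one checks, tracing through $\ell \circ i_1 \circ can$, that $\sigma_{12}$ is precisely this identification; equivalently, a short calculation shows $\sigma_{12}(m_2\otimes m_1) = \ell\bigl((\phi_1(m_2^i)(m_1))_i\bigr)$, using $p(\phi_{2,Z_1})\cdot m_2 = m_2$ to see that $p(\phi_2\circ\phi_1)$ fixes this tuple.

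For linearity over $p^{(Z_2\circ Z_1)}_{V,*}(\cc O_{Z_2\circ Z_1})$, I would compute both sides on a section. Let $A = (b_{ij}) := (p_{V,*}(\phi_{2,Z_1}) \circ can')(f)$. Then $f\cdot(m_2 \otimes m_1) = A(m_2) \otimes m_1$, and applying $\sigma_{12}$ yields a vector with entries $\phi_1((Am_2)^i)(m_1) = \sum_j \phi_1(b_{ij})\phi_1(m_2^j)(m_1)$, by multiplicativity of $\phi_1$. On the other hand $f$ acts on $\sigma_{12}(m_2\otimes m_1)$ by multiplication with $L(M_{n_2}(\phi_1)(A))$, which a direct matrix computation shows produces the same entries. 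The main obstacle throughout is strictly bookkeeping: aligning the identifications $can$, $can'$, $\ell$, $L$, and $\pi_*$ so that the two formulas for the idempotent action match on the nose, and confirming that the action built via $can'$ on the source corresponds to the one built via $\phi_2\circ\phi_1$ on the target. Once these alignments are settled, both claims reduce to multiplicativity of $\phi_1$ together with the defining property of idempotents.
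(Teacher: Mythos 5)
The paper states this lemma without proof, so there is no argument to compare against; your verification is correct and is the natural (presumably intended) one. You rightly identify both sides of $\sigma_{12}$ with the image of the single idempotent $p(\phi_2\circ\phi_1)=L\bigl(M_{n_2}(\phi_1)(p(\phi_{2,Z_1}))\bigr)$ — the two key points being that $\phi_1(a)=p(\phi_1)\phi_1(a)$ forces this idempotent to land inside $\ell\bigl(P(\phi_1)^{n_2}\bigr)$, and that an idempotent has the same image whether computed on the ambient module or on any intermediate submodule containing that image — and the $p^{(Z_2\circ Z_1)}_{V,*}(\cc O_{Z_2\circ Z_1})$-linearity indeed reduces, via your explicit formula $\sigma_{12}(m_2\otimes m_1)=\ell\bigl((\phi_1(m_2^i)(m_1))_i\bigr)$, to multiplicativity of $\phi_1$ together with the stated compatibility of $L$ with $\ell$.
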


Let $\alpha_1:\Phi_1\to\Phi_1'$ and $\alpha_2:\Phi_2\to\Phi_2'$ be
morphism in $\cc A(V,U)$ and $\cc A(U,X)$ respectively. We set
   \begin{equation}\label{oleg}
    \alpha_2\odot\alpha_1=\sigma_{12}'\circ(\alpha_2\otimes\alpha_1)\circ\sigma_{12}^{-1}:P(\phi_2\circ\phi_1)
    \to P(\phi_2'\circ\phi_1').
   \end{equation}
The definition of pairing~\eqref{luu} is finished. It is defined on
objects above and on morphisms by formula~\eqref{oleg}.

\begin{lem}\label{hoho}
The functor $\cc A(V,U)\times\cc A(U,X)\lra{\circ}\cc A(V,X)$ is
bilinear for all $U,V,X\in \text{Sm}/k$.
\end{lem}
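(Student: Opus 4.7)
The plan is to reduce bilinearity of $\circ$ on morphisms to the bilinearity of the tensor product of module homomorphisms via the defining formula~\eqref{oleg}. Compatibility with direct sums on objects, i.e.\ $(\Phi_1\oplus\Phi_1')\circ\Phi_2\cong(\Phi_1\circ\Phi_2)\oplus(\Phi_1'\circ\Phi_2)$ and the analogous statement in the second variable, then follows from the explicit description of $\oplus$ in $\cc A(-,-)$ as block-diagonal idempotents supported on the union of the respective supports, together with the functorial behaviour of $(S,Z)\mapsto Z\circ S$ with respect to unions of closed subschemes. So the content is additivity on each hom group.

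First I would fix $\Phi_1,\Phi_1'\in\cc A(V,U)$ (with a common support $Z_1$, which can always be arranged via the equivalence relation in Definition~\ref{ccAobjects}) and $\Phi_2,\Phi_2'\in\cc A(U,X)$ (with common support $Z_2$). The crucial observation is that the isomorphisms $\sigma_{12}$ and $\sigma_{12}'$ appearing in~\eqref{oleg} depend only on the endpoint objects $(\Phi_1,\Phi_2)$ and $(\Phi_1',\Phi_2')$, not on any choice of morphisms between them. Consequently the assignment
\[
(\alpha_1,\alpha_2)\longmapsto \alpha_2\odot\alpha_1=\sigma_{12}'\circ(\alpha_2\otimes\alpha_1)\circ\sigma_{12}^{-1}
\]
is bilinear iff $(\alpha_1,\alpha_2)\mapsto\alpha_2\otimes\alpha_1$ is, because pre- and post-composition with fixed morphisms are additive operations on the relevant hom groups of $\cc O_V$-modules.

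It then remains to verify bilinearity of the tensor product pairing sending $(\alpha_1,\alpha_2)$ to $\alpha_{2,Z_1}\otimes\alpha_1$, where $\alpha_{2,Z_1}:P(\phi_{2,Z_1})\to P(\phi'_{2,Z_1})$ is the base change of $\alpha_2$ appearing in the upper row of the diagram preceding Definition~\ref{TwoModules}. Bilinearity of the standard tensor product of module homomorphisms over $p_{V,*}(\cc O_{Z_1})$ is classical, so the only point to check is that $\alpha_2\mapsto\alpha_{2,Z_1}$ is itself additive. This is immediate from the local description: on an affine open $Z_1'\subset Z_1$ mapping to an affine $U'\subset U$, the base change carries $\alpha_2|_{U'}$ to $\mathrm{id}_{k[Z_1']}\otimes_{k[U']}\alpha_2|_{U'}$, and tensoring with a fixed identity is additive in the homomorphism.

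I do not expect a genuine obstacle here: the lemma is essentially a formal consequence of the fact that $\otimes$ is bilinear, plus the observation that the $\sigma$-isomorphisms from the preceding lemma are independent of morphisms. The only mild subtlety worth recording in a full write-up is additivity of the base-change step $\alpha_2\mapsto\alpha_{2,Z_1}$, which reduces on affines to tensoring with an identity map.
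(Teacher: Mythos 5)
The paper states Lemma~\ref{hoho} without any proof, and your argument is exactly the intended one: since $\sigma_{12}$ and $\sigma_{12}'$ are built solely from the objects $(\Phi_1,\Phi_2)$ and $(\Phi_1',\Phi_2')$, additivity of $\odot$ in each variable reduces via~\eqref{oleg} to bilinearity of the tensor product of module homomorphisms over $p_{V,*}(\cc O_{Z_1})$ together with additivity of the base change $\alpha_2\mapsto\alpha_{2,Z_1}$, which you correctly check on affines. One small economy: once additivity on hom-groups is established, compatibility with $\oplus$ on objects is automatic for a functor between additive categories, so your opening paragraph about block-diagonal idempotents and unions of supports is not logically required.
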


For any $X\in \text{Sm}/k$ we define an object $\id _X\in\Ob\cc
A(X,X)$ by
   $$\id_X=(1,\Delta_X,\id:\cc O_X\to\cc O_X).$$

\begin{lem}\label{hohoho}
For any $U,X\in \text{Sm}/k$ the functors $\{\id_U\}\times\cc
A(U,X)\lra{\circ}\cc A(U,X)$ and $\cc
A(U,X)\times\{\id_X\}\lra{\circ}\cc A(U,X)$ are identities on $\cc
A(U,X)$.
\end{lem}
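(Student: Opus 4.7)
The plan is to check both $\Phi \circ \id_X = \Phi$ and $\id_U \circ \Phi = \Phi$ by unpacking the construction of the pairing \eqref{luu} and exploiting the equivalence relation on triples from Definition \ref{ccAobjects}. I would begin with the right-unit case, writing $\Phi = (n, Z, \phi) \in \cc A(U,X)$ and $\id_X = (1, \Delta_X, \id)$, and computing the composition triple $(n \cdot 1, \Delta_X \circ Z, \phi \circ \id)$. Set-theoretically $\Delta_X^{red} \circ Z^{red} = p_{UX}(U \times \Delta_X \cap Z \times X) = Z^{red}$; since $\Delta_X$ is reduced, the fibre product $Z \times_X \Delta_X$ is canonically isomorphic to $Z$, so the map $\pi_k$ required by Notation \ref{ZcircS} exists for any $k$ large enough that $Z \subset (Z^{red})_k$ as a closed subscheme of $U \times X$. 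Choosing such a $k$, the two subschemes $Z$ and $(Z^{red})_k$ both embed in the common thickening $(Z^{red})_k$, which plays the role of the third subscheme $Z''$ in the equivalence relation of Definition \ref{ccAobjects}. The left-unit case is entirely analogous with $Z_1 = \Delta_U$ and $\Delta_U \times_U Z \cong Z$.

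The second step is to verify that the structural homomorphism $\phi_2 \circ \phi_1$ obtained from diagram \eqref{composition} agrees, on the common thickening, with the canonical extension of $\phi$. Tracing \eqref{composition} with $\phi_2 = \id$, the auxiliary map $\phi_{2, Z_1}: p_{Z,*}(\cc O_{Z \times_X \Delta_X}) \to M_1(\cc O_Z) = \cc O_Z$ becomes, via $Z \times_X \Delta_X \cong Z$, the identity on $\cc O_Z$; the canonical isomorphism $L: M_n(M_1(\cc O_U)) \xrightarrow{\cong} M_n(\cc O_U)$ is tautological, so the full composite collapses to $\phi$ itself. In the left-unit case $\phi_1 = \id$, and the symmetric argument shows that $\phi_{2, \Delta_U}$ becomes $\phi$ under $\Delta_U \cong U$, while the relevant $L$ is again tautological. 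Together with the support analysis from the first step, this identifies the composition triple with $(n, Z, \phi)$ in $\cc A(U, X)$.

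For morphisms I would invoke formula \eqref{oleg}. Taking $\alpha_1 = \id_{\id_U}$ in the left-unit case (or $\alpha_2 = \id_{\id_X}$ in the right-unit case), the isomorphisms $\sigma_{12}$ and $\sigma_{12}'$ from the unnamed lemma preceding \eqref{oleg} reduce, under the identifications of the previous paragraph, to the canonical isomorphism between $P(\phi)$ and its tensor product with the structure sheaf of $\Delta_U$ (respectively $\Delta_X$). Conjugating $\id \otimes \alpha$ or $\alpha \otimes \id$ by these canonical isomorphisms yields $\alpha$ itself, confirming that the functors act as the identity on morphisms. The main obstacle is not conceptual but bookkeeping: the scheme-theoretic meaning of $Z_2 \circ Z_1$ as a thickening $(Z_2^{red} \circ Z_1^{red})_k$ requires a careful choice of $k$ so that $\pi_k$ exists and the thickening contains the original $Z$, after which the equivalence relation of Definition \ref{ccAobjects} absorbs all remaining discrepancies.
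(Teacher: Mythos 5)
The paper states this lemma without proof, treating it as a routine verification, and your direct unwinding of the definitions --- the identifications $Z\times_X\Delta_X\cong Z$ and $\Delta_U\times_U Z\cong Z$, the collapse of diagram \eqref{composition} when one of $\phi_1,\phi_2$ is the identity (so that $\phi_{2,Z_1}$ and $L$ become tautological and the composite reduces to $\phi$ precomposed with the canonical surjection from a thickening $(Z^{red})_k$), and the appeal to the equivalence relation of Definition \ref{ccAobjects} together with the reduction of $\sigma_{12}$ to the canonical isomorphism on morphisms --- is exactly the intended argument and is correct. (Only a notational slip: with the paper's convention for the pairing \eqref{luu}, the composite coming from $\cc A(U,X)\times\{\id_X\}$ is written $\id_X\circ\Phi$, not $\Phi\circ\id_X$.)
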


\begin{lem}\label{nono}
For any $U,V,W,X\in \text{Sm}/k$ and any $\Phi_1\in\cc
A(W,V),\Phi_2\in\cc A(V,U),\Phi_3\in\cc A(U,X)$ the following
statements are true:
\begin{enumerate}
\item $\Phi_3\circ(\Phi_2\circ\Phi_1)=(\Phi_3\circ\Phi_2)\circ\Phi_1\in\Ob\cc
A(W,X)$;

\item
$p(\phi_3\circ(\phi_2\circ\phi_1))=p((\phi_3\circ\phi_2)\circ\phi_1)$
and
$P(\phi_3\circ(\phi_2\circ\phi_1))=P((\phi_3\circ\phi_2)\circ\phi_1)$;

\item suppose $\alpha_i:\Phi_i\to\Phi_i'$ are morphisms $(i=1,2,3)$, then
$\alpha_3\odot(\alpha_2\odot\alpha_1)=(\alpha_3\odot\alpha_2)\odot\alpha_1\in\Hom_{\cc
A(W,X)}(P(\phi_3\circ(\phi_2\circ\phi_1)),P((\phi_3'\circ\phi_2')\circ\phi_1'))$.
\end{enumerate}
\end{lem}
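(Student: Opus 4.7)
For (1), one checks that $(Z_3^{red}\circ Z_2^{red})\circ Z_1^{red}=Z_3^{red}\circ(Z_2^{red}\circ Z_1^{red})$ as subsets of $W\times X$: both equal the image under the projection $p_{WX}\colon W\times V\times U\times X\to W\times X$ of the triple intersection $(Z_1\times U\times X)\cap(W\times Z_2\times X)\cap(W\times V\times Z_3)$. By Notation~\ref{ZcircS}, each of $(Z_3\circ Z_2)\circ Z_1$ and $Z_3\circ(Z_2\circ Z_1)$ has the form $(Z_3^{red}\circ Z_2^{red}\circ Z_1^{red})_k$ for some integer $k$ depending on the bracketing. Choosing a common $k$ large enough to dominate both values, the equivalence relation in Definition~\ref{ccAobjects} identifies the two triples in $\cc A(W,X)$.

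For (2), expand each of $(\phi_3\circ\phi_2)\circ\phi_1$ and $\phi_3\circ(\phi_2\circ\phi_1)$ by inserting the composition diagram~\eqref{composition} into itself. Both produce a single non-unital homomorphism $p_{W,*}(\cc O_{Z_3\circ Z_2\circ Z_1})\to M_{n_3n_2n_1}(\cc O_W)$, built from two ingredients: (a) the canonical entry-insertion isomorphism $L$, which yields the same isomorphism $M_{n_3}(M_{n_2}(M_{n_1}(\cc O_W)))\cong M_{n_3n_2n_1}(\cc O_W)$ under either bracketing because entry-insertion is strictly associative; and (b) the auxiliary homomorphisms $\phi_{i,Z_j}$, which are characterised locally by base change from $\phi_i$ and are therefore compatible with both chains of fibre products $W\leftarrow Z_1\leftarrow Z_1\times_V Z_2\leftarrow Z_1\times_V Z_2\times_U Z_3$ obtained from the two bracketings. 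Tracking the two compositions through these ingredients, they agree. Evaluating at $1$ gives equality of idempotents, and passing to images gives $P(\phi_3\circ(\phi_2\circ\phi_1))=P((\phi_3\circ\phi_2)\circ\phi_1)$.

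For (3), introduce the iterated isomorphism
$$\sigma_{123}\colon p_{W,*}(P(\phi_{3,Z_2\circ Z_1}))\otimes_{p_{W,*}(\cc O_{Z_2\circ Z_1})}p_{W,*}(P(\phi_{2,Z_1}))\otimes_{p_{W,*}(\cc O_{Z_1})}P(\phi_1)\xrightarrow{\cong}P(\phi_3\circ\phi_2\circ\phi_1),$$
built by applying the $\sigma$-isomorphism lemma (the unnamed lemma preceding~\eqref{oleg}) twice, with any grouping. The same lemma, applied along each of the two bracketings of $\alpha_3\odot\alpha_2\odot\alpha_1$, identifies both composites with $\sigma_{123}'\circ(\alpha_3\otimes\alpha_2\otimes\alpha_1)\circ\sigma_{123}^{-1}$. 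The identification uses (2) to guarantee that the source and target idempotents match, and the strict associativity of $\otimes$ over the common base sheaves to reconcile the bracketings of the tensor factors.

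The main obstacle is (2): verifying that two a priori different compositions of sheaf-level data along different chains of fibre products produce the same non-unital homomorphism. The verification is not purely formal because the auxiliary homomorphisms $\phi_{i,Z_j}$ and the matrix associator $L$ must be shown compatible simultaneously, so one has to unwind~\eqref{composition} twice and compare carefully the iterated pushforwards $p_{W,*}$, the canonical maps $can'$, and the comparison morphisms $\pi$ from Notation~\ref{ZcircS}. Once (2) is settled, (1) follows at once from the equivalence in Definition~\ref{ccAobjects} and (3) reduces to the associativity of tensor product together with the naturality built into the definition of $\sigma_{ij}$.
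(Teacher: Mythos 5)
The paper states Lemma~\ref{nono} without proof (as with the surrounding Lemmas~\ref{hoho}--\ref{hirsch}, the verification is left to the reader as routine), so there is no argument of the authors to compare yours against; I can only assess your sketch on its own terms. Your outline is correct and identifies the right ingredients: associativity of the support operation $B\circ A$ via the triple intersection in $W\times V\times U\times X$, the need to pass to a common infinitesimal thickening $(Z_3^{red}\circ Z_2^{red}\circ Z_1^{red})_k$ and invoke the equivalence relation of Definition~\ref{ccAobjects} (so that (1) really is an equality of equivalence classes, not of triples), the strict associativity of the block-insertion isomorphism $L$ together with base-change compatibility of the $\phi_{i,Z_j}$ for (2), and the reduction of (3) to a bracket-independent $\sigma_{123}$. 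You also correctly flag (2) as the crux. Two points remain asserted rather than checked and would need to be written out in a complete proof: in (2), the compatibility of the two chains of morphisms $\pi$ and $can'$ from Notation~\ref{ZcircS} under the two bracketings (this is where the ``for some $k\gg 0$'' in the definition of $Z\circ S$ must be handled uniformly); and in (3), the claim that the two ways of building $\sigma_{123}$ from the two-fold $\sigma$'s coincide, which is the same kind of diagram chase and should not be waved off as ``strict associativity of $\otimes$'' (the tensor product is associative only up to the canonical isomorphism, and one must check that this canonical isomorphism is absorbed by the explicit maps into $P(\phi_3\circ\phi_2\circ\phi_1)$). Neither point is a wrong turn; they are exactly the tedious verifications the authors chose to omit.
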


\begin{prop}\label{nonono}
For any $U,V,W,X\in \text{Sm}/k$ the diagram of
functors\footnotesize
   $$\xymatrix{&(\cc A(W,V)\times\cc A(V,U))\times\cc A(U,X)\ar[r]^(.585){\circ\times\id}&\cc A(W,U)\times\cc A(U,X)\ar[dd]_{\circ}\\
               \cc A(W,V)\times(\cc A(V,U)\times\cc A(U,X))\ar[ur]^{\cong}\ar[dr]_{\id\times\circ}\\
               &\cc A(W,V)\times\cc A(V,X)\ar[r]^{\circ}&\cc A(W,X)}$$
\normalsize is strictly commutative.
\end{prop}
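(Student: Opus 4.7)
The plan is to deduce Proposition~\ref{nonono} directly from Lemma~\ref{nono} together with the bilinearity of $\circ$ recorded in Lemma~\ref{hoho}. Two functors out of a product category coincide exactly when they agree on all objects and on all morphisms, so the strategy is to check each diagram path produces literally the same data in both slots, and then to invoke bilinearity to propagate this from ``elementary'' arguments to sums of morphisms.

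First I would check equality on objects. Fix $\Phi_1\in\cc A(W,V)$, $\Phi_2\in\cc A(V,U)$, $\Phi_3\in\cc A(U,X)$. By Lemma~\ref{nono}(1) we have $\Phi_3\circ(\Phi_2\circ\Phi_1)=(\Phi_3\circ\Phi_2)\circ\Phi_1$ as objects of $\cc A(W,X)$, and Lemma~\ref{nono}(2) sharpens this to the strict equality of the underlying idempotents $p$ and of their image modules $P$. Consequently the two composite bifunctors send the triple $(\Phi_1,\Phi_2,\Phi_3)$ to literally the same triple $(n_1n_2n_3,\,Z_3\circ Z_2\circ Z_1,\,\phi)$ with the same $P$, in particular living in the same $\Hom$-groups of $\cc A(W,X)$.

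Next I would check equality on morphisms. Pairing~\eqref{luu} was defined on morphisms by formula~\eqref{oleg} in terms of $\odot$, so for $\alpha_i:\Phi_i\to\Phi_i'$ Lemma~\ref{nono}(3) is exactly the identity $\alpha_3\odot(\alpha_2\odot\alpha_1)=(\alpha_3\odot\alpha_2)\odot\alpha_1$ of the two composite bifunctors on an elementary triple of morphisms. Lemma~\ref{hoho}, applied to each instance of $\circ$ along each of the two paths, guarantees that both composites are trilinear in their three arguments, so agreement on elementary triples extends to arbitrary elements of the relevant $\Hom$-groups. Since the canonical associator $(\cc A(W,V)\times\cc A(V,U))\times\cc A(U,X)\xrightarrow{\cong}\cc A(W,V)\times(\cc A(V,U)\times\cc A(U,X))$ appearing in the diagram is the identity on objects and morphisms (it merely rebrackets tuples), the two paths of the hexagon agree strictly.

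The main obstacle is really hidden in Lemma~\ref{nono} itself. Its part~(2) rests on the compatibility of the matrix-insertion isomorphism $L\colon M_{n_3}(M_{n_2}(M_{n_1}(\cc O_W)))\to M_{n_3n_2n_1}(\cc O_W)$: one must verify that first grouping $(n_1,n_2)$ and then combining with $n_3$ yields the same lexicographic re-indexing of basis vectors as first grouping $(n_2,n_3)$ and then combining with $n_1$. Part~(3) additionally demands a coherence statement for the isomorphisms $\sigma_{ij}$ of Definition~\ref{TwoModules} when applied to twofold tensor products in two different orders, which is the substantive diagram chase that really powers the ``strict'' commutativity of the proposition.
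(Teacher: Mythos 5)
Your proposal is correct and matches the paper's (implicit) route: the paper states Proposition~\ref{nonono} without proof, treating it as an immediate repackaging of Lemma~\ref{nono}, exactly as you do --- parts (1)--(2) give agreement on objects, part (3) gives agreement on morphisms, and the associator is the identity on underlying data. One small redundancy: a morphism in the product category $(\cc A(W,V)\times\cc A(V,U))\times\cc A(U,X)$ is already literally a triple $(\alpha_1,\alpha_2,\alpha_3)$, so Lemma~\ref{nono}(3) covers every morphism outright and no appeal to bilinearity (Lemma~\ref{hoho}) is needed to ``extend to sums.''
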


\begin{lem}\label{hirsch}
Pairings~\eqref{luu} together with $\{\id_X\}_{X\in \text{Sm}/k}$
determine a category $\cc A$ on $\text{Sm}/k$ which is also enriched
over additive categories. Moreover, the rules $X\mapsto X$ and
$f\mapsto\Phi_f=(1,\Gamma_f,\id:\cc O_U\to\cc O_U)$ give a functor
$\sigma:\text{Sm}/k\to\cc A$.
\end{lem}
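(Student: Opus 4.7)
The plan is to assemble the category structure of $\cc A$ from the lemmas and proposition that have already been stated, and then verify directly that the graph-based assignment respects composition. There is essentially no new conceptual input required: every axiom needed to make $\cc A$ a category enriched over additive categories has been isolated in a preceding statement, and the functoriality of $\sigma$ reduces to the observation that the graph of a composite is obtained by composing the graphs in the sense of Notation~\ref{ZcircS}.

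First I would verify the category axioms. Associativity of the pairings $\circ$ on objects follows from Lemma~\ref{nono}(1) together with Proposition~\ref{nonono} (strict commutativity of the pentagon on objects), and on morphisms it is exactly Lemma~\ref{nono}(3). The unit laws for $\id_X=(1,\Delta_X,\id)$ are provided by Lemma~\ref{hohoho}. Together these give the data of a category with object set $\Ob(Sm/k)$, morphism sets $\cc A(U,X)$, composition $\circ$ and units $\id_X$. The enrichment over additive categories then amounts to three observations: each $\cc A(U,X)$ is additive by the construction given right after Definition~\ref{ccAobjects} (zero object $(0,\emptyset,0)$ and direct sum via the indicated block-diagonal formula); composition is bilinear by Lemma~\ref{hoho}; and the units $\id_X$ respect the additive structure since the corresponding pairings $\{\id_U\}\times\cc A(U,X)\to\cc A(U,X)$ and $\cc A(U,X)\times\{\id_X\}\to\cc A(U,X)$ are identity functors, hence \emph{a fortiori} additive.

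Next I would verify that $\sigma:Sm/k\to\cc A$, $X\mapsto X$, $f\mapsto\Phi_f=(1,\Gamma_f,\id:\cc O_U\to\cc O_U)$ (for $f:U\to X$), is a functor. Preservation of units is immediate since $\Gamma_{\id_X}=\Delta_X$, so $\Phi_{\id_X}=\id_X$ by definition. For composition, let $f:V\to U$ and $g:U\to X$. One computes that
\[
\Gamma_g\circ\Gamma_f = p_{VX}(V\times\Gamma_g\cap\Gamma_f\times X) = \Gamma_{g\circ f}
\]
as subsets of $V\times X$, because $V\times\Gamma_g\cap\Gamma_f\times X=\{(v,f(v),g(f(v)))\}$ projects isomorphically to $\Gamma_{g\circ f}$. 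Moreover the map $\pi$ of Notation~\ref{ZcircS} is in this case an isomorphism (both source and target are isomorphic to $V$ via the obvious projection), so $Z_2\circ Z_1=\Gamma_{g\circ f}$ as subschemes. The non-unital algebra homomorphism $\id\circ\id$ built in~\eqref{composition} is then, after unwinding the canonical isomorphism $L:M_1(M_1(\cc O_V))\cong M_1(\cc O_V)$, the identity $\cc O_{\Gamma_{g\circ f}}\to\cc O_{\Gamma_{g\circ f}}\subset M_1(\cc O_V)$. Hence $\Phi_g\circ\Phi_f=\Phi_{g\circ f}$, and $\sigma$ is a functor.

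The only step with any genuine content is the identification $\Gamma_g\circ\Gamma_f=\Gamma_{g\circ f}$ as an object of $\cc A(V,X)$ (not merely as sets or schemes), and in particular checking that the morphism $\pi$ of Notation~\ref{ZcircS} exists and is an isomorphism in this special case. This is where a small explicit calculation is unavoidable; everything else is bookkeeping using results already established.
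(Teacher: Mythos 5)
Your proposal is correct and matches the paper's (implicit) argument: the paper states Lemma~\ref{hirsch} without proof precisely because the category axioms are exactly Lemmas~\ref{hoho}--\ref{nono} and Proposition~\ref{nonono} assembled as you do, and the functoriality of $\sigma$ reduces to the graph computation $\Gamma_g\circ\Gamma_f=\Gamma_{g\circ f}$ together with the observation that $\pi$ is an isomorphism and $\id\circ\id$ unwinds to the identity. Your identification of that computation as the only step with genuine content is accurate.
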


The following notation will be useful later.

\begin{notn}\label{f^*andg_*}{\rm
Let $X,X',Y \in \text{Sm}/k$ and $f:X' \to X$ be a morphism in
$\text{Sm}/k$. Define a functor $f^*: \cc A(X,Y) \to \cc A(X',Y)$ as
the additive functor
   $$\cc A(X,Y) \xrightarrow{- \circ \sigma(f)} \cc A(X',Y).$$
More precisely, $f^*(\Phi)=\Phi \circ \sigma(f)$ and
$f^*(\alpha)=\alpha \odot\id_{\sigma(f)}$.

Let $X,Y,Y' \in \text{Sm}/k$ and $g:Y \to Y'$ be a morphism in
$\text{Sm}/k$. Define a functor $g_*: \cc A(X,Y) \to \cc A(X,Y')$ as
the additive functor
$$\cc A(X,Y) \xrightarrow{\sigma(g) \circ -} \cc A(X,Y').$$
Namely, $g_*(\Phi)=\sigma(g) \circ \Phi$ and
$g_*(\alpha)=\id_{\sigma(g)} \odot \alpha$.

}\end{notn}

Using this notation and Proposition~\ref{nonono}, one has the
following

\begin{cor}\label{propertiesf*andg*}
Let $f: X' \to X$ and $g: Y \to Y'$ be morphisms in $\text{Sm}/k$.
Then $f^* \circ g_* = g_* \circ f^*: \cc A(X,Y) \to \cc A(X',Y')$.
If $f': X'' \to X'$ is a map in $\text{Sm}/k$ then $(f \circ f')^*=
(f')^* \circ f^*: \cc A(X,Y) \to \cc A(X'',Y)$. Also, for any map
$g': Y' \to Y''$ in $\text{Sm}/k$ one has $(g' \circ g)_*= (g')_*
\circ g_*: \cc A(X,Y) \to \cc A(X,Y'')$.
\end{cor}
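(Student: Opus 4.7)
The plan is to deduce each of the three identities as a direct consequence of the strict associativity of the composition pairing $\circ$ established in Proposition~\ref{nonono}, together with the functoriality of $\sigma: Sm/k \to \cc A$ recorded in Lemma~\ref{hirsch}. All three claims express the same principle: once composition in $\cc A$ is strictly associative, the pre-composition operation $f^* = (-\circ\sigma(f))$ and the post-composition operation $g_* = (\sigma(g)\circ -)$ commute with each other and satisfy the expected contravariant/covariant functoriality in $f$ and $g$.

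For the commutation $f^*\circ g_* = g_*\circ f^*$, I would simply compute on an object $\Phi\in\cc A(X,Y)$:
\[
f^*(g_*(\Phi)) = (\sigma(g)\circ \Phi)\circ\sigma(f), \qquad g_*(f^*(\Phi)) = \sigma(g)\circ(\Phi\circ\sigma(f)),
\]
and these agree by Proposition~\ref{nonono}. On a morphism $\alpha:\Phi\to\Phi'$, the analogous identity $\id_{\sigma(g)}\odot(\alpha\odot\id_{\sigma(f)}) = (\id_{\sigma(g)}\odot\alpha)\odot\id_{\sigma(f)}$ is exactly Lemma~\ref{nono}(3).

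For the functoriality $(f\circ f')^* = (f')^*\circ f^*$, I would invoke Lemma~\ref{hirsch} to obtain $\sigma(f\circ f') = \sigma(f)\circ\sigma(f')$, and then associativity gives
\[
\Phi\circ\sigma(f\circ f') = \Phi\circ(\sigma(f)\circ\sigma(f')) = (\Phi\circ\sigma(f))\circ\sigma(f'),
\]
which is $(f')^*(f^*(\Phi))$. A symmetric argument handles $(g'\circ g)_* = (g')_*\circ g_*$ on objects. On morphisms each reduces to Lemma~\ref{nono}(3) applied to identities, once one observes that $\id_{\sigma(f)}\odot\id_{\sigma(f')} = \id_{\sigma(f\circ f')}$; this last identity follows from Lemma~\ref{hohoho} (which identifies composition with an identity morphism) together with the fact that $\sigma$ is a functor of categories.

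I do not expect a real obstacle: the content of the corollary is purely formal bookkeeping on top of Proposition~\ref{nonono} and Lemma~\ref{hirsch}. The only mildly delicate point is keeping track of the behavior of identity morphisms under the pairing $\odot$, which is precisely what Lemmas~\ref{hohoho} and~\ref{nono}(3) are designed to handle.
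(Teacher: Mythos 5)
Your proposal is correct and matches the paper's own (essentially one-line) justification: the corollary is stated as an immediate consequence of the strict associativity in Proposition~\ref{nonono} together with the functoriality of $\sigma$ from Lemma~\ref{hirsch}, with Lemma~\ref{nono}(3) handling the morphism level. The only cosmetic quibble is that the identity $\id_{\sigma(f)}\odot\id_{\sigma(f')}=\id_{\sigma(f\circ f')}$ is most directly a consequence of the pairing $\circ$ being a (bilinear) functor, i.e.\ Lemma~\ref{hoho}, rather than of Lemma~\ref{hohoho}.
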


By~\cite[\S6.1]{GH} for an additive category $\cc C$, one can define
the structure of a symmetric spectrum on the Waldhausen $K$-theory
spectrum $K(\cc C)$. By definition,
   $$K(\cc C)_n=|\Ob S.{}^Q\cc C|,\quad Q=\{1,\ldots,n\}.$$
Moreover, strictly associative bilinear pairings of additive
categories induce strictly associative pairings of their $K$-theory
symmetric spectra (see~\cite[\S6.1]{GH}). The spectrum $K(\mathcal
C)$ is connective as any Waldhausen $K$-theory spectrum.

\begin{notn}\label{kux}{\rm
For any $U,X\in \text{Sm}/k$, we denote by $\bb K(U,X)$ the
Waldhausen $K$-theory symmetric spectrum $K(\cc A(U,X))$, where $\cc
A(U,X)$ is the additive category in the sense of
Definition~\ref{ccAobjects}.

}\end{notn}

Pairing~\eqref{luu} yields a pairing of symmetric spectra
   \begin{equation}\label{forel}
    \bb K(V,U)\wedge\bb K(U,X)\to\bb K(V,X).
   \end{equation}
Proposition~\ref{nonono} implies that~\eqref{forel} is a strictly
associative pairing. Moreover, for any $X\in \text{Sm}/k$ there is a
map $\mathbf 1:S\to\bb K(X,X)$ which is subject to the unit
coherence law (see~\cite[section~6.1]{GH}). Note that $\mathbf
1_0:S^0\to\bb K(X,X)_0$ is the map which sends the basepoint to the
null object and the non-basepoint to the unit object $\id_X$.

Thus we get the following

\begin{thm}\label{semga}
The triple $(\bb K,\wedge,\mathbf 1)$ determines a spectral
category. Moreover, the functor $\sigma:\text{Sm}/k\to\cc A$ of
Lemma~\ref{hirsch} gives a spectral functor
   $$\sigma:\cc O_{naive}\to\bb K$$
between spectral categories such that the pair $(\bb K,\sigma)$ is a
spectral category over $\text{Sm}/k$ in the sense of
Definition~\ref{spectral}(6).
\end{thm}

We now want to define a spectral functor
   \begin{equation*}
    \boxdot:\bb K\wedge\cc O_{naive}\to\bb K.
   \end{equation*}
It is in fact determined by additive functors
   $$f^{\star}:\cc A(X,X')\to\cc A(X\times U,X'\times U'),\quad f:U\to U'\in\Mor(\text{Sm}/k),$$
satisfying certain reasonable properties mentioned below. If
   $$(n,Z,\phi:p^Z_{X,*}(\cc O_Z)\to M_n(\cc O_X))$$
is a representative for $\Phi\in\cc A(X,X')$, then $f^{\star}(\Phi)$ is
represented by the triple
   $$(n,Z\times \Gamma_f,\phi\boxtimes\id_U:(p^Z_X\times\id)_*(\cc O_{Z\times\Gamma_f})\to M_n(\cc O_{X\times U})).$$
Here
$\phi\boxtimes\id_U$
is a unique non-unital homomorphism of sheaves of $\cc O_{X\times U}$-algebras such that
for any affine open subsets
$X_0 \subset X$, $U_0 \subset U$
and for
$Z_0=(p^Z_X)^{-1}(X_0) \subset Z$
the value of
$\phi\boxtimes\id_U$ on $X_0 \times U_0$ is the following non-unital homomorphism of
$k[Z_0 \times U_0]$-algebras:
$$(\phi\boxtimes\id_U)(a\boxtimes b):=(q_{X,0})^*(\phi(a))\cdot (q_{U,0})^*(b) \in M_n(k[X_0\times U_0]),$$
where
$q_{X,0}: X^0\times U^0 \to X^0$ and $q_{U,0}: X^0\times U^0 \to U^0$ are the projections.


To define $f^{\star}$ on morphisms, we note that the canonical morphism
$$adj: q_X^*(P(\phi)) \xrightarrow{q^*_X(i_{P(\phi)})} q^*_X(\cc O^n_X) \xrightarrow{can} \cc O^n_{X\times U} \xrightarrow{p(f^{\star}(\Phi))} P(f^{\star}(\Phi))$$
is an isomorphism.
Given a morphism
$\alpha:\Phi\to\Phi'$ in $\cc A(X,X')$, we set
   $$f^{\star}(\alpha)=adj^{\prime}\circ q_X^*(\alpha)\circ adj^{-1}:P(f^{\star}(\Phi))\to P(f^{\star}(\Phi')).$$
Clearly, $f^{\star}$ is an additive functor.

\begin{prop}\label{uha}
Let $f_1:U\to U'$, $f_2:U'\to U''$ be two maps in $\text{Sm}/k$,
$\Phi_1,\Phi_1'\in\Ob\cc A(X,X')$, $\Phi_2,\Phi_2'\in\Ob\cc
A(X',X'')$, let $\alpha_1:\Phi_1\to\Phi_1'$ be a morphism in $\cc
A(X,X')$ and let $\alpha_2:\Phi_2\to\Phi_2'$ be a morphism in $\cc
A(X',X'')$. Then

\begin{enumerate}
\item $(f_2\circ f_1)^{\star}(\Phi_2\circ\Phi_1)=f_2^{\star}(\Phi_2)\circ
f_1^{\star}(\Phi_1)$;
\item $(f_2\circ f_1)^{\star}(\alpha_2\odot\alpha_1)=f_2^{\star}(\alpha_2)\odot
f_1^{\star}(\alpha_1)$.
\end{enumerate}
\end{prop}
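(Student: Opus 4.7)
The plan is to unwind the definitions of $f^{\star}$ (on objects and on morphisms), of the composition $\circ$ together with the structural isomorphism $\sigma_{12}$ from \eqref{composition}--\eqref{oleg}, and of the adjunction isomorphism $adj$ used to define $f^{\star}$ on morphisms. Both parts are then checked on representatives $(n_i, Z_i, \phi_i)$ of $\Phi_i$, $i = 1, 2$: part (1) reduces to an equality of equivalence classes of triples in the sense of Definition \ref{ccAobjects}, and part (2) to a compatibility between two structural isomorphisms, verified by a diagram chase on affine opens.

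For part (1), direct unwinding of the definitions of $\circ$ and of $f^{\star}$ on objects shows that the left-hand side and the right-hand side are represented respectively by
$$T_{\text{lhs}} = (n_1 n_2,\, (Z_2 \circ Z_1) \times \Gamma_{f_2 \circ f_1},\, (\phi_2 \circ \phi_1) \boxtimes \id_U)$$
and
$$T_{\text{rhs}} = (n_1 n_2,\, (Z_2 \times \Gamma_{f_2}) \circ (Z_1 \times \Gamma_{f_1}),\, (\phi_2 \boxtimes \id_{U'}) \circ (\phi_1 \boxtimes \id_U)).$$
Equality of the underlying reduced supports follows from $\Gamma_{f_2} \circ \Gamma_{f_1} = \Gamma_{f_2 \circ f_1}$, using the identification $(Z_1 \times \Gamma_{f_1}) \times_{X' \times U'} (Z_2 \times \Gamma_{f_2}) \cong (Z_1 \times_{X'} Z_2) \times \Gamma_{f_2 \circ f_1}$ together with Notation \ref{BcircA}. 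To match the scheme structures I pass to the equivalence relation of Definition \ref{ccAobjects}: for a sufficiently large $k$, both triples admit a common refinement supported on a thickening of $(Z_2^{red} \circ Z_1^{red}) \times \Gamma_{f_2 \circ f_1}$, and by uniqueness of the $\pi_k$-map of Notation \ref{ZcircS} both $\phi$-homomorphisms factor through the refinement's $\phi$. Equality of the non-unital $\cc O$-algebra homomorphisms is then checked on affine opens $X_0 \subset X$, $U_0 \subset U$, where both formulas reduce to $(\phi_2 \circ \phi_1)(a) \cdot (q_{U,0})^{*}(b)$.

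For part (2), the identification obtained in part (1) ensures that both sides are morphisms between the same pair of projective modules $P((\phi_2 \circ \phi_1) \boxtimes \id_U)$ and $P((\phi_2' \circ \phi_1') \boxtimes \id_U)$. The left-hand side, by \eqref{oleg} and the definition of $f^{\star}$ on morphisms, equals
$$adj' \circ q_X^{*}\bigl(\sigma_{12}' \circ (\alpha_2 \otimes \alpha_1) \circ \sigma_{12}^{-1}\bigr) \circ adj^{-1},$$
while the right-hand side, obtained by applying \eqref{oleg} to the pair $(f_1^{\star}(\Phi_1), f_2^{\star}(\Phi_2))$, equals $\wt\sigma_{12}' \circ (f_2^{\star}(\alpha_2) \otimes f_1^{\star}(\alpha_1)) \circ \wt\sigma_{12}^{-1}$. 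The claim thus reduces to a naturality square
$$adj \circ q_X^{*}(\sigma_{12}) = \wt\sigma_{12} \circ (adj_{\Phi_2} \otimes adj_{\Phi_1}),$$
and its primed counterpart, which I verify factor by factor: each constituent of $\sigma_{12}$ (the canonical isomorphism $\ell$, the closed embedding $i_1$, the idempotent $p(\phi_2 \circ \phi_1)$, and the $can$ map) has an evident image under $-\boxtimes \id_U$ corresponding to the same constituent of $\wt\sigma_{12}$, and the naturality with respect to the projection $q_X$ is transparent on affine opens where all maps are described explicitly.

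The main obstacle lies in part (1): the scheme structure on $Z_2 \circ Z_1$ depends on an integer $k$ whose choice from Notation \ref{ZcircS} \emph{a priori} differs between the two expressions $(Z_2 \circ Z_1) \times \Gamma_{f_2 \circ f_1}$ and $(Z_2 \times \Gamma_{f_2}) \circ (Z_1 \times \Gamma_{f_1})$. I resolve this by passing to equivalence classes and raising $k$ to a common value so that both triples become equivalent to a common refining triple. Once this scheme-theoretic subtlety is settled, part (2) is a lengthy but mechanical diagram chase on affine opens.
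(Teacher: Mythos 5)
The paper states Proposition~\ref{uha} without proof, treating it as a routine consequence of the definitions, and your proposal supplies exactly that verification: part (1) checked on representing triples up to the equivalence relation of Definition~\ref{ccAobjects} (correctly isolating the thickening-index subtlety from Notation~\ref{ZcircS} as the only real issue), and part (2) reduced via \eqref{oleg} to a naturality square relating $\sigma_{12}$ to the $adj$ isomorphisms. Your approach is correct and is the argument the authors evidently intend.
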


\begin{cor}\label{bahn}
Under the assumptions of Proposition~\ref{uha} the diagram of
functors
   $$\xymatrix{\cc A(X,X')\times\cc A(X',X'')\ar[d]_{f_1^{\star}\times f_2^{\star}}\ar[r]^\circ&\cc A(X,X'')\ar[d]^{(f_2\circ f_1)^{\star}}\\
               \cc A(X\times U,X'\times U')\times\cc A(X'\times U',X''\times U'')\ar[r]^(.65)\circ&\cc A(X\times U,X''\times U'')}$$
is commutative.
\end{cor}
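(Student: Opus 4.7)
The plan is to derive Corollary~\ref{bahn} essentially immediately from Proposition~\ref{uha}. The claim is that a certain square of additive functors between the categories $\cc A(-,-)$ commutes on the nose, and since everything in sight is an \emph{honest} (not weak, not coherent) functor, I only need to check the equality on objects and on morphisms separately.

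First I would take an arbitrary pair of objects $(\Phi_1,\Phi_2)\in\cc A(X,X')\times\cc A(X',X'')$ and chase it around both paths. Going right then down yields $(f_2\circ f_1)^{\star}(\Phi_2\circ\Phi_1)$, while going down then right yields $f_2^{\star}(\Phi_2)\circ f_1^{\star}(\Phi_1)$. These two objects of $\cc A(X\times U,X''\times U'')$ are equal by Proposition~\ref{uha}(1). Next, I would chase a pair of morphisms $(\alpha_1,\alpha_2)$: the top-right path gives $(f_2\circ f_1)^{\star}(\alpha_2\odot\alpha_1)$, the down-bottom path gives $f_2^{\star}(\alpha_2)\odot f_1^{\star}(\alpha_1)$, and these agree by Proposition~\ref{uha}(2). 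Since the two composite functors coincide on objects and on morphisms, they are literally equal as functors, so the square is strictly commutative.

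The only thing to be careful about is bookkeeping of source and target categories (which $f_i$ acts on which factor, and the matching of $U,U',U''$ with $X,X',X''$), so I would state explicitly at the beginning that $f_1^{\star}$ denotes the functor $\cc A(X,X')\to\cc A(X\times U,X'\times U')$ associated with $f_1\colon U\to U'$, and similarly $f_2^{\star}$ with $f_2\colon U'\to U''$, and $(f_2\circ f_1)^{\star}$ with $f_2\circ f_1\colon U\to U''$. After that the verification is a one-line invocation of Proposition~\ref{uha}. There is no real obstacle here: the genuine content was absorbed into Proposition~\ref{uha}, whose proof required the compatibility between the pullback-by-$\id\boxtimes f$ and the composition pairing~\eqref{luu}. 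The corollary is merely repackaging that compatibility as the commutativity of a square of functors.
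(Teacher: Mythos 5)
Your argument is correct and is exactly the intended one: the square's two composites are honest additive functors, and Proposition~\ref{uha}(1) and (2) give their equality on objects and on morphisms respectively, which is all that strict commutativity requires (the paper accordingly leaves the corollary without a written proof). Your care with the sources and targets of $f_1^{\star}$, $f_2^{\star}$, and $(f_2\circ f_1)^{\star}$ is the only point of substance, and you handle it properly.
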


\begin{cor}\label{tyty}
We have a spectral functor
   $$\boxdot:\bb K\wedge\cc O_{naive}\to\bb K$$
such that $(X,U)\in \text{Sm}/k\times \text{Sm}/k$ is mapped to
$X\times U\in \text{Sm}/k$. Moreover, for any morphism $h:X\to X'$
in $\text{Sm}/k$, regarded as the object $\sigma(h)$ of $\cc
A(X,X')$, one has
   $$f^{\star}(\sigma(h))=\sigma(f\times h)\in\Ob\cc A(X\times U,X'\times U')$$
for every morphism of $k$-smooth schemes $f:U\to U'$.
\end{cor}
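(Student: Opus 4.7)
The plan is to assemble the additive functors $f^{\star}$ into a spectral functor by applying Waldhausen's $K$-theory symmetric spectrum construction (as recalled before Theorem~\ref{semga}, following \cite[\S6.1]{GH}). On objects the definition is forced: $\boxdot(X,U)=X\times U$. Since $\cc O_{naive}(U,U')=\Hom_{Sm/k}(U,U')_+\wedge\bb S$ is the free symmetric spectrum on a pointed set, specifying a map of symmetric spectra
$$\bb K(X,X')\wedge\cc O_{naive}(U,U')\to\bb K(X\times U,X'\times U')$$
is equivalent to specifying, for each $f\in\Hom_{Sm/k}(U,U')$, a map $\bb K(X,X')\to\bb K(X\times U,X'\times U')$, with the added basepoint sent to the zero map. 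I would take these to be $K(f^{\star})$.

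Next I would verify the two spectral-functor axioms. Composition compatibility reduces to the claim that the square obtained by applying Waldhausen $K$-theory to the square of additive categories in Corollary~\ref{bahn} is strictly commutative; this is immediate from the functoriality of $K(-)$ on additive categories and from the fact that strictly associative bilinear pairings pass to strictly associative pairings of $K$-theory symmetric spectra (\cite[\S6.1]{GH}), exactly as in the proof of Theorem~\ref{semga}. The unit axiom requires $\id_U^{\star}(\id_X)=\id_{X\times U}$ on the nose. Unravelling the definition of $f^{\star}$ on the representative $(1,\Delta_X,\id)$ of $\id_X=\sigma(\id_X)$, one obtains $(1,\Delta_X\times\Delta_U,\id\boxtimes\id_U)$, and, under the canonical identification $X\times X\times U\times U\cong(X\times U)\times(X\times U)$ swapping the middle factors, $\Delta_X\times\Delta_U$ becomes $\Delta_{X\times U}$ and $\id\boxtimes\id_U$ becomes the identity of $\cc O_{X\times U}$, as required.

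For the ``moreover'' statement, I would directly substitute the representative $(1,\Gamma_h,\id:\cc O_X\to\cc O_X)$ of $\sigma(h)$ into the definition of $f^{\star}$ to obtain the triple $(1,\Gamma_h\times\Gamma_f,\id\boxtimes\id_U)$. Under the canonical identification $X\times X'\times U\times U'\cong(X\times U)\times(X'\times U')$, the closed subscheme $\Gamma_h\times\Gamma_f$ corresponds to the graph $\Gamma_{f\times h}$ of the map $(x,u)\mapsto(h(x),f(u))$, and $\id\boxtimes\id_U$ becomes the identity of $\cc O_{X\times U}$; one thus recovers exactly the triple $(1,\Gamma_{f\times h},\id)=\sigma(f\times h)$.

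The main obstacle, and essentially the only one of substance, is the bookkeeping check that Waldhausen's $K$-theory construction transports the strict equalities of Corollary~\ref{bahn} into strict equalities of maps of symmetric spectra, so that $\boxdot$ genuinely is a spectral functor on the nose rather than merely a homotopy-coherent one; once this is in hand, every remaining verification is a direct unravelling of definitions.
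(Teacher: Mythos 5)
Your proposal is correct and follows essentially the same route the paper intends: the corollary is deduced from Proposition~\ref{uha} and Corollary~\ref{bahn} by feeding the additive functors $f^{\star}$ through the Geisser--Hesselholt $K$-theory construction, using that $\cc O_{naive}(U,U')$ is free on the pointed set $\Hom_{Sm/k}(U,U')_+$ to reduce the spectral-functor data to the maps $K(f^{\star})$, and the ``moreover'' clause is a direct unravelling of the definition of $f^{\star}$ on the representative $(1,\Gamma_h,\id)$.
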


In what follows we shall denote by $\bb K_0$ the ringoid $\pi_0(\bb
K)$.

\begin{thm}[Knizel~\cite{Kni}]\label{knizel}
For any $\bb K_0$-presheaf of abelian groups $\cc F$, i.e. $\cc F$
is a contravariant functor from $\bb K_0$ to abelian groups, the
associated Nisnevich sheaf $\cc F_{\nis}$ has a unique structure of
a $\bb K_0$-presheaf for which the canonical homomorphism $\cc
F\to\cc F_{\nis}$ is a homomorphism of $\bb K_0$-presheaves. If $\cc
F$ is homotopy invariant then so is $\cc F_{\nis}$. Moreover, if the
field $k$ is perfect then every $\bb A^1$-invariant Nisnevich $\bb
K_0$-sheaf is strictly $\bb A^1$-invariant.
\end{thm}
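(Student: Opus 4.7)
The plan is to mimic Voevodsky's strategy for sheaves with transfers, adapted to the $\bb K_0$-framework. First I would unpack the ringoid $\bb K_0$: a morphism $\alpha\in\bb K_0(U,X)$ is represented by a formal $\bb Z$-combination of pairs $(Z,\mathcal{E})$, where $Z\subset U\times X$ is a closed subscheme finite over $U$ (the support of some $\Phi\in\cc A(U,X)$) and $\mathcal{E}=P(\phi)$ is a locally free $\cc O_Z$-module. The composition rule in $\bb K_0$ then reads as the usual finite-correspondence recipe, twisted by the tensor product of modules along $Z_1\times_U Z_2$, exactly mirroring the formula in Notation~\ref{ZcircS}. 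This explicit description makes it possible to transport Voevodsky-style arguments, and in particular there is a natural ringoid morphism $Cor\to\bb K_0$ sending a cycle $Z$ to $[Z,\cc O_Z]$.

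For statement (1), given a $\bb K_0$-presheaf $\cc F$ and a class $\alpha=[Z,\mathcal{E}]\in\bb K_0(U,X)$, I would define the action $\alpha^\ast\colon\cc F_{\nis}(X)\to\cc F_{\nis}(U)$ by Nisnevich-local descent. The key geometric input is the standard splitting lemma for Nisnevich coverings: given a section $s\in\cc F_{\nis}(X)$, there is a Nisnevich cover $X'\to X$ over which $s$ lifts to $\cc F(X')$, and after a further Nisnevich refinement $U'\to U$ the pullback $Z\times_U U'\to U'$ decomposes into clopen pieces each of which factors through $X'$. One defines $\alpha^\ast(s)$ Nisnevich-locally on $U'$ by the sum of the resulting pullbacks via the presheaf $\cc F$. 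The uniqueness clause forces the formula; the non-trivial verification is that different choices of refinement produce the same value, which follows from the transitivity of Nisnevich covers and the unit axiom of $\bb K_0$. Functoriality in $\alpha$ is checked by writing compositions through fiber products and matching them with the $\bb K_0$-composition rule.

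For (2), homotopy invariance is inherited because every ingredient in the construction is $\bb A^1$-compatible: given $s\in\cc F_{\nis}(U\times\bb A^1)$, the two pullbacks via $i_0,i_1\colon U\to U\times\bb A^1$ already agree on $\cc F$ by hypothesis, and the Nisnevich-local description of the $\bb K_0$-action propagates this equality to $\cc F_{\nis}$.

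Statement (3), strict $\bb A^1$-invariance over a perfect field, is the main obstacle and the real heart of the theorem. The strategy mirrors Voevodsky's proof that $\bb A^1$-invariant sheaves with transfers have strictly homotopy invariant Nisnevich cohomology. Concretely I would: produce a Gersten-style resolution of $\cc F_{\nis}$ on henselizations of smooth local schemes, using $\bb K_0$-transfers to lift sections from residue fields; invoke perfectness of $k$ together with standard smoothening/Popescu tricks to reduce to essentially smooth local $k$-algebras; and then deduce vanishing of the higher Nisnevich cohomology on $\bb A^1$-bundles. This is precisely the step where the ringoid $\bb K_0$ must be rich enough to supply analogues of Voevodsky's trace-and-splitting constructions for correspondences and of the norm maps along finite residue-field extensions; the additional flexibility provided by carrying a locally free module $\mathcal{E}$ on $Z$, rather than just the cycle $Z$, is presumably what makes Knizel's argument go through in this generality. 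Because the theorem is attributed to~\cite{Kni}, only this skeleton is needed here; the technical core lives in that reference.
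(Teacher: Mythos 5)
The first thing to say is that the paper itself offers no proof of Theorem~\ref{knizel}: it is attributed to Knizel's thesis~\cite{Kni}, and the only indication of method is the remark immediately following it, namely that the argument is ``in spirit similar'' to Walker's proof of the analogous statement for $K_0$-presheaves~\cite{Wlk}. Your skeleton for parts (1) and (3) is consistent with that intended route (the Voevodsky--Walker strategy: extend transfers to $\cc F_{\nis}$ by splitting finite correspondences over henselian local rings, then run the Gersten/standard-triple machinery over a perfect field for strict invariance), so there is nothing to compare against in detail there. One small correction to your description of $\bb K_0$: an object of $\cc A(U,X)$ gives a module $P(\phi)$ over $p_{U,*}(\cc O_Z)$ whose pushforward to $U$ is a direct summand of $\cc O_U^n$, hence locally free \emph{over $U$}; it is not in general a locally free $\cc O_Z$-module, and $\bb K_0(U,X)=K_0(\cc A(U,X))$ is the group completion of isomorphism classes under $\oplus$, not a free group on pairs $(Z,\cc E)$.

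The genuine gap is in your part (2). You assert that homotopy invariance of $\cc F_{\nis}$ follows because $i_0^*=i_1^*$ already holds on $\cc F$ and the Nisnevich-local description of the transfers ``propagates this equality.'' This does not work: a section of $\cc F_{\nis}(U\times\bb A^1)$ only comes from $\cc F$ after passing to a Nisnevich cover of $U\times\bb A^1$, and such a cover need not be of the form $V\times\bb A^1$ for a cover $V\to U$, so the hypothesis on $\cc F$ cannot be applied directly to the local lifts. In the $Cor$-setting this step is Theorem~5.6 of~\cite{Voe} and is one of the deep results of the theory --- it is proved alongside strict invariance, using the Zariski-local structure theory of homotopy invariant presheaves with transfers (standard triples, the comparison of Zariski and Nisnevich sheafifications, etc.), not by a formal descent argument. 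Any honest write-up of Theorem~\ref{knizel} must either reproduce that machinery for $\bb K_0$-presheaves (this is the content of~\cite{Kni}, following~\cite{Wlk}) or cite it; your sketch correctly treats part (3) as the hard core living in the reference, but part (2) belongs to the same hard core and cannot be dispatched in the two lines you give it.
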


\begin{rem}{\rm
Although the category $\cc A(X,Y)$ is different from the category of
bimodules $\cc P(X,Y)$ (see Appendix for the definition of $\cc
P(X,Y)$), the proof of the preceding theorem is in spirit similar to
the proof of the same fact for $K_0$-presheaves obtained by
Walker~\cite{Wlk}.

}\end{rem}

\begin{prop}\label{psps}
$\bb K_0$ is a $V$-ringoid. If the field $k$ is perfect then it is
also a strict $\bb A^1$-invariant $V$-ringoid.
\end{prop}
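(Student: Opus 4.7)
The plan is to verify the three axioms in the definition of a $V$-ringoid for $\bb K_0=\pi_0(\bb K)$. Two of the three follow immediately from results already established in this section; only axiom~(1) requires real work.

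Axiom~(2) is obtained by applying $\pi_0$ to the spectral functor $\boxdot\colon\bb K\wedge\cc O_{naive}\to\bb K$ constructed in Corollary~\ref{tyty}. This yields a biadditive pairing $\bb K_0\otimes\pi_0\cc O_{naive}\to\bb K_0$; since $\pi_0\cc O_{naive}(X,Y)=\bb Z[\Hom_{Sm/k}(X,Y)]$, restricting the right factor to $AffSm/k$ and to the generators $[f]$ with $f\in\Mor(AffSm/k)$ produces the required $\boxtimes\colon\bb K_0\times AffSm/k\to\bb K_0$. The identity $1_X\boxtimes\alpha=\rho(1_X\times\alpha)$ and additivity in the $\bb K_0$-argument are then read off from Corollary~\ref{tyty} together with the equality $\rho=\pi_0(\sigma)$ built into Theorem~\ref{semga}. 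Axiom~(3) is precisely Knizel's Theorem~\ref{knizel}, whose final clause will also give the strict $\bb A^1$-invariance property when $k$ is perfect.

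For axiom~(1) one must verify, for every elementary distinguished square $Q$, exactness of
\[0\to\bb K_{0,\nis}(-,U')\to\bb K_{0,\nis}(-,U)\oplus\bb K_{0,\nis}(-,X')\to\bb K_{0,\nis}(-,X)\to 0.\]
The plan is to check exactness on Nisnevich stalks. Over a henselian local base $V^h$, the support $Z\subset V^h\times Y$ of any object of $\cc A(V^h,Y)$ is finite over $V^h$ and therefore decomposes canonically as a disjoint union of henselian local components; by the geometry of $Q$ each such component lies either over $U$ or over $X'$ (viewed as etale or open over $X$), and Definition~\ref{ccAobjects} converts this into an orthogonal idempotent decomposition of the representing non-unital homomorphism $p_{V^h,*}(\cc O_Z)\to M_n(\cc O_{V^h})$. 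This exhibits every stalk class of $\bb K_{0,\nis}(-,X)$ as a sum of classes pushed forward from $\bb K_{0,\nis}(-,U)$ and $\bb K_{0,\nis}(-,X')$, with overlap described precisely by $\bb K_{0,\nis}(-,U')$, yielding both surjectivity of the last map and injectivity of the first.

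The main obstacle is axiom~(1). Although the stalk-wise decomposition of $Z$ is geometrically standard, converting it into a clean idempotent decomposition at the level of $\cc A(V^h,Y)$, and identifying the resulting summands with classes pulled back from $\cc A(V^h,U)$, $\cc A(V^h,X')$ and $\cc A(V^h,U')$ via the composition law \eqref{luu}, requires careful bookkeeping with the constructions assembled in the preceding pages. Once these identifications are in place, axioms~(1)--(3) establish that $\bb K_0$ is a $V$-ringoid, and the strict property over a perfect field is then given by the final clause of Theorem~\ref{knizel}.
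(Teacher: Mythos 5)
Your proposal follows essentially the same route as the paper: axiom (2) is obtained by applying $\pi_0$ to the spectral functor $\boxdot$ of Corollary~\ref{tyty}, axiom (3) and the strictness over perfect fields are exactly Theorem~\ref{knizel}, and axiom (1) is the only point requiring work. For axiom (1) the paper simply invokes the proof of \cite[5.9]{GP}, and the argument there is precisely the henselian-stalk decomposition you sketch (a finite scheme over a henselian local base splits into local components lying over $U$ or over $X'$, giving the idempotent decomposition in $\cc A(-,Y)$), so your outline matches the paper's intended proof rather than offering a different one.
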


\begin{proof}
The proof of~\cite[5.9]{GP} shows that for any elementary
distinguished square the sequence of Nisnevich sheaves associated to
representable presheaves
   $$0\to\bb K_{0,\nis}(-,U')\to\bb K_{0,\nis}(-,U)\oplus\bb K_{0,\nis}(-,X')\to\bb K_{0,\nis}(-,X)\to 0$$
is exact.

Let $\rho:\text{Sm}/k\to\bb K_0$ be the composite functor
   \begin{equation}\label{zamok}
    \text{Sm}/k\to\pi_0\cc O_{naive}\xrightarrow{\pi_0(\sigma)}\pi_0(\bb K)=\bb K_0,
   \end{equation}
where $\sigma:\cc O_{naive}\to\bb K$ is the spectral functor
constructed in Theorem~\ref{semga}. Also, let a functor
$\boxtimes:\bb K_0\times \text{Sm}/k\to\bb K_0$ be the composite
functor
   \begin{equation}\label{nemo}
    \bb K_0\times \text{Sm}/k\xrightarrow{}\bb K_0\times \pi_0\cc O_{naive}\to\pi_0(\bb K\wedge\cc O_{naive})
     \xrightarrow{\pi_0(\boxdot)}\bb K_0,
   \end{equation}
where $\boxdot:\bb K\wedge\cc O_{naive}\to\bb K$ is the spectral
functor constructed in Corollary~\ref{tyty}. Then we have that
$\id_X\boxtimes f=\rho(\id_X\times f)$, $(u+v)\boxtimes f=u\boxtimes
f+v\boxtimes f$ for all $u,v\in\Mor(\bb K_0)$ and
$f\in\Mor(\text{Sm}/k)$.

Theorem~\ref{knizel} now implies $\bb K_0$ is a $V$-ringoid. It also
follows from Theorem~\ref{knizel} that it is a strict $\bb
A^1$-invariant $V$-ringoid over perfect fields.
\end{proof}

We are now in a position to prove the main result of the section.

\begin{thm}\label{ooops}
The spectral category $\bb K$ together with the spectral functor
$\sigma:\cc O_{naive}\to\bb K$ of Theorem~\ref{semga} is a
$V$-spectral category in the sense of Definition~\ref{vsp}. If the
field $k$ is perfect then it is also a strict $V$-spectral category.
\end{thm}

\begin{proof}
$\bb K$ is connective by construction. It is proved similar
to~\cite[5.9]{GP} that $\bb K$ is Nisnevich excisive. The structure
spectral functor
   $$\sigma:\cc O_{naive}\to\bb K$$
is constructed in Theorem~\ref{semga}.

It follows from Corollary~\ref{tyty} that there is a spectral
functor
   $$\boxdot:\bb K\wedge\cc O_{naive}\to\bb K$$
sending $(X,U)\in \text{Sm}/k\times \text{Sm}/k$ to $X\times U\in
\text{Sm}/k$ and such that $\id_X\boxdot f=\sigma(\id_X\times f)$
for all $f\in\Mor(\text{Sm}/k)$. Proposition~\ref{psps} implies the
ringoid $\bb K_0$ together with structure functors~\eqref{zamok}
and~\eqref{nemo} is a $V$-ringoid which is strict $\bb
A^1$-invariant whenever the base field $k$ is perfect.
\end{proof}

We are now able to introduce the triangulated category of
$K$-motives.

\begin{defs}\label{prekrasno}{\rm
Suppose $k$ is a perfect field. The {\it triangulated category of
$K$-motives\/} $DK_-^{\text{eff}}(k)$ is the triangulated category
$D\cc O_-^{\text{eff}}(k)$ constructed in Section~\ref{dominus}
associated to the strict $V$-spectral category $\cc O=\bb K$ of
Theorem~\ref{ooops}.

}\end{defs}

To conclude the section, we discuss further useful properties of
categories $\cc A(U,X)$-s.

\begin{prop}\label{AlphaUalphaV}
Under Notation \ref{f^*andg_*} and the notation of Lemma
\ref{hirsch} and the notation which are just above Proposition
\ref{uha} for any $X,Y \in \text{Sm}/k$ and any morphism $f:U\to V$
in $\text{Sm}/k$ the following square of additive functors is
strictly commutative
   $$\xymatrix{\cc A(X\times V,Y\times V)\ar[rrr]^{(1_X\times f)^*}&&&\cc A(X\times U,Y\times V)\\
               \cc A(X,Y)\ar[u]_{\id^{\star}_V}\ar[rrr]^{\id^{\star}_U}&&&\cc A(X\times U,Y\times U).
               \ar[u]_{(1_Y \times f)_*}}$$
\end{prop}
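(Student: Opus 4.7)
The strategy is to unpack both composites in the square as explicit triples $(n,Z'',\psi'')\in\Ob\cc A(X\times U,Y\times V)$ and verify they agree on the nose, relying on the affine-local description of $\phi\boxtimes\id$ given just before Proposition~\ref{uha} together with the composition recipe~\eqref{composition}. Fix $\Phi=(n,Z,\phi)\in\Ob\cc A(X,Y)$. By definition of the $\star$-construction,
$$\id_V^{\star}(\Phi)=(n,Z\times\Delta_V,\phi\boxtimes\id_V),\qquad \id_U^{\star}(\Phi)=(n,Z\times\Delta_U,\phi\boxtimes\id_U),$$
and by Notation~\ref{f^*andg_*} the two composites in the square are
$$(1_X\times f)^*\,\id_V^{\star}(\Phi)=\id_V^{\star}(\Phi)\circ\sigma(1_X\times f),\qquad (1_Y\times f)_*\,\id_U^{\star}(\Phi)=\sigma(1_Y\times f)\circ\id_U^{\star}(\Phi).$$

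The first step is to match supports in $(X\times U)\times(Y\times V)$. A direct computation using Notations~\ref{BcircA} and~\ref{ZcircS} shows that both $(Z\times\Delta_V)\circ\Gamma_{1_X\times f}$ and $\Gamma_{1_Y\times f}\circ(Z\times\Delta_U)$ equal the closed subscheme $Z^f:=\{(x,u,y,f(u)):(x,y)\in Z\}$, which as a scheme is isomorphic to $Z\times U$. Moreover, the fiber products $\Gamma_{1_X\times f}\times_{X\times V}(Z\times\Delta_V)$ and $(Z\times\Delta_U)\times_{Y\times U}\Gamma_{1_Y\times f}$ are already reduced and map isomorphically to $Z^f$, so the maps $\pi$ of Notation~\ref{ZcircS} exist with $k=1$ and are isomorphisms; in particular no nontrivial thickening is introduced on either side.

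The second and main step is to match the non-unital $\cc O_{X\times U}$-algebra homomorphisms. Since $\sigma(1_X\times f)$ and $\sigma(1_Y\times f)$ both have $n=1$ and structure map $\id$, the composition diagram~\eqref{composition} collapses: the expansion $L$ is trivial and $M_{n_2}(\phi_1)$ or $\phi_{2,Z_1}$ reduces to the evident base change in each case. Working on an affine open $X_0\subset X$ with preimage $Z_0\subset Z$, both composites restricted to $X_0\times U$ equal the non-unital homomorphism
$$\cc O_{Z_0\times U}\longrightarrow M_n(\cc O_{X_0\times U}),\qquad a\boxtimes b\longmapsto q_{X}^*(\phi(a))\cdot q_{U}^*(b),$$
where $q_X:X_0\times U\to X_0$, $q_U:X_0\times U\to U$ are the projections. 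On the LHS the map $f^*:k[V]\to k[U]$ enters through $\sigma(1_X\times f)$ applied \emph{before} $\phi\boxtimes\id_V$; on the RHS the same $f^*$ enters through $\sigma(1_Y\times f)$ applied \emph{after} $\phi\boxtimes\id_U$. Because $\phi$ acts only on the $X$/$Y$-factor and $f^*$ only on the $U$/$V$-factor, the two orderings produce the same formula. For a morphism $\alpha:\Phi\to\Phi'$, the two composites act on $\alpha$ by conjugating the induced map $P(\phi\boxtimes\id_?)\to P(\phi'\boxtimes\id_?)$ by the $adj$-isomorphisms of the $\star$-construction and the $\sigma_{12}$-isomorphisms of the composition pairing; naturality of both isomorphisms (which is essentially functoriality of flat base change by $1\times f$) identifies the two conjugates.

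The main obstacle is the explicit bookkeeping in Step~2: although the diagram~\eqref{composition} simplifies dramatically when one argument has $n=1$ and $\phi=\id$, one still has to check that the residual maps $\pi^*$, $p_{V,*}(\phi_{2,Z_1})$ and the base change from $Z\times V$ to $Z\times U$ (respectively from $Y\times U$ to $Y\times V$) reassemble, under the canonical identification of supports with $Z\times U$, into the single $\cc O_{X\times U}$-algebra map displayed above. Once this is carried out in affine coordinates, strict commutativity on objects and morphisms follows without the need for any intervening natural isomorphism.
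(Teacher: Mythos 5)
The paper states Proposition~\ref{AlphaUalphaV} without proof, and your direct verification --- unwinding both composites into triples via Notation~\ref{ZcircS} and diagram~\eqref{composition}, matching supports, and observing that $\phi$ acts only through the $X/Y$-factor while $f^*$ acts only through the $U/V$-factor --- is exactly the intended argument; the strategy is correct on both objects and morphisms. One inaccuracy: your claim in Step~1 that the fiber products $\Gamma_{1_X\times f}\times_{X\times V}(Z\times\Delta_V)$ and $(Z\times\Delta_U)\times_{Y\times U}\Gamma_{1_Y\times f}$ are \emph{reduced} and that the $\pi$ of Notation~\ref{ZcircS} exists with $k=1$ is false when $Z\in\text{SubSch}(X\times Y/Y)$ is a nontrivial thickening $A_m$ with $m>1$; both fiber products are then isomorphic to $Z\times U$, which carries the nilpotents of $Z$. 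This does not damage the proof, because both sides produce a thickening $(Z^{f,red})_k$ of the same reduced support and the algebra maps are compatible under further thickening, so the resulting \emph{equivalence classes} of triples (which is what objects of $\cc A(X\times U,Y\times V)$ are, by Definition~\ref{ccAobjects}) coincide --- but you should route the conclusion through the equivalence relation rather than through a spurious reducedness claim.
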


\begin{notn}\label{CategWithAut}{\rm
For every $X\in \text{Sm}/k,Y\in \text{Sm}/k$ and $n>0$, denote by
$\cc A(X,Y)(\bb G_m^{\times n})$ the category whose objects are the
tuples $(\Phi,\theta_1,\ldots,\theta_n)$, where $\Phi \in\cc A(X,Y)$
and $(\theta_1,\ldots,\theta_n)$ are commuting automorphisms of
$\Phi$. Morphisms from $(\Phi,\theta_1,\ldots,\theta_n)$ to
$(\Phi',\theta'_1,\ldots,\theta'_n)$ are given by morphisms $\alpha:
\Phi \to \Phi'$ in $\cc A(X,Y)$ such that
$\alpha\circ\theta_i=\theta'_i\circ\alpha$ for every $i=1,\ldots,n$.

Using Notation \ref{f^*andg_*} for a morphism $f: X' \to X$ in
$\text{Sm}/k$, define an additive functor
   $$f^*_n: \cc A(X,Y)(\bb G_m^{\times n}) \to \cc A(X',Y)(\bb G_m^{\times n})$$
as follows:
$f^*_n(\Phi,\theta_1,\ldots,\theta_n)=(f^*(\Phi),f^*(\theta_1),
\ldots, f^*(\theta_n))$ on objects and $f^*_n(\alpha)=f^*(\alpha)$
on morphisms.

Using Notation \ref{f^*andg_*} for a morphism $g: Y \to Y'$ in
$\text{Sm}/k$, define an additive functor
$$g_{*,n}: \cc A(X,Y)(\bb G_m^{\times n}) \to \cc A(X,Y')(\bb G_m^{\times n})$$
as follows: $g_{*,n}(\Phi,\theta_1,\ldots,\theta_n)=(g_*(\Phi),
g_*(\theta_1) \ldots, g_*(\theta_n))$ on objects and
$g_{n,*}(\alpha)=g_*(\alpha)$ on morphisms.

}\end{notn}

\begin{defs}\label{CatFunctor}{\rm
Given $X\in \text{Sm}/k,Y\in \text{Sm}/k$ and $n>0$, we define an
additive functor
   $$\rho_{X,Y,n}: \cc A(X,Y\times\bb G_m^{\times n}) \to \cc A(X,Y)(\bb G_m^{\times n})$$
by using the functor $(pr_Y)_*: \cc A(X,Y \times\bb G_m^{\times n})
\to A(X,Y)$ from Notation~\ref{f^*andg_*} as follows. Given an
object $\Phi \in \cc A(X,Y \times\bb G_m^{\times n})$ and its
representative
   $$(n,Z,\varphi: p_{X,*}(\cc O_Z) \to M_n(\cc O_X)),$$
we have $n$ automorphisms $[t_i]$'s of $\Phi$ of the form $m \mapsto
\varphi(t_i|_{Z})m$, where each $t_i=p^*_i(t) \in \Gamma(X\times
Y\times\bb G_m^{\times n})$ and $p_i:X\times Y\times\bb G_m^{\times
n} \to\bb G_m$ is the projection. One sets
   $$\rho_{X,Y,n}(\Phi)= ((pr_Y)_*(\Phi), (pr_Y)_*([t_1]), \ldots, (pr_Y)_*([t_n]))$$
on objects and $\rho_{X,Y,n}(\Phi)(\alpha)=(pr_Y)_*(\alpha)$ on
morphisms.

}\end{defs}

The following lemma is a straightforward consequence of Corollary
\ref{propertiesf*andg*}.

\begin{lem}
The bivariant additive category
   $$\cc A:(\text{Sm}/k)^{\op}\times \text{Sm}/k\to AddCats,\quad(X,Y)\mapsto\cc A(X,Y),$$
satisfies the following property:

\begin{itemize}
\item[$(\ff{Aut})$] for every $X\in \text{Sm}/k,Y\in \text{Sm}/k$ and $n>0$,
the functors $\rho_{X,Y,n}$
meet the following two conditions: \\
(a) for any $f:X'\to X$ in $\text{Sm}/k$ and $n >0$ one has $f^*_n
\circ \rho_{X,Y,n}=\rho_{X',Y,n} \circ f^*$, where $f^*: \cc A(X,Y
\times\bb G_m^{\times n}) \to \cc A(X',Y \times\bb G_m^{\times n})$
is defined in Notation \ref{f^*andg_*};\\
(b) using Notation~\ref{f^*andg_*}, for any $g: Y \to Y'$ in $\text{Sm}/k$ and $n>0$ one has \\
   $$g_{*,n} \circ \rho_{X,Y,n}=\rho_{X,Y',n} \circ (g \times id_n)_*,$$
where $id_n$ is the identity morphism of $\bb G_m^{\times n}$.
\end{itemize}
\end{lem}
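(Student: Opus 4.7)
The plan is to verify both identities (a) and (b) by unwinding the definitions on objects and on morphisms, and reducing the underlying $\cc A(-,-)$-object part to Corollary \ref{propertiesf*andg*} while checking by inspection that the $n$ commuting automorphisms are transported correctly. Since $f^*_n$ and $g_{*,n}$ act on objects as $f^*$ and $g_*$ on the first coordinate and as $f^*,g_*$ on each automorphism, and on morphisms simply as $f^*$ and $g_*$, it suffices to prove on objects $\Phi \in \cc A(X, Y \times \bb G_m^{\times n})$ the equalities of the underlying $\cc A$-objects and of each of the $n$ listed automorphisms.

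For (a), fix a representative $(n, Z, \phi : p_{X,*}(\cc O_Z) \to M_n(\cc O_X))$ of $\Phi$. The underlying object match reads $f^*((pr_Y)_*\Phi) = (pr_Y)_*(f^*\Phi)$, which is exactly the commutation $f^* \circ (pr_Y)_* = (pr_Y)_* \circ f^*$ furnished by Corollary \ref{propertiesf*andg*}. For the automorphisms, note that $[t_i]$ is defined intrinsically via the section $t_i \in \Gamma(X \times Y \times \bb G_m^{\times n}, \cc O)$ pulled back from the $i$-th $\bb G_m$-coordinate, and its action on $\Phi$ is given by multiplication by $\phi(t_i|_Z)$. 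Under $f^* = - \circ \sigma(f)$, the representing data of $f^*\Phi$ has support $Z \times_X X'$ and the homomorphism obtained by base change, so multiplication by $t_i$ restricted to this new support coincides with $f^*([t_i])$. Pushforward $(pr_Y)_*$ leaves automorphisms untouched as endomorphisms of the underlying sheaf, so the two tuples agree entry by entry.

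For (b), the underlying object match reads $g_*((pr_Y)_*\Phi) = (pr_{Y'})_*((g \times \id_n)_*\Phi)$, which follows from Corollary \ref{propertiesf*andg*} once one observes that both sides are $h_*\Phi$ for $h = g \circ pr_Y = pr_{Y'} \circ (g \times \id_n)$. For the automorphisms, the key point is that the coordinate functions $t_i$ on $Y \times \bb G_m^{\times n}$ are pulled back along $(g \times \id_n)$ to the corresponding coordinate functions on $Y \times \bb G_m^{\times n}$ (i.e.\ they depend only on the $\bb G_m^{\times n}$ factors, which are fixed by $g \times \id_n$). Consequently multiplication by $\phi(t_i|_Z)$ in the source is carried by $(g \times \id_n)_*$ to multiplication by $\phi(t_i|_Z)$ in the target, and pushing forward by $pr_{Y'}$ then agrees with applying $g_*$ after $(pr_Y)_*$.

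The main obstacle, and really the only non-bookkeeping point, is confirming that the automorphism $[t_i]$ is natural with respect to the functors $f^*$ and $(g \times \id_n)_*$ at the level of the representing data $(n, Z, \phi)$ and the idempotent image $P(\phi)$. Once this is done on objects, compatibility on morphisms is automatic because both $f^*_n$ (resp.\ $g_{*,n}$) and $\rho$ act on morphisms simply by $f^*$ (resp.\ $g_*$) and by $(pr_Y)_*$, and again Corollary \ref{propertiesf*andg*} gives $f^* \circ (pr_Y)_* = (pr_Y)_* \circ f^*$ and $g_* \circ (pr_Y)_* = (pr_{Y'})_* \circ (g \times \id_n)_*$.
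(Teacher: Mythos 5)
Your proposal is correct and follows exactly the route the paper intends: the paper gives no proof at all, simply declaring the lemma ``a straightforward consequence of Corollary~\ref{propertiesf*andg*},'' and your reduction of the underlying-object identities to that corollary, together with the check that the automorphisms $[t_i]$ are transported correctly by $f^*$ and $(g\times\id_n)_*$, is precisely the omitted bookkeeping.
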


The following proposition is true as well.

\begin{prop}\label{CatIsomorpism}
For every $X\in \text{Sm}/k,Y\in \text{Sm}/k$ and $n>0$ the additive
functor
$$\rho_{X,Y,n}:\cc A(X,Y\times\bb G_m^{\times n})\to\cc A(X,Y)(\bb G_m^{\times n})$$
is a category isomorphism (it is not just an equivalence of categories).
\end{prop}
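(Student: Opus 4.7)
The plan is to exhibit an explicit inverse functor $\lambda_{X,Y,n}:\cc A(X,Y)(\bb G_m^{\times n})\to\cc A(X,Y\times\bb G_m^{\times n})$ and check that both compositions with $\rho_{X,Y,n}$ are the identity on the nose. The conceptual content is that giving a closed subscheme $Z\subset X\times Y\times\bb G_m^{\times n}$ finite over $X$ together with an algebra map $\phi:p_{X,*}(\cc O_Z)\to M_n(\cc O_X)$ is equivalent to giving a closed subscheme $W\subset X\times Y$ finite over $X$, an algebra map $\psi:p_{X,*}(\cc O_W)\to M_n(\cc O_X)$, and $n$ commuting automorphisms of $P(\psi)$ corresponding to multiplication by the invertible coordinate functions $t_1,\ldots,t_n$; in one direction this is precisely the functor $\rho_{X,Y,n}$.

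Given $(\Psi,\theta_1,\ldots,\theta_n)$ with representative $(n,W,\psi)$, the commuting automorphisms $\theta_i$ of $P(\psi)$ commute with the action of $\psi(p_{X,*}(\cc O_W))$ and thus lie in its centralizer in $M_n(\cc O_X)$. Together with $\psi$ they assemble into a unique non-unital $\cc O_X$-algebra homomorphism
\begin{equation*}
\wt\psi:p_{X,*}(\cc O_{W\times\bb G_m^{\times n}})=p_{X,*}(\cc O_W)[t_1^{\pm 1},\ldots,t_n^{\pm 1}]\longrightarrow M_n(\cc O_X),\quad f\cdot t_1^{a_1}\cdots t_n^{a_n}\longmapsto\psi(f)\cdot\theta_1^{a_1}\cdots\theta_n^{a_n}.
\end{equation*}
Since $M_n(\cc O_X)$ is coherent over $\cc O_X$, the image of $\wt\psi$ is a finite $\cc O_X$-subalgebra, hence equals $p_{X,*}(\cc O_{Z^\circ})$ for a closed subscheme $Z^\circ\subset W\times\bb G_m^{\times n}\subset X\times Y\times\bb G_m^{\times n}$ finite over $X$. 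Define $\lambda_{X,Y,n}(\Psi,\theta_i)$ to be the equivalence class of $(n,((Z^\circ)^{red})_m,\phi^\circ)$ for $m\gg 0$ large enough that $((Z^\circ)^{red})_m\supset Z^\circ$, where $\phi^\circ$ is the composite $p_{X,*}(\cc O_{((Z^\circ)^{red})_m})\twoheadrightarrow p_{X,*}(\cc O_{Z^\circ})\hookrightarrow M_n(\cc O_X)$. On morphisms, $\lambda_{X,Y,n}$ sends $\alpha:(\Psi,\theta_i)\to(\Psi',\theta'_i)$---an $\cc O_X$-linear map $P(\psi)\to P(\psi')$ that is $p_{X,*}(\cc O_W)$-linear (after equalizing supports) and intertwines the $\theta_i$ with the $\theta'_i$---to the same underlying map, now viewed as linear for the combined $p_{X,*}(\cc O_{W\times\bb G_m^{\times n}})$-action.

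The main technical obstacle is verifying $(Z^\circ)^{red}\in\text{Supp}(X\times Y\times\bb G_m^{\times n}/Y\times\bb G_m^{\times n})$, i.e.\ that each irreducible component of $Z^\circ$ is finite and surjective over $X$. Finiteness is immediate. For surjectivity, working component-wise on the smooth (hence integral on each irreducible component) scheme $X$, observe that $p_{X,*}(\cc O_{Z^\circ})$ is torsion-free over $\cc O_X$ as a submodule of the free module $M_n(\cc O_X)$; therefore no nonzero element of $\cc O_X$ is a zero-divisor in $p_{X,*}(\cc O_{Z^\circ})$, forcing the contraction of every minimal prime of $p_{X,*}(\cc O_{Z^\circ})$ to $\cc O_X$ to be the zero prime. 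Geometrically this says that each irreducible component of $Z^\circ$ dominates, and hence---being finite and proper over $X$---surjects onto the corresponding irreducible component of $X$.

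The two required equalities then follow by direct unwinding. For $\rho_{X,Y,n}\circ\lambda_{X,Y,n}=\id$, the construction guarantees $pr_{X,Y}(Z^\circ)=W$, the restriction $\wt\psi|_{p_{X,*}(\cc O_W)}$ equals $\psi$, and $\wt\psi(t_i|_{Z^\circ})=\theta_i$, so $\rho$ returns the original tuple verbatim. For $\lambda_{X,Y,n}\circ\rho_{X,Y,n}=\id$, starting from $\Phi$ represented by $(n,Z,\phi)$, the tuple $\rho(\Phi)=((pr_Y)_*(\Phi),[t_i])$ with $[t_i]=\phi(t_i|_Z)$ feeds back into $\lambda_{X,Y,n}$ to produce $\wt\psi$ factoring as $p_{X,*}(\cc O_{pr_{X,Y}(Z)\times\bb G_m^{\times n}})\twoheadrightarrow p_{X,*}(\cc O_Z)\xrightarrow{\phi}M_n(\cc O_X)$; the image equals $\phi(p_{X,*}(\cc O_Z))$, so the reconstructed subscheme coincides with $Z$ up to the equivalence relation on triples.
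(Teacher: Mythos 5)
The paper states this proposition without proof, so there is nothing to compare against; your strategy of exhibiting an explicit inverse $\lambda_{X,Y,n}$ and checking both composites on the nose is the natural way to obtain an isomorphism rather than a mere equivalence, and the argument is essentially correct. The key points are all present: the $\theta_i$ and their inverses centralize $\psi(p_{X,*}(\cc O_W))$ inside $\End(P(\psi))$, so together with $\psi$ they assemble into a non-unital algebra map $\wt\psi$ on $p_{X,*}(\cc O_W)[t_1^{\pm1},\ldots,t_n^{\pm1}]=p_{X,*}(\cc O_{W\times\bb G_m^{\times n}})$; its image is coherent as a quasi-coherent subsheaf of the coherent sheaf $M_n(\cc O_X)$ over a Noetherian scheme; and the torsion-freeness argument correctly forces every minimal prime of $\im\wt\psi$ to contract to the generic point of the relevant component of $X$, so $(Z^\circ)^{red}$ does belong to $\text{Supp}(X\times Y\times\bb G_m^{\times n}/Y\times\bb G_m^{\times n})$. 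Two small imprecisions, neither fatal: (i) $pr_{X,Y}(Z^\circ)$ need not equal $W$ --- if $\psi$ has a kernel it can be a proper closed subscheme of $W$ --- but the equivalence relation of Definition~\ref{ccAobjects} absorbs this, exactly as you invoke it for the other composite; (ii) it is worth recording explicitly that the hom-sets literally coincide, namely that $p_{X,*}(\cc O_{Z''})$-linearity for a common support $Z''\subset W''\times\bb G_m^{\times n}$ is equivalent to $p_{X,*}(\cc O_{W''})$-linearity together with intertwining the $[t_i]$'s (because $\cc O_{Z''}$ is generated by functions pulled back from $X\times Y$ and the $t_i^{\pm1}$); this is what makes $\rho_{X,Y,n}$ and $\lambda_{X,Y,n}$ literally mutually inverse on morphisms and not merely inverse up to the canonical $\sigma_{12}$-isomorphisms entering the definition of $(pr_Y)_*$.
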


\section{Comparing $\cc A(X,Y)$ with $\tilde{\cc P}(X,Y)$}

Let $X,Y$ be two $k$-schemes of finite type over the base field $k$.
We denote by $\cc P(X,Y)$ the category of coherent $\cc O_{X\times
Y}$-modules $P_{X,Y}$ such that $\supp (P_{X,Y})$ is finite over $X$
and the coherent $\cc O_X$-module $(p_X)_*(P_{X,Y})$ is locally
free. A disadvantage of the category $\cc P(X,Y)$ is that whenever
we have two maps $f:X\to X'$ and $g:X'\to X''$ then the functor
$(g\circ f)^*$ agrees with $f^*\circ g^*$ only up to a canonical
isomorphism. To fix the problem, we replace $\cc P(X,Y)$ by the
equivalent additive category of big bimodules $\tilde{\cc P}(X,Y)$
which is functorial in both arguments. This is done in Appendix.

In this section for any $X\in \text{Sm}/k$ and $Y\in \text{AffSm}/k$
a canonical functor
   $$F_{X,Y}:\cc A(X,Y)\to\tilde{\cc P}(X,Y)$$
is constructed. Logically, one should now read Appendix about big
bimodules, and then return to this section.

As an application, we obtain canonical isomorphisms over a perfect
field $k$
   $$K_i(X)\cong DK_-^{\text{eff}}(k)(M_{\bb K}(X)[i],M_{\bb K}(pt)),\quad X\in \text{Sm}/k,i\in\bb Z,pt=\spec k,$$
where $K(X)$ is an algebraic $K$-theory spectrum defined as the
Waldhausen symmetric $K$-theory spectrum $K(\tilde{\cc P}(X,pt))$
and $DK_-^{\text{eff}}(k)$ is the triangulated category of
$K$-motives (see Definition~\ref{prekrasno}).

Let $X,Y \in \text{Sm}/k$ and assume that $Y$ is affine. Let $\cc
A(X,Y)$ be the additive category in the sense of
Definition~\ref{ccAobjects} and let $\tilde{\cc P}(X,Y)$ be the
additive category of big bimodules defined in Appendix. If
$f:X^{\prime}\to X$ is a morphism in $\text{Sm}/k$, then there is an
additive functor $f^*:\cc A(X,Y)\to\cc A(X^{\prime},Y)$ defined in
Notation~\ref{f^*andg_*}. By Corollary \ref{propertiesf*andg*} the
assignments $X \mapsto \cc A(X,Y)$ and $f\mapsto f^*$ yield a
presheaf of small additive categories on $\text{Sm}/k$. By
Lemma~\ref{derev} the assignments $X\mapsto\tilde{\cc P}(X,Y)$ and
$f\mapsto (f^*:\tilde{\cc P}(X,Y)\to\tilde{\cc P}(X^{\prime},Y))$
yield another presheaf of small additive categories on
$\text{Sm}/k$.

The main goal of this section is to prove the following

\begin{thm}\label{AandP}
Let $Y$ be an affine $k$-smooth variety. Then there is a morphism
  $$F:\cc A(-, Y)\to \tilde{\cc P}(-,Y)$$
of presheaves of additive categories on $\text{Sm}/k$ such that for
any $k$-smooth affine $X$ the functor $F_{X,Y}:\cc
A(X,Y)\to\tilde{\cc P}(X,Y)$ is an equivalence of categories.
\end{thm}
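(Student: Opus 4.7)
The plan is to construct $F_{X,Y}$ explicitly using the standard equivalence between $p_{X,*}\cc O_Z$-modules on $X$ and quasi-coherent $\cc O_{X\times Y}$-modules supported on $Z$, and then verify essential surjectivity on affine $X$ by converting the projectivity of $(p_X)_{*}P$ into an idempotent. On objects, given a representative $(n,Z,\phi)$ of $\Phi\in\cc A(X,Y)$, the module $P(\phi)=\im(\phi(1))$ is a direct summand of $\cc O_X^n$, hence a locally free coherent $\cc O_X$-module of finite rank, and $\phi$ makes it into a $p_{X,*}(\cc O_Z)$-module. Since $p_X|_Z\colon Z\to X$ is finite (in particular affine), pushforward is an equivalence between quasi-coherent $\cc O_Z$-modules and quasi-coherent $p_{X,*}(\cc O_Z)$-modules on $X$; composing with pushforward along the closed embedding $Z\hookrightarrow X\times Y$ produces a coherent $\cc O_{X\times Y}$-module supported inside $Z$ whose pushforward to $X$ equals $P(\phi)$. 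Thus the output lies in $\cc P(X,Y)$, and I would then lift it through the equivalence $\cc P(X,Y)\simeq\tilde{\cc P}(X,Y)$ of the appendix. Independence of the representative follows directly from the definition of the equivalence relation on triples (both are witnessed by a common enlargement $(n,Z'',\phi'')$, and $P(\phi)=P(\phi'')=P(\phi')$ as sheaves inside $\cc O_X^n$); on morphisms, $\Hom_{\cc A(X,Y)}(\Phi,\Phi')=\Hom_{p_{X,*}(\cc O_Z)}(P(\phi),P(\phi'))$ matches $\Hom$ in $\tilde{\cc P}(X,Y)$ via the same chain of equivalences; additivity follows from the block-diagonal formula for $\Phi_1\oplus\Phi_2$.

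Next I would address functoriality in $X$. The reason the target is $\tilde{\cc P}(X,Y)$ rather than $\cc P(X,Y)$ is precisely to obtain strict (not pseudo-)functoriality of pullback, so I must verify that $F_{X',Y}\circ f^{*}=f^{*}\circ F_{X,Y}$ on the nose for every $f\colon X'\to X$ in $Sm/k$. On $\cc A$ this pullback is composition with $\sigma(f)\in\cc A(X',X)$; on $\tilde{\cc P}$ it is defined in the appendix so as to be strictly functorial. The comparison then reduces to showing that the composition law $\circ$ on $\cc A$ is transported by $F$ to the tensor product of big bimodules, which comes down to unwinding diagram~\eqref{composition}: the canonical isomorphism $\sigma_{12}$ is exactly the base-change isomorphism identifying the two tensor-product computations, and the isomorphism $L$ corresponds to the reassembly of a matrix of matrices.

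Finally, for $X$ affine, full faithfulness is immediate from the identification of $\Hom$-sets above (and in fact holds for every $X$), and it remains to prove essential surjectivity. Given $P\in\cc P(X,Y)$, the sheaf $(p_X)_{*}P$ corresponds to a finitely generated projective $\Gamma(X,\cc O_X)$-module, hence to $\im(e)$ for some idempotent $e\in M_n(\Gamma(X,\cc O_X))$. Since $\supp(P)$ is finite over $X$ and $P$ is coherent, a power $I_{\supp(P)}^{m}$ annihilates $P$; taking $Z=\supp(P)_m$ in $\mathrm{SubSch}(X\times Y/Y)$, the $\cc O_{X\times Y}$-action on $P$ factors through $p_{X,*}\cc O_Z$. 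Using the canonical splitting $\cc O_X^n=\im(e)\oplus\ker(e)$ induced by $e$, the action on $(p_X)_{*}P=\im(e)$ extended by zero on $\ker(e)$ yields a non-unital homomorphism $\phi\colon p_{X,*}\cc O_Z\to M_n(\cc O_X)$ with $\phi(1)=e$; the resulting triple gives $\Phi\in\cc A(X,Y)$ with $F_{X,Y}(\Phi)\cong P$ by construction. The main obstacle I anticipate is the strict commutativity for functoriality in $X$: keeping track of the canonical isomorphisms packaged both in the $\circ$-pairing on $\cc A$ (diagram~\eqref{composition}) and in the big-bimodule formalism of $\tilde{\cc P}$ requires careful bookkeeping, and it is exactly for this reason that the appendix replaces $\cc P$ by $\tilde{\cc P}$.
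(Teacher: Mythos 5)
Your argument for the equivalence when $X$ is affine is essentially right and matches the paper's: full faithfulness comes from the identification of Hom-sets, and essential surjectivity from realizing $(p_X)_*P$ as the image of an idempotent $e\in M_n(k[X])$ and factoring the $k[X\times Y]$-action through $k[X\times Y]/I_{\supp P}^m$ (the paper packages this as a factorization $R\circ F_{X,Y}\cong a_{X,Y}\circ\Gamma$ through an auxiliary category $A(X,Y)$ of pairs $(n,\phi:k[Y]\to M_n(k[X]))$, with all three other functors shown to be equivalences).

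However, there is a genuine gap in your construction of $F_{X,Y}$ itself. You build, from a triple $(n,Z,\phi)$, a single object of $\cc P(X,Y)$ and then propose to ``lift it through the equivalence $\cc P(X,Y)\simeq\tilde{\cc P}(X,Y)$.'' That lift requires choosing a quasi-inverse of $R:\tilde{\cc P}(X,Y)\to\cc P(X,Y)$, and no choice of quasi-inverses (one for each $X$) will make the squares $F_{X',Y}\circ f^*=f^*\circ F_{X,Y}$ commute on the nose: the obstruction is exactly the failure of strict functoriality of pullback on $\cc P(-,Y)$ that $\tilde{\cc P}$ was introduced to repair. You correctly flag strict commutativity as the main obstacle, but the route you describe cannot resolve it. The paper's construction avoids the issue by never passing through $\cc P(X,Y)$: from $\Phi$ it extracts the non-unital algebra map $\Phi_X:k[Y]\to M_n(k[X])$ and defines, for \emph{every} $U\in Sch/X$ at once, $\Phi_U:=M_n(f^*)\circ\Phi_X$, the projector $p^\Phi_U=\Phi_U(1)$, the module $P^\Phi_U=\im(p^\Phi_U)$, and then a sheaf $\mathcal P^\Phi_U$ on $U\times Y$ assembled stalkwise, together with the structure maps $\sigma_f$. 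Because the whole compatible family is produced directly from data ($\Phi_X$ and matrix entries) that pull back strictly, the resulting assignment $X\mapsto F_{X,Y}$ is a strict morphism of presheaves by construction. If you want to salvage your approach you must replace the ``lift through the equivalence'' step by such an explicit family-level construction; as written, the first half of the theorem (existence of the morphism of presheaves $F$) is not established.
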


We postpone the proof but first construct a functor
   $$F_{X,Y}:\cc A(X,Y)\to\tilde{\cc P}(X,Y)$$
which is an equivalence of categories whenever $X$ is affine. We
shall do this in several steps. Let $\Phi\in\cc A(X,Y)$ be an
object. It is represented by a triple
   $$(n,Z,\phi:p_{X,*}(\cc O_Z)\to M_n(\cc O_X)),$$
where $n$ is a nonnegative integer, $Z\in\text{SubSch}(X\times Y/Y)$
(see Notation~\ref{SubSch}) and $\phi$ is a non-unital homomorphism
of sheaves of $\cc O_X$-algebras. Thus one can consider the
composite of non-unital $k$-algebra homomorphisms
   $$\Phi_X: k[Y] \to k[X \times Y] \to k[Z] \xrightarrow{\phi} M_n(k[X]).$$
Clearly, it does not depend on the choice of a triple representing
the object $\Phi$.

Let $Sch/X$ be the category of $X$-schemes of finite type. For an
$X$-scheme $f: U \to X$ in $Sch/X$ set
   $$\Phi_U:=M_n(f^*) \circ \Phi_X: k[Y] \to M_n(k[U]).$$
Note that $\Phi_U$ depends not only on $U$ itself but rather on the
$X$-scheme $U$. The assignment $U/X \mapsto \Phi_U$ defines a
morphism of presheaves of non-unital $k$-algebras $(U/X \mapsto
k[Y]) \to (U/X \mapsto M_n(k[U]))$.

One has a compatible family of projectors given by $U/X \mapsto
p^{\Phi}_U=\Phi_U(1) \in M_n(k[U])$. Set $P^{\Phi}_U=\im(p^{\Phi}_U)
\subset k[U]^n$. Then the assignment
   \begin{equation}\label{PPhi_U}
    U \mapsto P^{\Phi}_U
   \end{equation}
is a presheaf of $(U/X \mapsto k[U])$-modules.
Given $U/X \in Sch/X$ and a point $u \in U$, we set
   $$p^{\Phi}_{U,u}:=\lp_{u \in V \subset U}p^{\Phi}_V \in
   M_n(\mathcal O_{U,u}),\quad P^{\Phi}_{U,u}:=\im(p^{\Phi}_{U,u}) \subset\mathcal O_{U,u}^n.$$
The stalk of the presheaf $(U \mapsto P^{\Phi}_{U})$ of
$(U/X \mapsto k[U])$-modules
at the point $u \in U$ is the $\mathcal O_{U,u}$-module
$P^{\Phi}_{U,u}$.

The presheaf of $(U/X \mapsto k[U])$-modules $U/X \mapsto
P^{\Phi}_U$ has, moreover, a $k[Y]$-module structure. Namely, for
each $U/X \in Sch/X$ the $k$-algebra $k[Y]$ acts on the
$k[U]$-module $P^{\Phi}_U$ by means of the non-unital $k$-algebra
homomorphism $\Phi_U: k[Y] \to M_n(k[U])$.
Therefore the $k$-algebra $k[Y]$ acts on the $\mathcal
O_{U,u}$-module $P^{\Phi}_{U,u}$ by means of a non-unital
$k$-algebra homomorphism
   $$\Phi_{U,u}: k[Y] \xrightarrow{\Phi_U} M_n(k[U]) \xrightarrow{M_n(can)} M_n(\mathcal O_{U,u}),$$
where $can$ is
the localization homomorphism $k[U] \to \mathcal O_{U,u}$. {\it In
what follows we will regard the $\mathcal O_{U,u}$-module
$P^{\Phi}_{U,u}$ as an $\mathcal O_{U,u} \otimes_k k[Y]$-module via
the non-unital $k$-algebra homomorphism $\Phi_{U,u}$}.

\begin{defs}\label{StalkOfPhiUq}{\rm
Let $U/X \in Sch/X$ and $q \in U \times Y$ be a point. Let
$u=pr_U(q) \in U$ be its image in $U$. Set
   $$\mathcal P^{\Phi}_{U,q}:=\bigg\{\frac{m}{g}\mid m\in P^{\Phi}_{U,u}, g \in \mathcal O_{U,u}\otimes_k k[Y]\textrm{ such that } g(q) \neq 0\bigg\}/ \sim,$$
where "$\sim$" is the standard equivalence relation for fractions.
Clearly, $\mathcal P^{\Phi}_{U,q}$ is an $\mathcal O_{U \times Y,
q}$-module.

}\end{defs}

Now define a Zariski sheaf $\mathcal P^{\Phi}_U$ of $\mathcal O_{U
\times Y}$-modules on $U\times Y$ as follows. Its sections on an
open set $W\subset U\times Y$ are a compatible family of elements
$\{n_q\in \mathcal P^{\Phi}_{U,q}\}_{q \in  W}$. More precisely, we
give the following

\begin{defs}\label{PhiAndBigBimodule}{\rm
Set $\mathcal P^{\Phi}_U(W)$ to consist of the tuples
$(n_q)\in\prod_{q \in W}\mathcal P^{\Phi}_{U,q}$ such that there is
an affine cover $U=\cup U_i$ and for any $i$ there is an affine
cover of the form  $(W\cap{U_i\times Y})=\cup(U_i\times
Y)_{g_{i_j}}$ with $g_{i_j}\in k[U_i \times Y]$ and there are
elements $n_{i_j}\in(P^{\Phi}_{U_i})_{g_{i_j}}$ such that for any
$i$ and any $i_j$ and any point $q\in(U_i \times Y)_{g_{i_j}}$ one has
$n_{i_j}=n_q \in\mathcal P^{\Phi}_{U,q}$. Here $(U_i\times
Y)_{g_{i_j}}$ stands for the principal open set associated with
$g_{i_j}$.

Clearly, the assignment $W \mapsto \mathcal P^{\Phi}_U(W)$ is a
Zariski sheaf of $\mathcal O_{U \times Y}$-modules on $U\times Y$.
The $\mathcal O_{U \times Y}$-module structure on this sheaf is
given as follows: for $f \in k[W]$ and $(n_q)\in \mathcal
P^{\Phi}_U(W)$ set $f\cdot (n_q)=(f\cdot n_q)$.

}\end{defs}

Next, for any morphism $f:V\to U$ of objects in $Sch/X$ construct a
sheaf morphism
   $$\sigma_f:\mathcal P^{\Phi}_U\to F_*(\mathcal P^{\Phi}_V),$$
where $F=f \times \id: V \times Y \to U \times Y$. Given a point $v
\in V$ and its image $u \in U$, set $F^*_v=p^{\Phi}_{V,v} \circ f^*
\circ i^{\Phi}_{U,u}: P^{\Phi}_{U,u} \to P^{\Phi}_{V,v}$, where
$i^{\Phi}_{U,u}: P^{\Phi}_{U,u} \hookrightarrow \mathcal O_{U,u}^n$
is the inclusion.

For any point $r \in V \times Y$ and its image
$s=F(r) \in U \times Y$
set
$v=pr_V(r)$ and $u=pr_U(s)$.
Clearly, $f(v)=u$.
The $k$-algebra homomorphism
$\cc O_{U\times Y,s} \to \cc O_{V\times Y,r}$
makes
$\mathcal P^{\Phi}_{V,r}$
an $\cc O_{U\times Y,s}$-module.
There is a unique homomorphism
$F^*_{r}: \mathcal P^{\Phi}_{U,s} \to \mathcal P^{\Phi}_{V,r}$
of
$\mathcal O_{U \times Y,s}$-modules
making the diagram commutative
\begin{equation*}
    \xymatrix{P^{\Phi}_{U,u}\ar[r]\ar[d]_{F^*_{v}}&\mathcal P^{\Phi}_{U,s}\ar[d]^{F^*_{r}}\\
              P^{\Phi}_{V,v}\ar[r]_{}&\mathcal P^{\Phi}_{V,r}}
   \end{equation*}
Let $W \subset U \times Y$ be an open subset. By definition,
   $$\mathcal P^{\Phi}_U(W)=\{(n_s)\in\prod_{s \in W}\mathcal P^{\Phi}_{U,s}\mid n_s\textrm{ are locally compatible}\}$$
and
   $$F_*(\mathcal P^{\Phi}_V)(W)=\mathcal P^{\Phi}_V(F^{-1}(W))=
     \{(n_r)\in\prod_{r \in F^{-1}(W)}\mathcal P^{\Phi}_{V,r}\mid n_r \textrm{ are locally compatible}\}.$$
Define $F^*_W:\mathcal P^{\Phi}_U(W) \to F_*(\mathcal
P^{\Phi}_V)(W)$ as follows. Given a section $(n_s \in \mathcal
P^{\Phi}_{U,s})_{s\in W}$ of $\mathcal P^{\Phi}_U$ over $W$, set
   $$F^*_{W}((n_s \in \mathcal P^{\Phi}_{U,s})_{s \in W}):=((F^*_r(n_s)_{f(r)=s}))_{s \in W}.$$
It is straightforward to check that the assignment $W\mapsto
F^*_{W}$ defines an $\mathcal O_{U \times Y}$-sheaf morphism
$$\sigma_f: \mathcal P^{\Phi}_U \to F_*(\mathcal P^{\Phi}_V).$$
Moreover, for a pair of morphisms $g: U_3 \to U_2$ and $f: U_2 \to
U_1$ in $Sch/X$ one has
$$\sigma_{f\circ g}=(f\times\id_Y)_*(\sigma_g) \circ \sigma_f:
\mathcal P^{\Phi}_{U_1} \to (F \circ G)_*(\mathcal
P^{\Phi}_{U_3})=F_*(G_*(\mathcal P^{\Phi}_{U_3})).$$

\begin{lem}\label{objekt}
The data $U/X \mapsto \mathcal P^{\Phi}_{U}$ and $(f: V \to U)
\mapsto (\sigma_f: \mathcal P^{\Phi}_U \to F_*(\mathcal
P^{\Phi}_V))$ defined above determine an object of the category
$\tilde {\mathcal P}(X,Y)$. We shall denote this object by
$F_{X,Y}(\Phi)$.
\end{lem}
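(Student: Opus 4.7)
The plan is to verify the three defining conditions for an object of $\tilde{\cc P}(X,Y)$: (i) for every $U/X\in Sch/X$ the sheaf $\cc P^{\Phi}_U$ belongs to $\cc P(U,Y)$, that is, is coherent on $U\times Y$, has support finite over $U$, and has $(p_U)_*\cc P^{\Phi}_U$ locally free of finite rank on $\cc O_U$; (ii) the transition maps $\sigma_f$ satisfy the cocycle identity $\sigma_{f\circ g}=F_*(\sigma_g)\circ\sigma_f$ and $\sigma_{\id_U}=\id$; and (iii) the adjoint $F^*\cc P^{\Phi}_U\to\cc P^{\Phi}_V$ of $\sigma_f$ is an isomorphism of $\cc O_{V\times Y}$-modules, which is the base-change condition inherent in the big-bimodule formalism.

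For (i), I would reduce to the case of affine $U$ via the affine cover gluing of Definition~\ref{PhiAndBigBimodule}; on an affine $U$, $\cc P^{\Phi}_U$ is the quasicoherent sheaf on $U\times Y$ associated with the $k[U]\otimes_k k[Y]$-module $P^{\Phi}_U=\im(p^{\Phi}_U)\subset k[U]^n$, the $k[Y]$-action being given by $\Phi_U$. Since $p^{\Phi}_U=\Phi_U(1)$ is an idempotent in $M_n(k[U])$, the module $P^{\Phi}_U$ is a direct summand of $k[U]^n$, so $(p_U)_*\cc P^{\Phi}_U$ is locally free of finite rank as an $\cc O_U$-module. The $k[Y]$-action on $P^{\Phi}_U$ factors through $\phi$ pulled back along $U\to X$, so $\cc P^{\Phi}_U$ is set-theoretically supported in the pullback of $Z$ along $U\to X$, which is finite over $U$ by the definition of $\text{SubSch}(X\times Y/Y)$. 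This simultaneously yields coherence and finite support.

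For (ii), the cocycle identity was already pointed out in the construction preceding the lemma; it can be verified stalkwise, because at compatible points the composition $p^{\Phi}_{V,v}\circ f^*\circ i^{\Phi}_{U,u}$ is functorial in $f$ and the projectors are idempotent, while $\sigma_{\id_U}=\id$ is immediate from the definition. For (iii), the key observation is that $P^{\Phi}$ is compatible with arbitrary change of base in $U$: the equation $\Phi_V=M_n(f^*)\circ\Phi_X$ gives $p^{\Phi}_V=M_n(f^*)(p^{\Phi}_U)$, and since extension of scalars commutes with the image of an idempotent, one obtains a canonical isomorphism $k[V]\otimes_{k[U]}P^{\Phi}_U\xrightarrow{\sim}P^{\Phi}_V$ of $k[V]\otimes_k k[Y]$-modules. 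Passing through the localizations of Definition~\ref{StalkOfPhiUq} and gluing via Definition~\ref{PhiAndBigBimodule} turns this affine-local isomorphism into an isomorphism $F^*\cc P^{\Phi}_U\to\cc P^{\Phi}_V$ of sheaves on $V\times Y$.

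The main obstacle I anticipate is the bookkeeping for (iii): the stalks in Definition~\ref{StalkOfPhiUq} mix the $\cc O_U$-localization and the $k[Y]$-action, and one must check that the gluing data of Definition~\ref{PhiAndBigBimodule} is compatible with $\sigma_f$ on affine covers of both source and target. Once the scalar-extension isomorphism $k[V]\otimes_{k[U]}P^{\Phi}_U\cong P^{\Phi}_V$ is translated into a sheaf-theoretic statement on $V\times Y$ and verified to be induced by the adjoint of $\sigma_f$, the remaining conditions for membership in $\tilde{\cc P}(X,Y)$ reduce to routine verifications.
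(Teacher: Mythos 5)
The paper states this lemma without proof, treating the verification as routine, and your argument is the natural way to supply it: you correctly check the three defining conditions of an object of $\tilde{\cc P}(X,Y)$ — that each $\cc P^{\Phi}_U$ lies in $\cc P(U,Y)$ (via the idempotent $p^{\Phi}_U$ and the support lying in $Z\times_X U$), the identity and cocycle conditions (the latter already recorded in the paper just before the lemma), and the base-change condition via the scalar-extension isomorphism $k[V]\otimes_{k[U]}P^{\Phi}_U\cong P^{\Phi}_V$. No gaps; this matches what the authors evidently had in mind.
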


Now define the functor $F_{X,Y}:\cc A(X,Y)\to\tilde{\cc P}(X,Y)$ on
morphisms. Let $\alpha: \Phi \to \Psi$ be a morphism in $\cc
A(X,Y)$. The morphism $\alpha$ is a Zariski sheaf morphism
$$(U/X \mapsto P^{\Phi}_U) \to (U/X \mapsto P^{\Psi}_U)$$
on small Zariski site $X_{Zar}$ respecting the $k[Y]$-module
structure on both sides. We write $\alpha_U: P^{\Phi}_U \to
P^{\Psi}_U$ for the value of $\alpha$ at $U$. For any point $x \in
X$ the Zariski sheaf morphism $\alpha$ induces a morphism of stalks
   $$\alpha_x: P^{\Phi}_{X,x} \to P^{\Psi}_{X,x}.$$
Finally, for any point $q \in X \times Y$ and its image $x=p_X(q)
\in X$ one has a homomorphism
   $$\alpha_q: \mathcal P^{\Phi}_{X,q} \to \mathcal P^{\Psi}_{X,q}$$
given by $\alpha_q\bigl(\frac{m}{g}\bigr)= \frac{\alpha_x(m)}{g}$
for any $m \in P^{\Phi}_{X,x}$ and any $g \in \mathcal O_{X,x}
\otimes_k k[Y]$ with $g(q) \neq 0$.

\begin{defs}{\rm
Define a morphism $F_{X,Y}(\alpha): F_{X,Y}(\Phi) \to F_{X,Y}(\Psi)$
as follows. Given a Zariski open subset $W\subset X\times Y$ and a
section $s=(n_q)\in\cc P^{\Phi}_X(W)$, set
$\alpha_{W}(s)=(\alpha_q(n_q))$. Clearly, the family
$(\alpha_q(n_q))$ is an element of $\cc P^{\Psi}_X(W)$. Moreover,
$\alpha_{W}$ is a homomorphism and the assignment $W \mapsto
\alpha_{W}$ is a morphism in $\tilde {\cc P}(X,Y)$. We shall write
$F_{X,Y}(\alpha)$ for this morphism in $\tilde {\cc P}(X,Y)$.

}\end{defs}

\begin{lem}\label{AtoPtransform}
The assignments $\Phi \mapsto F_{X,Y}(\Phi)$ and $\alpha \mapsto
F_{X,Y}(\alpha)$ determine an additive functor $F_{X,Y}: \cc A(X,Y)
\to \tilde {\cc P}(X,Y)$. Moreover, for a given affine $k$-smooth
variety $Y$ the assignment $X \mapsto F_{X,Y}$ determines a morphism
of presheaves of additive categories.
\end{lem}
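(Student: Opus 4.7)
The plan is to check three things in sequence: (i) that $F_{X,Y}$ is a functor, i.e.\ respects identity and composition; (ii) that it is additive, i.e.\ preserves zero, sums of morphisms, and finite biproducts; and (iii) that the assignment $X\mapsto F_{X,Y}$ strictly intertwines the pullback functors $f^{*}$ on both sides.

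For functoriality on morphisms I would argue one layer of the construction at a time. For the identity $\id_{\Phi}$ in $\cc A(X,Y)$, the associated morphism of presheaves of $k[U]\otimes_{k} k[Y]$-modules $U/X\mapsto P^{\Phi}_{U}$ is the identity on each $P^{\Phi}_{U}$; hence the induced stalk maps $P^{\Phi}_{X,x}\to P^{\Phi}_{X,x}$ are identities, the induced maps on $\cc P^{\Phi}_{X,q}$ are identities, and the resulting morphism of Zariski sheaves is $\id_{F_{X,Y}(\Phi)}$. For the composition $\beta\circ\alpha$ of two morphisms in $\cc A(X,Y)$, the hypothesis that $\alpha,\beta$ are morphisms of $p_{X,*}(\cc O_{Z})$-modules (after equalising supports) forces $(\beta\circ\alpha)_{U}=\beta_{U}\circ\alpha_{U}$ for every $U/X$; passing to stalks and to localisations on $X\times Y$ commutes with composition, and the formula $\alpha_{q}(m/g)=\alpha_{x}(m)/g$ of Definition preceding Lemma~\ref{AtoPtransform} shows that $F_{X,Y}(\beta\circ\alpha)_{W}=F_{X,Y}(\beta)_{W}\circ F_{X,Y}(\alpha)_{W}$ on sections over any open $W\subset X\times Y$.

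Additivity is essentially formal. The abelian group structure on $\Hom_{\cc A(X,Y)}(\Phi,\Psi)=\Hom_{p_{X,*}(\cc O_{Z})}(P(\phi),P(\psi))$ is pointwise addition of $\cc O_{X}$-linear sheaf maps; this addition is preserved when we restrict to the direct summands $P^{\Phi}_{U}\subset\cc O_{U}^{n}$ for each $U/X$, hence at stalks, hence on each $\cc P^{\Phi}_{X,q}$, and finally on sections. For direct sums, the biproduct $\Phi_{1}\oplus\Phi_{2}$ described just after Definition~\ref{ccAobjects} satisfies $P(\phi_{1}\oplus\phi_{2})\cong P(\phi_{1})\oplus P(\phi_{2})$; localising this splitting gives $\cc P^{\Phi_{1}\oplus\Phi_{2}}_{U,q}\cong\cc P^{\Phi_{1}}_{U,q}\oplus\cc P^{\Phi_{2}}_{U,q}$ at every point, and the resulting sheaf decomposition is exactly the biproduct in $\tilde{\cc P}(X,Y)$ defined in the Appendix. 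Preservation of the zero object (the triple $(0,\varnothing,0)$) is immediate.

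The naturality in $X$ is the most delicate step, and I would regard it as the main obstacle. For a morphism $f:X'\to X$ in $Sm/k$, recall from Notation~\ref{f^*andg_*} that $f^{*}\Phi=\Phi\circ\sigma(f)$, and that $f^{*}$ on $\tilde{\cc P}(X,Y)$ is defined in the Appendix via restriction along the induced functor $Sch/X'\to Sch/X$. The key observation is that the recipe $U/X\mapsto\Phi_{U}=M_{n}(f_{U}^{*})\circ\Phi_{X}$ only depends on $U$ together with its structure morphism to $X$; consequently, for $U/X'$ regarded as an $X$-scheme via $f$, one has a strict equality $(f^{*}\Phi)_{U}=\Phi_{U}$ of non-unital $k$-algebra homomorphisms $k[Y]\to M_{n}(k[U])$, and therefore $P^{f^{*}\Phi}_{U}=P^{\Phi}_{U}$ as $k[U]\otimes_{k}k[Y]$-modules. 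Passing to stalks over points of $X'\times Y$ and then to the sheafified construction of Definition~\ref{PhiAndBigBimodule} yields $F_{X',Y}(f^{*}\Phi)=f^{*}(F_{X,Y}(\Phi))$ as objects of $\tilde{\cc P}(X',Y)$; the same bookkeeping applied to a morphism $\alpha$ in $\cc A(X,Y)$ gives $F_{X',Y}(f^{*}\alpha)=f^{*}(F_{X,Y}(\alpha))$. The hard part is keeping the equalities strict rather than merely canonically isomorphic, and here one relies on the fact that $\tilde{\cc P}$ (unlike $\cc P$) was designed precisely so that $(g\circ f)^{*}=f^{*}\circ g^{*}$ on the nose (Lemma~\ref{derev}); this is what makes $X\mapsto F_{X,Y}$ a genuine morphism of presheaves of additive categories, and not just a pseudonatural transformation.
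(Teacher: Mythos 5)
The paper states Lemma~\ref{AtoPtransform} without proof, treating it as a routine verification, so there is no argument of the authors' to compare against; your outline supplies exactly the checks that are being left to the reader, and the decomposition into functoriality, additivity, and strict compatibility with $f^*$ is the right one. The one place where your plan is quicker than it should be is the step ``consequently, one has a strict equality $(f^{*}\Phi)_{U}=\Phi_{U}$'': the fact that $\Phi_U$ depends only on $U/X$ does not by itself give this; you must first verify that $(f^{*}\Phi)_{X'}=M_n(f^{*})\circ\Phi_X$ as non-unital homomorphisms $k[Y]\to M_n(k[X'])$, which requires unwinding the composition $\Phi\circ\sigma(f)$ in $\cc A$ via diagram~\eqref{composition} with $\sigma(f)=(1,\Gamma_f,\id)$ and checking that the support $Z\circ\Gamma_f$ and the homomorphism $\phi\circ\id$ induce precisely the pullback along $f$ on global sections (the paper's remark that $\Phi_X$ is independent of the chosen representative is what makes this well posed). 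Likewise, on morphisms, recall that $f^*$ on $\tilde{\cc P}(X,Y)$ sends $\alpha_{X,Y}$ to $\tau_f\circ(f\times 1_Y)^*(\alpha_{X,Y})\circ\tau_f^{-1}$, so the required identity $F_{X',Y}(f^*\alpha)=f^*(F_{X,Y}(\alpha))$ uses that your stalkwise maps $F^*_r$ are compatible with the $\alpha_q$ on both sides; this is the same bookkeeping but should be said. With those two points made explicit, the proof is complete and matches what the authors evidently had in mind.
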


Lemma~\ref{AtoPtransform} implies that in order to prove
Theorem~\ref{AandP}, it remains to check that for affine $X,Y\in
\text{AffSm}/k$ the functor $F_{X,Y}$ is an equivalence of
categories. Firstly describe a plan of the proof. Given $X,Y\in
\text{AffSm}/k$ we shall construct a square of additive categories
and additive functors
\begin{equation}
    \xymatrix{\cc A(X,Y) \ar[r]^{F_{X,Y}}\ar[d]_{\Gamma} &\tilde{\cc P}(X,Y) \ar[d]^{R}\\
              A(X,Y)\ar[r]_{a_{X,Y}} &\cc P(X,Y)}
   \end{equation}
which commutes up to an isomorphism of additive functors. We shall
prove that the functors $\Gamma$, $a_{X,Y}$ and $R$ are equivalences
of categories. As a consequence, the functor $F_{X,Y}$ will be an
equivalence of categories.

\begin{defs}\label{catA}{\rm
For affine schemes $X,Y \in \text{AffSm}/k$ define a category
$A(X,Y)$ as follows. Objects of $A(X,Y)$ are the pairs $(n,\phi)$,
where $n \geq 0$ and $\phi: k[Y] \to M_n(k[X])$ is a non-unital
$k$-algebra homomorphism. The homomorphism $\phi$ defines a
projector $\phi(1) \in M_n(k[X])$. The projector $\phi(1)$ defines a
projective $k[X]$-module $\im\bigl(\phi(1): k[X]^n \to
k[X]^n\bigr)$. This $k[X]$-module has also a $k[Y]$-module structure
which is given by the non-unital homomorphism $\phi$. Namely, $mf:=
\phi(f)(m)$. Thus $\im(\phi(1))$ is a $k[X \times Y]$-module. Set
   $$\Mor_{A(X,Y)}((n_1, \phi_1),(n_1, \phi_1))= \Hom_{k[X \times Y]}(\im(\phi_1(1)), \im(\phi_2(1))).$$

}\end{defs}

\begin{defs}\label{ccAtoA}{\rm
Given affine schemes $X,Y\in \text{AffSm}/k$, define a functor
   $$\Gamma:\cc A(X,Y)\to A(X,Y)$$
as follows. Given an object $\Psi \in \cc A(X,Y)$, choose its
representative $(n,Z,\psi: p_{X,*}(\mathcal O_Z) \to M_n(\mathcal
O_X))$. This representative gives rise to a pair
   $$\Gamma(\Psi):=(n, \phi: k[Y] \to k[X \times Y] \to \Gamma(Z, \mathcal O_Z) \xrightarrow{\psi} M_n(k[X]) ),$$
which is an object of $A(X,Y)$. Clearly, this pair does not depend
on the choice of a representative.
One has an equality $\Gamma(X,P(\psi))= P^{\Psi}_X$, where $P(\psi)$
is defined in Definition~\ref{ccAobjects} and $P^{\Psi}_X$ is given
by~\eqref{PPhi_U}.
If $\alpha:\Psi_1\to\Psi_2$ is a morphism in $\cc A(X,Y)$, then
equalizing the supports of $\Psi_1$ and $\Psi_2$ and taking the
global sections on $X$, we get an isomorphism
   \begin{gather*}
    \Mor_{\cc A(X,Y)}(\Psi_1, \Psi_2)=\Hom_{p_{X,*}(\cc O_Z)}(P(\psi_1),P(\psi_2))\xrightarrow{\Gamma(\alpha)}\\
    \xrightarrow{\Gamma(\alpha)}\Hom_{k[X \times Y]}(P^{\Psi_1}_X,P^{\Psi_1}_X)=\Hom_{A(X,Y)}(\Gamma(\Psi_1), \Gamma(\Psi_2)).
   \end{gather*}
This completes the definition of the functor $\Gamma$.

}\end{defs}

\begin{lem}\label{ccAandA}
The functor $\Gamma: \cc A(X,Y) \to A(X,Y)$ is an equivalence of
additive categories.
\end{lem}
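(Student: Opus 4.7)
My plan is to verify separately that $\Gamma$ is fully faithful and essentially surjective.

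For full faithfulness, I would first use the equivalence relation on triples to represent two given objects $\Psi_1,\Psi_2\in\cc A(X,Y)$ over a common $Z\in\text{SubSch}(X\times Y/Y)$, so that $\Hom_{\cc A(X,Y)}(\Psi_1,\Psi_2)=\Hom_{p_{X,*}(\cc O_Z)}(P(\psi_1),P(\psi_2))$. Since $X$ is affine and $p_X:Z\to X$ is finite, the sheaves in question are coherent on $X$, and the global sections functor identifies this Hom-set with $\Hom_{\Gamma(Z,\cc O_Z)}(P^{\Psi_1}_X,P^{\Psi_2}_X)$. Because the $k[X\times Y]$-action on each $P^{\Psi_i}_X$ factors through the surjection $k[X\times Y]\twoheadrightarrow\Gamma(Z,\cc O_Z)$, this coincides with $\Hom_{k[X\times Y]}(P^{\Psi_1}_X,P^{\Psi_2}_X)=\Hom_{A(X,Y)}(\Gamma(\Psi_1),\Gamma(\Psi_2))$, and the identification sends $\alpha$ to $\Gamma(\alpha)$.

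For essential surjectivity, given $(n,\phi)\in A(X,Y)$, I would form the finitely generated projective $k[X]$-module $P=\im\phi(1)\subset k[X]^n$ with its induced $k[X\times Y]$-module structure, and extend $\phi$ to the $k[X]$-algebra map $\tilde\phi:k[X\times Y]\to M_n(k[X])$, $a\otimes b\mapsto a\cdot\phi(b)$. The image of $\tilde\phi$ is a $k[X]$-subalgebra of $\End_{k[X]}(P)$, hence finite over $k[X]$, so $V(\ker\tilde\phi)\to X$ is a finite morphism. I then need to choose a reduced closed subset $A\supseteq V(\ker\tilde\phi)^{\mathrm{red}}$ lying in $\text{Supp}(X\times Y/Y)$ together with an integer $m$ large enough that $I_A^m\subseteq\ker\tilde\phi$; the map $\tilde\phi$ then factors through $\Gamma(A_m,\cc O_{A_m})\to M_n(k[X])$, giving the desired $\psi:p_{X,*}(\cc O_{A_m})\to M_n(\cc O_X)$ by sheafification on the affine $X$. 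By construction $\Gamma(n,A_m,\psi)=(n,\phi)$.

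The hard part will be producing a suitable $A\in\text{Supp}(X\times Y/Y)$, which requires that the irreducible components of the (enlarged) support be finite and surjective over the connected components of $X$. The crucial input is that $P$ is locally free on the smooth variety $X$, so on each connected component it has well-defined rank; the scheme-theoretic support of $P$ in $X\times Y$ must surject onto each positive-rank component. Using the characteristic polynomial of each $y\in k[Y]$ acting $k[X]$-linearly on $P$, one shows that every irreducible component of the reduced support is in fact finite and surjective over the corresponding connected component of $X$, yielding the required $A$.
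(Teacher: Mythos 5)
Your proposal is correct and follows essentially the same route as the paper: the paper constructs an explicit quasi-inverse $a:A(X,Y)\to\cc A(X,Y)$ whose object-level definition is exactly your essential-surjectivity construction (factor $\tilde\phi$ through $k[X\times Y]/I_A^m$ after checking $A\in\supp(X\times Y/Y)$, the point you rightly flag as the one needing the local freeness of $\im\phi(1)$), and whose morphism-level definition is the inverse of your global-sections identification of Hom-sets. The only difference is presentational (quasi-inverse versus fully faithful plus essentially surjective).
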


\begin{proof}
Define a functor $a: A(X,Y) \to \cc A(X,Y)$ on objects as follows.
An object $(n, \phi)$ in $A(X,Y)$ defines a projector $\phi(1) \in
M_n(k[X])$. The image $\im(\phi(1))$ in $k[X]^n$ has a $k[Y]$-module
structure given by the non-unital homomorphism $\phi$. In this way
$\im(\phi(1))$ is a $k[X \times Y]$-module. Let $A \subset X \times
Y$ be the support of $\im(\phi(1))$. Using Notation~\ref{Supp}, it
is easy to see that $A\in\supp(X \times Y/Y)$.
Thus there exists an
integer $m\geq 0$ such that $I^m_A \cdot\im(\phi(1))=(0)$. The
latter means that $\im(\phi(1))$ is a $k[X\times Y]/I^m_A$-module,
and therefore the non-unital $k$-algebra homomorphism $\phi$ can be
presented in the form
   $$k[X \times Y] \xrightarrow{can_{A,m}} k[X \times Y]/I^m_A \xrightarrow{\bar \phi_{A,m}} M_n(k[X])$$
for a unique $\bar\phi_{A,m}$. Let $Z=\spec(k[X \times Y]/I^m_A)$
and let $(\bar \phi_{A,m})^{\sim}: p_{X,*}(\mathcal O_Z) \to
M_n(\mathcal O_X)$ be the sheaf homomorphism associated to $\bar
\phi_{A,m}$. Set
   $$a(n,\phi):=\textrm{ the equivalence class of the triple }(n, Z, (\bar \phi_{A,m})^{\sim}).$$
This equivalence class remains unchanged when enlarging $A$ in
$\supp(X\times Y/Y)$ and the integer $m$. In fact, if $A^{\prime}
\in \supp(X \times Y/Y)$ is such that $A \subset A^{\prime}$ and
$m^{\prime} \geq m$, then $I^{m^{\prime}}_{A^{\prime}} \subset
I^m_A$. Thus $\phi=\bar \phi_{A^{\prime},m^{\prime}} \circ
can_{A^{\prime},m^{\prime}}$ for a unique
$\bar\phi_{A^{\prime},m^{\prime}}$. Let $Z^{\prime}=\spec(k[X \times
Y]/I^{m^{\prime}}_{A^{\prime}})$ and let $(\bar
\phi_{A^{\prime},m^{\prime}})^{\sim}: p_{X,*}(\mathcal
O_{Z^{\prime}}) \to M_n(\mathcal O_X)$ be the sheaf homomorphism
associated to $\bar \phi_{A^{\prime},m^{\prime}}$. Clearly, the
equivalence class of the triple $(n, Z, (\bar \phi_{A,m})^{\sim})$
coincides with the equivalence class of the triple $(n, Z^{\prime},
(\bar \phi_{A^{\prime},m^{\prime}})^{\sim})$.

Define the functor $a: A(X,Y) \to \cc A(X,Y)$ on morphisms as
follows. Let $\alpha: (n_1,\phi_1)\to (n_2,\phi_2)$ be a morphism in
$A(X,Y)$. Let $A_i$ be the support of the $k[X\times Y]$-module
$\im(\phi_i(1))$ and let $m_i$ be an integer such that $I^{m_i}_A
\cdot\im(\phi_i(1))=(0)$. Enlarging $A_1$ and $A_2$ in
$\supp(X\times Y/Y)$ if necessary, we may assume that $A_1=A=A_2$.
Enlarging $m_1$ and $m_2$, we may as well assume that $m_1=m=m_2$.
Therefore we may assume that $Z_1=Z=Z_2$. Now applying the functor
from $k[X]$-modules to $\mathcal O_X$-modules, we get a homomorphism
   \begin{gather*}
    \Hom_{A(X,Y)}((n_1, \phi_1), (n_1, \phi_1))=\Hom_{k[X \times Y]}(\im(\phi_1(1)),\im(\phi_2(1)))=\\
    =\Hom_{k[X\times Y]/I^m_A}(\im(\bar\phi_{1,A,m}(1)),\im(\bar\phi_{2,A,m}))\to\Hom_{p_{X,*}(\cc O_Z)}(\im(\bar
    \phi_1(1))^{\sim},\im(\bar \phi_2(1))^{\sim}).
   \end{gather*}
Set $a(\alpha)$ to be the image of $\alpha$ under this homomorphism.
The definition of the functor $a$ is completed.

It is straightforward to check that the functors $\Gamma$ and $a$
are mutually inverse equivalences of additive categories. For
instance, the composite $a \circ \Gamma$ is the identity functor
from $A(X,Y)$ to itself.
\end{proof}

\begin{defs}\label{a_XY}{\rm
Define a functor $a_{X,Y}: A(X,Y)\to\cc P(X,Y)$ as follows. It takes
an object $(n,\phi)$ to the $\mathcal O_{X\times Y}$-module sheaf
$\im(\phi(1))^{\sim}$ associated with the $k[X\times Y]$-module
$\im(\phi(1))$ described in Definition~\ref{catA}. On morphisms it
is defined by the isomorphism
   \begin{gather*}
    \Hom_{A(X,Y)}((n_1,\phi_1),(n_2,\phi_2))=\Hom_{k[X\times Y]}(\im(\phi_1(1)),\im(\phi_2(1)))\cong \\
    \cong \Hom_{\mathcal O_{X\times Y}}(\im(\phi_1(1))^\sim,\im(\phi_2(1))^\sim)=\Hom_{\cc P(X,Y)}(a_{X,Y}(n_1,\phi_1),a_{X,Y}(n_2,\phi_2)).
   \end{gather*}
}\end{defs}

\begin{proof}[Proof of Theorem~\ref{AandP}]
Consider the following square of functors
\begin{equation*}
    \xymatrix{\cc A(X,Y) \ar[r]^{F_{X,Y}}\ar[d]_{\Gamma} &\tilde{\cc P}(X,Y) \ar[d]^{R}\\
              A(X,Y)\ar[r]_{a_{X,Y}} &\cc P(X,Y)}
   \end{equation*}
where $R$ takes a big bimodule $P \in \tilde P(X,Y)$ to the
$\mathcal O_{X\times Y}$-module $P_{X,Y} \in\cc P(X,Y)$ and a
morphism $\alpha: P \to Q$ of big bimodules to the morphism
$\alpha_{X,Y}: P_{X,Y} \to Q_{X,Y}$ of $\mathcal O_{X\times
Y}$-modules. We claim that this diagram commutes up to an
isomorphism of functors. Since the functors $\Gamma$, $a_{X,Y}$, $R$
are equivalences of categories, the functor $F_{X,Y}$ is a category
equivalence, too. To complete the proof, it remains to construct a
functor isomorphism $a_{X,Y}\circ\Gamma\to R\circ F_{X,Y}$.

Let $\Psi \in \cc A(X,Y)$ be an object and let
$(n,Z,\psi:p_{X,*}(\cc O_Z)\to M_n(\cc O_X))$ be a triple
representing $\Psi$ (see Definition~\ref{ccAobjects}). Then
$\Gamma(\Psi)=(n, \phi: k[Y] \to k[X \times Y] \to \Gamma(Z,
\mathcal O_Z) \xrightarrow{\psi} M_n(k[X]))$ as described in
Definition~\ref{ccAtoA}. Let $\im(\phi(1))$ be the $k[X \times
Y]$-module described in Definition~\ref{catA}. Then
$a_{X,Y}(\Gamma(\Psi))$ is the $\mathcal O_{X\times Y}$-module sheaf
$\im(\phi(1))^{\sim}$ associated with the $k[X\times Y]$-module
$\im(\phi(1))$. On the other hand, following
Definition~\ref{PhiAndBigBimodule} and the description of $R$, one
has $R(F_{X,Y}(\Psi))=\mathcal P^{\Psi}_X$. We need to construct an
isomorphism $\theta_{\Psi}:
\im(\phi(1))^{\sim}\xrightarrow\cong\mathcal P^{\Psi}_X$, natural in
$\Psi$, of $\mathcal O_{X\times Y}$-modules. Giving such a morphism
$\theta_{\Psi}$ is the same as giving a $k[X \times Y]$-homomorphism
   $$\Theta_{\Psi}:\im(\phi(1))\to\Gamma(X\times Y,\mathcal P^{\Psi}_X).$$
Moreover, $\theta_{\Psi}$ is an isomorphism whenever so is
$\Theta_{\Psi}$. A section of $\mathcal P^{\Psi}_X$ on $X\times Y$
is a compatible family of elements $(n_q \in \mathcal
P^{\Phi}_{X,q})_{q \in X\times Y}$ (see
Definitions~\ref{PhiAndBigBimodule} and~\ref{StalkOfPhiUq}). For $s
\in\im(\phi(1))$, set
   $$\Theta_{\Psi}(s)=\biggl(\frac{s_{x(q)}}{1}\biggr)\in \prod_{{q \in X\times Y}}\mathcal P^{\Psi}_{X,q},$$
where $x(q)=p_X(q) \in X$ and $s_{x(q)} \in P^{\Psi}_{X,x(q)}$ is
the image of $s$ in $P^{\Psi}_{X,x(q)}$ under the canonical map
$P^{\Psi}_X=\im(p^{\Psi}_X)\to\im(p^{\Psi}_{X,x(q)})=
P^{\Psi}_{X,x(q)}$ (see the discussion above
Definition~\ref{StalkOfPhiUq}). Clearly, $\Theta_{\Psi}(s)$ belongs
to $\Gamma(X\times Y,\mathcal P^{\Psi}_X)$. We claim that
$\Theta_{\Psi}$ is an isomorphism. In fact, if
$\frac{s_{x(q)}}{1}=0$ for all $q \in X\times Y$ then $s=0$. It
follows that $\Theta_{\Psi}$ is injective. If
$(n_q)\in\Gamma(X\times Y,\mathcal P^{\Psi}_X)$, then
$(n_q)\in\prod_{{q \in X\times Y}}\mathcal P^{\Psi}_{X,q}$ is a
compatible family of elements. It follows from Definition
\ref{PhiAndBigBimodule} that there is a global section $s$ of the
sheaf $\im(\phi(1))^{\sim}$ such that for each $q \in X\times Y$ one
has $s_q=n_q$. Since $\Gamma(X\times Y,
\im(\phi(1))^{\sim})=\im(\phi(1))$ the map $\Theta_{\Psi}$ is
surjective. The fact that the assignment $\Psi \mapsto
\Theta_{\Psi}$ is a functor transformation $a_{X,Y} \circ \Gamma \to
R\circ F_{X,Y}$ is obvious. Our theorem now follows.
\end{proof}

Let $DK_-^{\text{\rm eff}}(k)$ be the triangulated category of
$K$-motives in the sense of Definition~\ref{prekrasno}. Recall that
the $\bb K$-motive $M_{\bb K}(X)$ of a $k$-smooth scheme $X$ is the
$\bb K$-module $C_*(\bb K(-,X))$ (see Definition~\ref{omotive} and
Notation~\ref{kux}). The $\bb K$-motive $M_{\bb K}(X)$ belongs to
the category $DK_-^{\text{\rm eff}}(k)$ as observed just below the
proof of Corollary \ref{porto}. To conclude the section, we give the
following application of Theorem~\ref{AandP}.

\begin{thm}\label{vesmaneploho}
Let $k$ be a perfect field and let $X$ be any scheme in
$\text{Sm}/k$. Then for every integer $i\in\bb Z$ there is a natural
isomorphism of abelian groups
   $$K_i(X)\cong DK_-^{\text{\rm eff}}(k)(M_{\bb K}(X)[i],M_{\bb K}(pt)),$$
where $K(X)$ is Quillen's $K$-theory of $X$.
\end{thm}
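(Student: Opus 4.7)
The plan is to reduce the Hom group in $DK_-^{eff}(k)$ to the homotopy groups of $M_{\bb K}(pt)$ evaluated at $X$ via the adjunction of Theorem~\ref{neploho}(2) and an enriched Yoneda argument, then identify the resulting spectrum with Quillen's $K$-theory using the comparison functor $F_{-,pt}$ of Theorem~\ref{AandP}, Nisnevich descent, and homotopy invariance of $K$-theory.

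First, since $M_{\bb K}(X)=C_*(\bb K(-,X))$ and both $M_{\bb K}(X)[i]$ and $M_{\bb K}(pt)$ lie in $DK_-^{eff}(k)$ by Corollary~\ref{porto}, the adjunction $C_*\dashv i$ of Theorem~\ref{neploho}(2) gives a natural isomorphism
\begin{equation*}
DK_-^{eff}(k)(M_{\bb K}(X)[i],M_{\bb K}(pt))\cong D\bb K_-(k)(\bb K(-,X)[i],M_{\bb K}(pt)).
\end{equation*}
Representable $\bb K$-modules are projective-cofibrant, so the enriched Yoneda lemma in the Nisnevich local model structure identifies the right-hand side with $\pi_i\bigl((M_{\bb K}(pt))_f(X)\bigr)$, where $(-)_f$ denotes Nisnevich local fibrant replacement in $\Mod\bb K$.

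Next, unfolding $M_{\bb K}(pt)=C_*(\bb K(-,pt))$ gives $M_{\bb K}(pt)(X)=|n\mapsto\bb K(-,pt)_f(X\times\Delta^n)|$. By Theorem~\ref{ooops}, $\bb K$ is Nisnevich excisive, hence the presheaf $\bb K(-,pt)$ satisfies Nisnevich descent and $\bb K(-,pt)_f(Y)\simeq\bb K(Y,pt)$ for every $Y\in Sm/k$. On an affine $Y$, Theorem~\ref{AandP} yields a natural equivalence $\bb K(Y,pt)=K(\cc A(Y,pt))\simeq K(\tilde{\cc P}(Y,pt))$, and since $\tilde{\cc P}(Y,pt)$ is equivalent to the exact category of vector bundles on $Y$, this is Quillen's $K$-theory $K(Y)$. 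Nisnevich descent for $K$-theory on regular schemes (Thomason--Trobaugh) extends the comparison to all $Y\in Sm/k$, so $M_{\bb K}(pt)(X)\simeq|n\mapsto K(X\times\Delta^n)|$. By Quillen's $\bb A^1$-homotopy invariance of $K$-theory on regular schemes each structure map $K(X\times\Delta^n)\to K(X\times\Delta^{n-1})$ is a weak equivalence, so the simplicial spectrum is levelwise equivalent to the constant one at $K(X)$ and its realization has homotopy groups $K_i(X)$. Assembling the identifications yields the desired natural isomorphism $DK_-^{eff}(k)(M_{\bb K}(X)[i],M_{\bb K}(pt))\cong K_i(X)$.

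The main obstacle is the verification that $M_{\bb K}(pt)_f(X)$ agrees with $M_{\bb K}(pt)(X)$ for every smooth $X$, not only on affines, and that the realization of the simplicial $\bb K$-module $n\mapsto\underline{\Hom}(\Delta^n,\bb K(-,pt)_f)$ inherits Nisnevich local fibrancy from its levels. This rests on Nisnevich excisivity of $\bb K$ together with the fact that $\underline{\Hom}(\Delta^n,-)$ preserves Nisnevich local fibrancy, since $\Delta^n\in AffSm/k$ and $\bb K$ carries the strict $\boxdot$-action of Corollary~\ref{tyty}. Once these descent properties are in place, the equivalence $\bb K(-,pt)\simeq K(-)$ outside the affine case follows from Theorem~\ref{AandP} by Nisnevich descent, completing the argument.
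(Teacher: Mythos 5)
Your proposal follows essentially the same route as the paper: reduce to $D\bb K_-(k)(\bb K(-,X)[i],M_{\bb K}(pt))$ via the adjunction of Theorem~\ref{neploho}(2), identify $M_{\bb K}(pt)$ Nisnevich-locally with the $K$-theory presheaf using Theorem~\ref{AandP}, and finish with Thomason's Nisnevich descent and homotopy invariance of $K$-theory; the paper merely organizes this at the presheaf level (showing $\bb K(-,pt)\to M_{\bb K}(pt)$ is a Nisnevich local weak equivalence and then invoking the adjunction~\eqref{adjoint}) rather than evaluating fibrant replacements at $X$. One justification you give is not valid as stated: Nisnevich excisivity of the spectral category $\bb K$ means the squares of representables $\bb K(-,-)$ are homotopy pushouts in the \emph{localized} model structure, which does not imply that the fixed presheaf $\bb K(-,pt)$ satisfies Nisnevich descent (i.e.\ that $\bb K(-,pt)\to\bb K(-,pt)_f$ is an objectwise equivalence); that descent property must instead be deduced, as the paper does, from the Nisnevich local equivalence $F:\bb K(-,pt)\to K$ of Theorem~\ref{AandP} combined with Thomason's theorem for $K$. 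Since you invoke exactly these two ingredients in the next step, the error is repairable without new input, but as written the sentence ``$\bb K$ is Nisnevich excisive, hence $\bb K(-,pt)_f(Y)\simeq\bb K(Y,pt)$ for every $Y$'' is a non sequitur (and the conclusion is only known for affine $Y$ in any case).
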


A priori, there is no reason for the right hand side to be zero for
$i<0$. However, Theorem~\ref{AandP} and the fact that $K$-theory of
$X$ is connective imply this is the case.

\begin{proof}
By Theorem~\ref{ooops} $\bb K$ is a strict $V$-spectral category.
By~\eqref{adjoint} one has a canonical isomorphism for every integer
$i$
   $$\shnis(k)(X[i],{\bb K}(-,pt))\cong\shnis\bb K(\bb K(-,X)[i],\bb K(-,pt)).$$
Let
   $$K:\text{Sm}/k\to Sp^\Sigma,\quad X\mapsto K(X)=K(\tilde{\cc P}(X,pt))$$
be the algebraic $K$-theory presheaf of symmetric spectra. It
follows from Theorem~\ref{AandP} that the natural map in
$Pre^\Sigma(\text{Sm}/k)$
   $$F:\bb K(-,pt)\to K,$$ induced by the additive functors
$F_{X,pt}:\cc A(X,pt)\to\tilde{\cc P}(X,pt)$, $X\in \text{Sm}/k$, is
a Nisnevich local weak equivalence.

Using Thomason's theorem~\cite{T} stating that algebraic $K$-theory
satisfies Nisnevich descent, we obtain isomorphisms
   $$K_i(X)\cong\shnis(k)(X[i],K)\cong\shnis\bb K(\bb K(-,X)[i],\bb K(-,pt)),\quad i\in\bb Z.$$

Consider a commutative diagram in $Pre^\Sigma(\text{Sm}/k)$
   \begin{equation*}\label{hop}
   \xymatrix{
    K(\cc A(-,pt))\ar[r]\ar[d]_F&K(\cc A(-,pt))_f\ar[r]\ar[d]_{\delta}&|\underline{\Hom}(\Delta^.,K(\cc A(-,pt))_f)|\ar[d]^\gamma\\
    K\ar[r]^\alpha&K_f\ar[r]^\beta&|\underline{\Hom}(\Delta^.,K_f)|}
   \end{equation*}
Here the lower $f$-symbol refers to a fibrant replacement functor in
the Nisnevich local model structure on $Pre^\Sigma(\text{Sm}/k)$.
Theorem~\ref{AandP} implies $F$ is a Nisnevich local weak
equivalence. By~\cite{T} $K(-)$ is Nisnevich excisive, and hence
$\alpha$ is a stable weak equivalence. Since $K(-)$ is homotopy
invariant, then $\beta$ is a stable weak equivalence. It follows
that $\delta,\gamma$ are stable weak equivalences. Therefore the
composition of the upper horizontal maps is a Nisnevich local weak
equivalence. Thus the canonical map
   $$\bb K(-,pt)\to M_{\bb K}(pt)$$
is a Nisnevich local weak equivalence. One has an isomorphism
   $$K_i(X)\cong\shnis\bb K(\bb K(-,X)[i],M_{\bb K}(pt)),\quad i\in\bb Z.$$
Since $\bb K(-,X)[i],M_{\bb K}(pt)$ are bounded below $\bb
K$-modules, then our theorem follows from Theorem~\ref{neploho}(2).
\end{proof}

\appendix\section{The category of big bimodules $\tilde{\cc P}(X,Y)$}

Let $X,Y$ be two schemes of finite type over the base field $k$. We
denote by $\cc P(X,Y)$ the category of coherent $\cc O_{X\times
Y}$-modules $P_{X,Y}$ such that $\supp (P_{X,Y})$ is finite over $X$
and the coherent $\cc O_X$-module $(p_X)_*(P_{X,Y})$ is locally
free. A disadvantage of the category $\cc P(X,Y)$ is that whenever
we have two maps $f:X\to X'$ and $g:X'\to X''$ then the functor
$(g\circ f)^*$ agrees with $f^*\circ g^*$ only up to a canonical
isomorphism. We want to replace $\cc P(X,Y)$ by an equivalent
additive category $\tilde{\cc P}(X,Y)$ which is functorial in both
arguments.

To this end, we use the construction which is in spirit like that of
Grayson for finitely generated projective modules~\cite{Gr} and
Friedlander--Suslin for big vector bundles~\cite{FS}. Let $X$ be a
Noetherian scheme. Consider the big Zariski site $Sch/X$ of all
schemes of finite type over $X$. We define the {\it category of big
bimodules\/} $\tilde{\cc P}(X,Y)$ as follows.

An object of $\tilde{\cc P}(X,Y)$ consists of the following data:

\begin{enumerate}
\item For any $U\in Sch/X$ one has a bimodule $P_{U,Y}\in\cc
P(U,Y)$.

\item For any morphism $f:U'\to U$ in $Sch/X$ one has a morphism
$\sigma_f:P_{U,Y}\to(f\times 1_Y)_*(P_{U',Y})$ in $\cc P(U,Y)$
satisfying:

\begin{itemize}

\item[(a)] $\sigma_1=1$.

\item[(b)] The morphism $\tau_f:(f\times 1_Y)^*(P_{U,Y})\to
P_{U',Y}$ which is adjoint to $\sigma_f$ must be an isomorphism in
$\cc P(U',Y)$.

\item[(c)] Given a chain of maps $U''\lra{f_1}U'\lra{f}U$ in $Sch/X$,
the following relation is satisfied
   $$\sigma_{f\circ f_1}=(f\times 1_Y)_*(\sigma_{f_1})\circ\sigma_f.$$
\end{itemize}
\end{enumerate}
A morphism of two big bimodules $\alpha:P\to Q$ is a morphism
$\alpha_{X,Y}:P_{X,Y}\to Q_{X,Y}$ in $\cc P(X,Y)$. Clearly, $\cc
P'(X,Y)$ is an additive category.

Given a map $g:X'\to X$ of two Noetherian schemes, we define an
additive functor
   $$g^*:\tilde{\cc P}(X,Y)\to\tilde{\cc P}(X',Y)$$
as follows. For any $U\in Sch/X'$ and $P\in\cc P(X,Y)$ one sets
$g^*(P)_{U,Y}=P_{U,Y}$, where $U$ is regarded as an object of
$Sch/X$ by means of composition with $g$. In a similar way, if
$h:U'\to U$ is a map in $Sch/X'$ then $\sigma_h:g^*(P)_{U,Y}\to
g^*(P)_{U',Y}$ equals $\sigma_h$. So we have defined $g^*$ on
objects. Let $\alpha:P\to Q$ be a morphism in $\tilde{\cc P}(X,Y)$.
By definition, it is a morphism $\alpha_{X,Y}:P_{X,Y}\to Q_{X,Y}$ in
$\cc P(X,Y)$. There is a commutative diagram
   $$\xymatrix{(g\times 1_Y)^*(P_{X,Y})\ar[d]_{(g\times 1_Y)^*(\alpha_{X,Y})}\ar[r]^(.65){\tau_g}&P_{X',Y}\ar[d]^{\alpha_{X',Y}}\\
               (g\times 1_Y)^*(Q_{X,Y})\ar[r]^(.65){\tau_g}&Q_{X',Y}}$$
where the horizontal maps are isomorphisms. Then
$g^*(\alpha):=\alpha_{X',Y}$. The functor $g^*$ is constructed.

\begin{lem}\label{derev}
Let $g_1:X''\to X'$ and $g:X'\to X$ be two maps of schemes. Then
$(g\circ g_1)^*=g_1^*\circ g^*$.
\end{lem}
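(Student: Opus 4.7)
The plan is to verify the equality $(g\circ g_1)^* = g_1^*\circ g^*$ on objects and on morphisms separately, both being direct unwindings of the definitions given just above the lemma.

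On objects, let $P\in\tilde{\cc P}(X,Y)$ and let $U\in Sch/X''$. Then both big bimodules $(g\circ g_1)^*(P)$ and $g_1^*(g^*(P))$ assign to $U$ the coherent module $P_{U,Y}$, where in both cases $U$ is regarded as an object of $Sch/X$ via the composite $g\circ g_1:X''\to X$. The same remark applies to the structure maps $\sigma_h$ attached to any $h:U'\to U$ in $Sch/X''$, which on both sides reduce to the structure map of $P$ itself (once $h$ is viewed as a morphism in $Sch/X$). Thus the two big bimodules coincide on the nose.

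On morphisms, let $\alpha:P\to Q$ be a morphism in $\tilde{\cc P}(X,Y)$, given by its single component $\alpha_{X,Y}$. Unwinding the definition, $(g\circ g_1)^*(\alpha)$ is the unique morphism $\alpha_{X'',Y}$ in $\cc P(X'',Y)$ that is compatible with $((g\circ g_1)\times 1_Y)^*(\alpha_{X,Y})$ through the isomorphism $\tau_{g\circ g_1}$. On the other hand, $g_1^*(g^*(\alpha))$ is obtained in two steps: first produce $\alpha_{X',Y}$ from $\alpha_{X,Y}$ using $\tau_g$, then produce the $X''$-level component from $\alpha_{X',Y}$ using $\tau_{g_1}$. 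Here the second $\tau_{g_1}$ is the transition isomorphism attached to the big bimodule $g^*(P)$, which by construction coincides with that of $P$.

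The one genuine ingredient to match these two constructions is the identity $\tau_{g\circ g_1} = \tau_{g_1}\circ (g_1\times 1_Y)^*(\tau_g)$ modulo the canonical identification $((g\circ g_1)\times 1_Y)^* = (g_1\times 1_Y)^*\circ (g\times 1_Y)^*$. This is precisely the adjoint form of the cocycle condition (2)(c) on the structure maps $\sigma$ that defines a big bimodule. Granting it, uniqueness of the arrow making each square commute forces the two constructions to coincide. There is no real obstacle beyond careful bookkeeping; the entire point of enriching $\cc P(X,Y)$ to $\tilde{\cc P}(X,Y)$ is to encode the cocycle relations strictly, turning what for $\cc P$ would only be a pseudo-functoriality into a literal equality of functors.
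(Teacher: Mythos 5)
Your proof is correct and is precisely the definitional unwinding that the paper itself declines to write out (its proof reads only ``This is straightforward''): the equality is immediate on objects and structure maps, and on morphisms it reduces to the adjoint form of the cocycle condition (2)(c), which you correctly identify as the one genuine ingredient.
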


\begin{proof}
This is straightforward.
\end{proof}

Now let us discuss functoriality in $Y$. For this consider a map
$h:Y\to Y'$. We construct an additive functor
   $$h_*:\tilde{\cc P}(X,Y)\to\tilde{\cc P}(X,Y')$$
in the following way. We set $h_*(P)_{U,Y'}=(1_U\times
h)_*(P_{U,Y})$ for any $P\in\tilde{\cc P}(X,Y)$. If $f:U'\to U$ is a
map in $Sch/X$ then
   $$(1_U\times h)_*(f\times 1_Y)_*=(f\times 1_{Y'})_*(1_{U'}\times h)_*.$$
We define $\sigma_f$ for $h_*(P)$ as
   $$(1_U\times h)_*(\sigma_f):(1_U\times h)_*(P_{U,Y})\to(1_U\times h)_*(f\times 1_Y)_*(P_{U',Y})
     =(f\times 1_{Y'})_*(1_{U'}\times h)_*(P_{U',Y}).$$
By definition, $h_*$ takes a morphism $\alpha_{X,Y}$ in $\cc P(X,Y)$
to $(1_X\times h)_*(\alpha_{X,Y})$. The construction of the functor
$h_*$ is completed.

\begin{lem}\label{deve}
Let $h_1:Y'\to Y''$ and $h:Y\to Y'$ be two maps of schemes. Then
$(h_1\circ h)_*=(h_1)_*\circ h_*$.
\end{lem}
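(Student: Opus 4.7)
The plan is to verify the equality of functors by unwinding the definition of $h_*$ componentwise (on the bimodule data $P_{U,Y}$ indexed by $U\in Sch/X$, on the structure morphisms $\sigma_f$, and on morphisms $\alpha_{X,Y}$) and then invoking two elementary facts: that $(1_U\times h_1)\circ(1_U\times h)=1_U\times(h_1\circ h)$ as morphisms of schemes, and that pushforward of sheaves is strictly functorial, i.e.\ $(1_U\times h_1)_*\circ(1_U\times h)_*=(1_U\times(h_1\circ h))_*$.

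First, I would check equality on the bimodule component at each $U\in Sch/X$. For $P\in\tilde{\cc P}(X,Y)$ one has, directly from the definition of $h_*$, the identity
\[
((h_1)_*(h_*P))_{U,Y''}=(1_U\times h_1)_*((h_*P)_{U,Y'})=(1_U\times h_1)_*(1_U\times h)_*(P_{U,Y}),
\]
while $((h_1\circ h)_*P)_{U,Y''}=(1_U\times(h_1\circ h))_*(P_{U,Y})$. These coincide by the two functoriality observations just recalled.

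Next, I would verify that the structure morphisms $\sigma_f$ match. For a morphism $f\colon U'\to U$ in $Sch/X$, the formula defining $\sigma_f$ for the pushforward gives
\[
\sigma_f^{(h_1)_*(h_*P)}=(1_U\times h_1)_*(\sigma_f^{h_*P})=(1_U\times h_1)_*(1_U\times h)_*(\sigma_f^{P}),
\]
which equals $\sigma_f^{(h_1\circ h)_*P}=(1_U\times(h_1\circ h))_*(\sigma_f^P)$ again by functoriality of pushforward. Finally, on a morphism $\alpha\colon P\to Q$ (represented by $\alpha_{X,Y}\colon P_{X,Y}\to Q_{X,Y}$), both sides output $(1_X\times(h_1\circ h))_*(\alpha_{X,Y})=(1_X\times h_1)_*(1_X\times h)_*(\alpha_{X,Y})$ by the same argument.

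There is no genuine obstacle: the statement is an immediate consequence of the strict functoriality of direct image for sheaves, together with the compatibility $(1_U\times(h_1\circ h))=(1_U\times h_1)\circ(1_U\times h)$. The only minor bookkeeping is to ensure that the three pieces of data making up an object of $\tilde{\cc P}(X,Y'')$ agree on the nose, which they do because each piece is obtained by applying iterated pushforwards that literally compose. This explains why, as for Lemma~\ref{derev}, the proof reduces to the single statement that it is straightforward.
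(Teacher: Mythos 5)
Your proposal is correct and is exactly the verification the paper intends: the paper's own proof is just ``This is straightforward,'' and your componentwise check of the bimodules $P_{U,Y}$, the structure maps $\sigma_f$, and the action on morphisms, using that $(1_U\times h_1)\circ(1_U\times h)=1_U\times(h_1\circ h)$ and that direct image of sheaves composes strictly, is precisely that straightforward argument spelled out.
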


\begin{proof}
This is straightforward.
\end{proof}

We leave the reader to verify the following
\begin{prop}\label{deve2}
The natural functor
   $$R: \tilde{\cc P}(X,Y)\to\cc P(X,Y),\quad P\mapsto P_{X,Y},$$
is an equivalence of additive categories.
\end{prop}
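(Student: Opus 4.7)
The plan is to check that $R$ is (i) additive, (ii) fully faithful, and (iii) essentially surjective, with the last point being where all of the content lies.

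Additivity of $R$ is obvious from the componentwise definition of a morphism. Full faithfulness is in fact tautological: by definition, a morphism $\alpha:P\to Q$ in $\tilde{\cc P}(X,Y)$ is \emph{nothing more} than a morphism $\alpha_{X,Y}:P_{X,Y}\to Q_{X,Y}$ in $\cc P(X,Y)$, and $R$ simply reads off this datum. So $R$ induces an identification on Hom-sets, and there is no compatibility with the structure maps $\sigma_f$ to verify.

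For essential surjectivity the plan is to construct a pseudo-inverse $\Xi:\cc P(X,Y)\to\tilde{\cc P}(X,Y)$ as follows. Fix once and for all a specific pullback functor on quasi-coherent sheaves (say the usual $f^*\cc F=f^{-1}\cc F\otimes_{f^{-1}\cc O}\cc O'$). Given $M\in\cc P(X,Y)$ and an object $f_U:U\to X$ of $Sch/X$, set $\Xi(M)_{U,Y}:=(f_U\times 1_Y)^*(M)$, with the convention that $\Xi(M)_{X,Y}=M$ when $f_U=\id_X$. First one checks that this is indeed an object of $\cc P(U,Y)$: finiteness of support and local freeness of $(p_U)_*$ are preserved under pullback along $U\times Y\to X\times Y$ by base change along the Cartesian square built from $U\to X$ and $p_X$. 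For a morphism $g:U'\to U$ in $Sch/X$, the canonical comparison isomorphism between $(f_Ug\times 1_Y)^*(M)$ and $(g\times 1_Y)^*(f_U\times 1_Y)^*(M)$ provides an isomorphism $\tau_g$ between $\Xi(M)_{U',Y}$ and $(g\times 1_Y)^*\Xi(M)_{U,Y}$; take $\sigma_g$ to be its adjoint. Conditions (a) and (b) in the definition of $\tilde{\cc P}(X,Y)$ are then immediate. Finally, on morphisms $\Xi$ sends $\beta:M\to N$ to the morphism of big bimodules determined by $\beta$ itself at the $(X,Y)$-level; no further data is required by the definition.

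The only nontrivial point is the strict cocycle condition (c), and this is the step I expect to be the main obstacle. Concretely, for $U''\xrightarrow{f_1}U'\xrightarrow{f}U$ in $Sch/X$ one needs
\[
\sigma_{f\circ f_1}=(f\times 1_Y)_*(\sigma_{f_1})\circ\sigma_f
\]
strictly, which after passing to adjoints amounts to the compatibility $\tau_{f\circ f_1}=\tau_{f_1}\circ (f_1\times 1_Y)^*(\tau_f)$ of the canonical pullback-composition isomorphisms. This is the standard coherence (associativity) of the canonical isomorphisms $(gh)^*\cong h^*g^*$, which holds for any choice of pullback functor once the $\tau$'s are taken to be the canonical ones; it requires no ad hoc strictification. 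Once $\Xi$ is seen to be a well-defined functor, one has $R\circ\Xi=\id$ strictly, and $\Xi\circ R\simeq\id$ follows because condition (2)(b) forces every big bimodule $P$ to be canonically isomorphic (via the $\tau_{f_U}$) to $\Xi(P_{X,Y})$. Thus $R$ is an additive equivalence.
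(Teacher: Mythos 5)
Your proof is correct. The paper offers no argument at all for this proposition (it is prefaced by ``We leave the reader to verify the following''), and what you supply is exactly the intended Grayson-style verification: full faithfulness is tautological because a morphism of big bimodules is by definition nothing but its $(X,Y)$-component, and essential surjectivity follows from your pseudo-inverse $\Xi(M)_{U,Y}=(f_U\times 1_Y)^*(M)$, whose strict cocycle condition (c) reduces --- after inserting the comparison isomorphism $((f\circ f_1)\times 1_Y)^*\cong(f_1\times 1_Y)^*(f\times 1_Y)^*$, which your displayed adjoint identity tacitly suppresses but which you clearly have in mind --- to the standard coherence of the canonical pullback-composition isomorphisms; the only point worth making explicit is that $\Xi(M)_{U,Y}$ lands in $\cc P(U,Y)$ because $\supp(M)$ is finite, hence affine, over $X$, so $(p_X)_*$ restricted to it commutes with arbitrary base change and $(p_U)_*\Xi(M)_{U,Y}\cong f_U^*(p_X)_*M$ is locally free.
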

By
Lemmas~\ref{derev}-\ref{deve} $\tilde{\cc P}(X,Y)$ has the desired
functoriality properties in both arguments.

\end{document}